\newtheorem{theorem}{Theorem}[section]
\newtheorem{lemma}[theorem]{Lemma}
\newtheorem{proposition}[theorem]{Proposition}
\newtheorem{corollary}[theorem]{Corollary}
\newtheorem{remark}[theorem]{Remark}
\newtheorem{definition}[theorem]{Definition}
\newenvironment{proof}{\paragraph{Proof.}}{\hfill$\blacksquare$}
\def\lD{{\cal D}}
\def\lM{{\cal M}}
\def\lR{{\cal R}} 
\def\lS{{\cal S}} 
\def\lT{{\cal T}}
\def\mA{{\bf A}} 
\def\mB{{\bf B}}
\def\mj{{\bf j}}
\def\mq{{\bf q}}
\def\mv{{\bf v}}
\def\vg{{\sf g}}
\mathchardef\kvtpr="2D02
\mathchardef\kvadratic="0D03
\mathchardef\punikv="0D04
\mathchardef\rnsmjer="0D08
\mathchardef\rpsmjer="0D09
\mathchardef\jkon="3D13
\mathchardef\preth="3D34
\mathchardef\mj="3D36
\mathchardef\mave="3D37
\mathchardef\sljed="3D3C
\mathchardef\vj="3D3E
\mathchardef\vema="3D3F
\mathchardef\trokutic="0D4D
\mathchardef\punitrok="0D4E
\mathchardef\cul="3D62
\mathchardef\ksd="3D63
\mathchardef\mmmanje="3D6E
\mathchardef\vvvece="3D6F
\mathchardef\lgkut="0D70
\mathchardef\dgkut="0D71
\mathchardef\dirsuma="2D75
\mathchardef\ldkut="0D78
\mathchardef\ddkut="0D79
\mathchardef\ominus="0D7F
\mathchardef\hkprec="0E7D
\mathchardef\hrprec="0E7E
\let\N=\mN
\let\R=\mR
\def\Nnul{{\mathbb N}_0}
\def\Rd{{{\mathbb R}^{d}}}
\let\cul=\kul
\def\cl{{\sf Cl\thinspace}}
\def\Re{{\sf Re}\,} 
\def\dom{{\sf dom\thinspace}}
\def\LL#1{{{\rm L}}}
\def\Lb#1{{{\rm L}^\infty(#1)}}
\def\Ld#1{{{\rm L}^{2}(#1)}}
\def\Lb#1{{{\rm L}^{\infty}(#1)}}
\def\supp{{\rm supp\,}}
\def\str{\longrightarrow}
\chardef\sS="19
\def\putover#1{\mathop{\vbox{\ialign{##\crcr\noalign{\kern0pt}
             $\hfil\displaystyle{#1}\hfil$\crcr}}}\limits}    
\def\HmsO{{{\rm H}^{-s}(\Omega)}}
\DeclareMathOperator{\divs}{{\rm div}^s}
\DeclareMathOperator{\dive}{{\rm div}}
\DeclareMathOperator{\grads}{\nabla^s\hspace{-0.05cm}}
\DeclareMathOperator{\Ds}{{\rm D}^s}
\DeclareMathOperator{\DD}{{\rm D}}
\DeclareMathOperator{\ran}{ran}
\def\Mab{\lM(\alpha,\beta;\Omega)}
\def\MabR{\lM(\alpha,\beta;\Rd)}
\def\MabU{\lM(\alpha,\beta;U)}
\def\MabRc{\lM(\alpha,\beta;\Rd\setminus\cl\Omega)}
\def\MabA{\lM(\alpha,\beta;\Omega, \mA_0)}
\def\HnsO{{\rm H}_0^s(\Omega)}
\def \lMsymR{\lM^{\textnormal{sym}}(\alpha,\beta;\Rd)}
\def \lMsymU{\lM^{\textnormal{sym}}(\alpha,\beta;U)}
\author{Andreas Buchinger\thanks{TU Hamburg, Germany, e-mail: andreas.buchinger@tuhh.de}\;\orcidlink{0009-0004-4203-5874}, Krešimir Burazin\thanks{School of Applied Mathematics and Informatics, University of Osijek, Croatia, e-mail: kburazin@mathos.hr}\;\orcidlink{0000-0001-6713-7560}, Ivana Crnjac\thanks{School of Applied Mathematics and Informatics, University of Osijek, Croatia, e-mail: icrnjac@mathos.hr}\;\orcidlink{0000-0002-0435-7184}, Marko Erceg\thanks{Department of Mathematics, Faculty of Science, University of Zagreb, Croatia, e-mail: maerceg@math.hr}\;\orcidlink{0000-0002-4077-9253},\\
Maja Jolić\thanks{School of Applied Mathematics and Informatics, University of Osijek, Croatia, e-mail: mjolic@mathos.hr}\;\orcidlink{0000-0002-9633-3031}, Marcus Waurick\thanks{TU Bergakademie Freiberg, Germany, e-mail: marcus.waurick@math.tu-freiberg.de}\;\orcidlink{0000-0003-4498-3574}$\,\,$
}
\title{Characterisation of homogenisation for nonlocal diffusion by local topologies
}
\date{\today}
\begin{document}

\maketitle

\begin{abstract}
We consider fractional variants of divergence form problems with highly oscillatory local coefficients. We characterise the convergence of these coefficients by means of classical $H$-convergence covering the local behaviour of the fractional divergence form problem and weak-$\ast$ convergence on the complement caused by the nonlocality of the differential operators. The results are further described in the light of nonlocal $H$-convergence as introduced in [Waurick, Calc Var PDEs, 57, 2018] and certain Schur topologies. Applications to symmetric coefficients and a homogenisation problem for a fractional heat type equation are provided.

\end{abstract}

\noindent\textbf{MSC:} Primary: 35B27, 35R11, Secondary: 35B40, 74S40

\noindent \textbf{Keywords:} nonlocal $H$-convergence, $H$-convergence, $H^s$-convergence, $G$-convergence, $G^s$-convergence, fractional derivatives, divergence form problems, homogenisation, Schur topology, Riesz potential

\section{Introduction}\label{sec:intro}

In the modelling of phenomena in mathematical physics it has been apparent that nonlocal equations, that is, equations that take into account the behaviour of particles in a certain region and not only at a single point, provide more accurate descriptions in various cases. A prominent example in electromagnetism is the so-called nonlocal response theory; see \cite[Chapter 10]{K11}. 
In this theory, the material coefficients are represented by convolution kernels, whereas the associated differential operators remain classical.
In contrast, nonlocal equations can also be formulated using fractional differential operators, such as the fractional Laplacian, which replace classical derivatives with nonlocal ones. For a probabilistic interpretation, applications to a wide range of physical systems, and the numerical treatment of such operators, we refer to \cite{Bucur16, DGV24}. Furthermore, fractional derivative terms have been shown to improve predictions for certain physical phenomena; specifically, models involving fractional derivatives often outperform traditional ans\"atze by requiring fewer terms to achieve greater accuracy \cite{B08}.

While several fractional derivatives are commonly used in the one-dimensional setting and the theory is well-standardised \cite{Mainardi, SKM93}, the situation in higher dimensions is significantly more diverse (see \cite{Silh19} and the references therein). Recently, however, the Riesz fractional gradient was rigorously established in \cite{ShSp, Silh19}. By identifying it as a canonical higher-dimensional generalisation, this work has sparked significant interest in the framework, as seen in \cite{BCMC20, CCM, CSjfa19, KS, Silh22}. Our contribution follows this approach.

In nonlocal equations modelling physical phenomena the coefficients  might be highly oscillatory in order to take material variations appropriately into account. In these cases, compared to classical equations, computations are more involved not only due to the nonlocal nature of the equations but also due to the variations of the coefficients on a smaller scale. Hence, the set-up asks for what is called homogenisation, i.e., to find a somewhat optimal approximation of highly oscillatory coefficients by non-oscillatory ones. 
The majority of the literature addresses the homogenization of L\'evy-type operators, where the oscillations are contained within a scalar kernel describing the jumps of the underlying processes (see, e.g., \cite{AM15, BE21, BRS17, KPZ19, SVW21}). The techniques employed range from probabilistic methods to those tailored for partial differential equations. Notably, the work \cite{BRS17} (see also \cite{BE21}) represents the first attempt to adapt the notion of $H$-convergence to nonlocal problems, which is the central theme of our contribution.
In 2018, a more comprehensive theory for local differential operators and nonlocal coefficients has been found which operator-theoretically captures this quest of finding non-oscillatory coefficients. Indeed, in \cite{Wau18}, the notion of nonlocal $H$-convergence has been coined, which, if restricted to local coefficients, coincides with classical homogenisation approaches that are in turn covered by $H$-convergence. It is the purpose of the present text to study homogenisation problems for nonlocal differential equations with local coefficients and fractional derivative operators, building upon the theory introduced in \cite{CCM}.
In this overview, we start by recalling the notion of classical local $H$-convergence.

To ensure a clear presentation of the upcoming content, we fix the following \textbf{default assumptions} and notation, which remain in effect throughout the paper unless specified otherwise.
\begin{itemize}
    \item \textbf{Parameters.} $d\geq 1$ (the dimension of the underlying space $\R^d$), $s\in \langle 0,1\rangle $ (i.e., $0<s<1$, the order of the fractional differential operators), $0<\alpha\leq \beta$ (bounds for the coefficients). 
    \item \textbf{Domain $\Omega$.} In sections 1--4, we require that $\Omega\subseteq\R^d$ is open and bounded. 
    In sections 5--8, we work under the assumption that $\Omega\subseteq \R^d$ is open and bounded, and that the Lebesgue measure of its boundary is equal to zero. 
\end{itemize}

Classically, non-periodic local homogenisation problems employ the notion of $H$-conver\-gence, which dates back to the works of Murat and Tartar in the 1970s; standard references for this topic include \cite{Allaire, Tartar}. To begin, for an open set $U\subseteq\mathbb{R}^d$, we define the set of admissible matrix-valued coefficients:
\[
\begin{aligned}
\MabU\coloneqq \Bigl\{\mA \in L^\infty(U;\R^{d\times d}) : \; & \mA(x)\xi\cdot\xi \geq \alpha|\xi|^2 , \\ 
& \mA^{-1}(x)\xi\cdot\xi \geq \frac{1}{\beta}|\xi|^2,\xi\in\Rd,\text{ a.e. }x\in U \Bigr\} \,.
\end{aligned}
\]
\begin{definition}\label{def:Hc} Let $(\mA_n)_n$, $\mA$ in $\Mab$. Then $(\mA_n)_n$ is said to \textbf{$H$-converge} to $\mA$ ($\mA_n \stackrel{H}{\longrightarrow} \mA$), if for all $f\in {\rm H}^{-1}(\Omega)$ and $u_n \in {\rm H}_0^1(\Omega)$ such that
\[
    -\dive (\mA_n\nabla u_n) = f,
\]
we obtain for $u\in {\rm H}^1_0(\Omega)$, satisfying
\[
  -\dive (\mA\nabla u) = f,
\]
$u_n \xrightharpoonup{ {\rm H}^1_0(\Omega)} u$ and $\mA_n \nabla u_n\xrightharpoonup{ \Ld\Omega^d} \mA \nabla u$.
\end{definition}
The main properties of $H$-convergence are compactness and metrisability (see Theorem \ref{thmLocalHisMetrAndComp}), which illustrate why it is often considered the natural topology on coefficients for these problems.

At first glance, the definition above appears slightly more restrictive than the original one, as we require the parameters $\alpha$ and $\beta$ to be the same for both the sequence and the limit (cf.~\cite[Definition 6.4]{Tartar}). However, due to the compactness of the set of admissible coefficients with respect to $H$-convergence and the uniqueness of the $H$-limit, one can impose such a constraint without loss of generality. This approach is consistent with \cite[Definition 1.2.15]{Allaire}. Similar considerations apply to $G$-convergence (see Section \ref{sec:Gconv}) and to Definition \ref{def:Hsc} (when compared to \cite{CCM}).

In \cite{CCM}, homogenisation problems with local coefficients but nonlocal, fractional, differential operators have been considered. Also for this set-up, the term `nonlocal $H$-convergence' was used. This time, however, in order to describe convergence of sequences of highly oscillatory local coefficients---the nonlocality relates rather to the problems than to the coefficients. In order to distinguish these two apparently different notions of nonlocal $H$-convergence, for the latter, we shall use $H^s$-convergence instead. The definition is as follows. For brevity, we will defer commentary on the unique existence of solutions for the upcoming equations, as well as the precise definition of the operators and spaces, to Section \ref{sec:prelim}.
\begin{definition}\label{def:Hsc} Let $(\mA_n)_n$, $\mA$ in $\MabR$. Then $(\mA_n)_n$ is said to \textbf{$H^s$-converge} to $\mA$ ($\mA_n \stackrel{H^s}{\longrightarrow} \mA$), if for all $f\in {\rm H}^{-s}(\Omega)$ and $u_n \in {\rm H}_0^s(\Omega)$ such that
\[
    -\divs (\mA_n\grads u_n) = f,
\]we obtain for $u\in {\rm H}_0^s(\Omega)$, satisfying
\[
  -\divs (\mA\grads u) = f,
\]
$u_n \xrightharpoonup{{\rm H}_0^s(\Omega)} u$ and $\mA_n \grads u_n\xrightharpoonup{{\rm L}^2(\R^d)^d} \mA \grads u$.
\end{definition} Note that due to the nonlocality of the fractional differential operators, the matrix function sequences need to be defined on the whole of $\R^d$ instead of $\Omega$, only. In consequence, the fluxes, $\mA_n \grads u_n$ weakly converge in ${\rm L}^2(\R^d)^d$ instead of ${\rm L}^2(\Omega)^d$ in the local case.

Since the coefficients are defined on $\Rd$, the role of the domain $\Omega$ is implicit when we state that $(\mA_n)_n$ $H^s$-converges to $\mA$. When we want to emphasise the dependence on $\Omega$, we shall write that $(\mA_n)_n$ $H^s$-converges to $\mA$ in $\Omega$. Unless otherwise specified, all such convergences in this paper are understood to be with respect to this fixed domain $\Omega$.

In \cite{CCM}, their main arguments relate $H^s$-convergence to classical $H$-convergence and, for symmetric coefficients, to $\Gamma$-convergence. It turns out that the main difference between $H^s$-convergence and $H$-convergence is that even if considered on an open bounded set $\Omega$, $H^s$-convergence appears to also have certain implications on the coefficient sequence on the complement of $\Omega$. Anticipating this fact, the authors of \cite{CCM} focused on sequences that are fixed outside $\Omega$, which led to the introduction of
\[
   \MabA\coloneqq \{\mA \in \MabR: \mA|_{\R^d\setminus\Omega} = \mA_0|_{\R^d\setminus\Omega} \}
\]
 for some $\mA_0\in   \MabR$.
Then, even though not explicitly stated, the arguments presented in \cite[Thm. 3.1 and Lem. 3.3]{CCM} suffice to obtain the following (at least for $d\geq 2$): For any  $(\mA_n)_n$, $\mA$ in $ \MabA$, one has
\begin{equation}\label{th:ccm1}\tag{$\ast$}
 \mA_n   \stackrel{H^s}{\longrightarrow}  \mA
\iff  \mA_n|_\Omega \stackrel{H}{\longrightarrow} \mA|_\Omega \,.
\end{equation}
In particular, by sequential compactness of $H$-convergence, as corollary, the authors of \cite{CCM} obtain sequential compactness of $H^s$-convergence. Indeed, in their proof of \cite[Theorem 3.1]{CCM}, they show that $H$-convergence yields $H^s$-convergence and then later in Lemma 3.3 that the $H^s$-limit is unique. This is sufficient to obtain \eqref{th:ccm1} (we refer to Section \ref{sec:MabA} for the precise argument).

In the present paper, under rather mild conditions on $\Omega$, we shall fully characterise $H^s$-convergence without further restrictions on the coefficient sequence other than standard a priori bounds. In fact, we will provide a proof of the following statement.
\begin{theorem}\label{thm:intro-Hs-MabR}
Let $\Omega$ have a boundary of measure zero, $d\geq 1$, $s\in \langle 0,1\rangle$. Let $(\mA_n)_n$, $\mA$ in $ \MabR$. Then
\begin{equation}\label{H^s-conv-gen}
 \mA_n   \stackrel{H^s}{\longrightarrow}  \mA
\iff  \mA_n|_\Omega \stackrel{H}{\longrightarrow} \mA|_\Omega \ \text{ and } \
\mA_n|_{\R^d\setminus \cl\Omega}
\xrightharpoonup{\ * \ }
\mA|_{\R^d\setminus \cl\Omega} \,, 
\end{equation}
where $\xrightharpoonup{*}$ denotes the weak-* convergence in ${\rm L}^\infty(\Rd\setminus\cl\Omega;\R^{d\times d})$.
\end{theorem}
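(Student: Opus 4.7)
The plan is to prove the backward direction $(\Leftarrow)$ first, establish uniqueness of the $H^s$-limit, and then derive the forward direction $(\Rightarrow)$ via compactness combined with uniqueness.

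For $(\Leftarrow)$, I would introduce the auxiliary sequence $\tilde\mA_n := \chi_\Omega \mA_n + \chi_{\R^d \setminus \Omega} \mA$, which belongs to $\MabR$ and coincides with $\mA$ outside $\Omega$, i.e., it falls into the class of coefficients with fixed exterior treated in \cite{CCM}. Since $\tilde\mA_n|_\Omega = \mA_n|_\Omega \stackrel{H}{\longrightarrow} \mA|_\Omega$, \eqref{th:ccm1} gives $\tilde\mA_n \stackrel{H^s}{\longrightarrow} \mA$. For fixed $f \in {\rm H}^{-s}(\Omega)$, let $u_n, \tilde u_n \in {\rm H}^s_0(\Omega)$ denote the associated solutions, and extract weak limits $u_n \rightharpoonup u^\ast$ in ${\rm H}^s_0(\Omega)$ and $\mA_n \nabla^s u_n \rightharpoonup \xi^\ast$ in $L^2(\R^d)^d$ along a subsequence. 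Subtracting the two weak formulations, and using $\mA_n = \tilde\mA_n$ on $\Omega$, one obtains
\[
\int_{\R^d} \mA_n \nabla^s w_n \cdot \nabla^s \varphi = -\int_{\R^d \setminus \Omega} (\mA_n - \mA) \nabla^s \tilde u_n \cdot \nabla^s \varphi, \qquad \varphi \in {\rm H}^s_0(\Omega),
\]
where $w_n := u_n - \tilde u_n$. The smoothness of the Riesz kernel away from $\cl\Omega$ makes the map $u \mapsto (\nabla^s u)|_K$ compact from $L^2(\Omega)$ to $L^2(K)^d$ for every compact $K \subseteq \R^d \setminus \cl\Omega$; together with Rellich compactness ${\rm H}^s_0(\Omega) \hookrightarrow L^2(\Omega)$ this yields strong convergence $\nabla^s \tilde u_n \to \nabla^s u$ in $L^2_{\rm loc}(\R^d \setminus \cl\Omega)^d$, and combined with the weak-$\ast$ convergence of $\mA_n - \mA$ in $L^\infty$ plus absolute continuity of $\|\nabla^s\varphi\|_{L^2}$ across a shrinking layer around $\partial\Omega$, one shows the right-hand side vanishes for every fixed $\varphi$. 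The same weak-$\ast$/strong-on-compacts combination identifies $\xi^\ast = \mA \nabla^s u^\ast$ on $\R^d \setminus \cl\Omega$.

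The main obstacle is identifying $\xi^\ast|_\Omega = \mA \nabla^s u^\ast|_\Omega$, which requires bridging classical $H$-convergence (a statement about classical gradients) with the fractional flux. The natural route is via the Riesz-potential lift $V_n := I_{1-s} u_n \in {\rm H}^1(\R^d)$, which satisfies $\nabla V_n = \nabla^s u_n$ pointwise, coupled with the global identity $\divs = I_{1-s}\dive$: this recasts $(\mA_n \nabla^s u_n)|_\Omega = (\mA_n \nabla V_n)|_\Omega$, with $V_n|_\Omega \rightharpoonup V|_\Omega$ in ${\rm H}^1(\Omega)$, and the cancellation in the equation for $w_n$ provides control on the relevant classical divergences so that the compensated-compactness formulation of $H$-convergence on $\Omega$ applies, giving $\xi^\ast|_\Omega = \mA \nabla^s u^\ast|_\Omega$; passing to the limit in the equation for $u_n$ then forces $u^\ast = u$. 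Uniqueness of the $H^s$-limit follows from a density argument: if $\mA_n$ admits two $H^s$-limits $\mA, \mA'$, then $(\mA - \mA')\nabla^s u = 0$ in $L^2(\R^d)^d$ for every $u \in {\rm H}^s_0(\Omega)$ obtained by choosing $f := -\divs(\mA \nabla^s u)$, and a countable-dense family of bumps in $C_c^\infty(\Omega)$ produces fractional gradients spanning $\R^d$ at almost every point, forcing $\mA = \mA'$ a.e. Finally, $(\Rightarrow)$ follows by compactness: extract subsequences so that $\mA_n \xrightharpoonup{\ast} \tilde\mA_1$ on $\R^d \setminus \cl\Omega$ (Banach--Alaoglu) and $\mA_n|_\Omega \stackrel{H}{\longrightarrow} \tilde\mA_2$ (compactness of classical $H$-convergence); the backward direction applied to the glued limit $\hat\mA$ gives $\mA_n \stackrel{H^s}{\longrightarrow} \hat\mA$, and by uniqueness $\hat\mA = \mA$, so the whole sequence converges to $\mA$ in both senses.
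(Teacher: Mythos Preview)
Your overall architecture---prove $(\Leftarrow)$, establish uniqueness, then get $(\Rightarrow)$ by compactness plus uniqueness---is exactly the paper's strategy, and the key ingredients coincide: the Riesz-potential lift connecting $\nabla^s$ to $\nabla$ (the interior argument from \cite[Theorem~3.1]{CCM}), the compactness of $u\mapsto\nabla^s u|_{\Omega_1^c}$ from $\HnsO$ (Theorem~\ref{KS-Lema:2.12}, taken from \cite{KS}) for the exterior, and a spanning argument for uniqueness.

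Two points where your write-up diverges from or glosses over the paper. First, the auxiliary sequence $\tilde\mA_n$ and the $w_n$-equation are an unnecessary detour: the paper simply tests $\vg_n:=\mA_n\nabla^s u_n-\mA\nabla^s u$ against $\varphi_1+\varphi_2$ with $\varphi_1\in{\rm C}^\infty_c(\Omega)$ and $\varphi_2\in{\rm C}^\infty_c((\cl\Omega)^c)$, and for the $\varphi_1$-part invokes the \cite{CCM} argument verbatim (only $\mA_n|_\Omega$ enters there, so the exterior behaviour is irrelevant). Your phrase ``the cancellation in the equation for $w_n$ provides control on the relevant classical divergences'' is vague and does not obviously deliver the $H^{-1}_{\rm loc}(\Omega)$-compactness needed for the Murat--Tartar mechanism; in practice you are still relying on the \cite{CCM} interior analysis, so the comparison with $\tilde u_n$ buys nothing.

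Second, and more substantive: in your uniqueness step, the claim that fractional gradients $\{\nabla^s\varphi(x^0):\varphi\in{\rm C}^\infty_c(\Omega)\}$ span $\R^d$ must hold at a.e.\ $x^0\in\R^d$, in particular at points of $(\cl\Omega)^c$. This is precisely the paper's Lemma~\ref{lem:grads=lambda}, which extends \cite[Lemma~4.3]{KS} (where only $x^0\in\Omega$ is treated) by an explicit construction. This extension is one of the paper's technical contributions and is not a triviality; your ``countable-dense family of bumps'' phrasing hides this. Finally, a minor correction: $I_{1-s}u_n\in{\rm H}^1(\R^d)$ can fail for $d=1$, $s\le\tfrac12$; only ${\rm H}^1_{\rm loc}(\R^d)$ is available (Theorem~\ref{thm:properties_d=1}(a)), which is enough for the interior argument.
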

As a consequence, the latter theorem also provides a compactness statement for sequences in $\MabR$ (see also Section \ref{sec:MabR} below). The strategy of the proof is similar as before. At first, one shows that the convergence on the right yields $H^s$-convergence (for this we also invoke a compact embedding statement from \cite{KS}; see Theorem \ref{KS-Lema:2.12}) and then one shows uniqueness of the limit. 


Moreover, we shall draw the connection to nonlocal $H$-convergence as introduced in \cite{Wau18} and further studied in \cite{Wau25}. Among other things, the results in \cite{Wau25} provide an abstract perspective to nonlocal $H$-convergence in that this convergence is identified as a particular Schur topology. Here, the Schur topology, $\tau(\mathcal{H}_0,\mathcal{H}_0^\bot)$, is a topology on a subset of bounded linear operators, $L(\mathcal{H})$, on a Hilbert space $\mathcal{H}$ that can be parametrised by closed subspaces $\mathcal{H}_0$ of $\mathcal{H}$. We refer to Section \ref{sec:schur} for the details. We will show that there are (at least) two different choices for Schur topologies on the coefficient sequence yielding the same metric space induced by $H^s$-convergence. As an application of the latter also taking advantage of the main result in \cite{BSW24} (see also \cite{BFSW25}), we will fully describe a homogenisation problem for a heat type equation with nonlocal differential operators; which is provided next. We note in passing that this answers the question in \cite[Section 6, (3)]{CCM}.  We will postpone well-posedness issues to the corresponding section\footnote{Particularly note that the mentioned operator $\divs$ in the theorem about the heat equation is the ${\rm L}^2$-adjoint of $\grads $.} and denote by $\partial_t$ the weak derivative on ${\rm L}^2(0,T)$ for some fixed $T>0$ with maximal domain---we will reuse its name for the canonical extension to the vector-valued case.
\begin{theorem}\label{thm:intro-evolution-Hs}
Let $\Omega$ have a boundary of measure zero, $d\geq 1$, $s\in \langle 0,1\rangle$. Let $(\mA_n)_n$, $\mA$ in $\MabR$ and assume 
$ \mA_n   \stackrel{H^s}{\longrightarrow}  \mA$; let $f\in {\rm L}^2(0,T; {\rm L}^2(\Omega))$.
Consider the solutions $u_n \colon [0,T\rangle \times \Omega \to \R$ of
\begin{equation}\label{eq:ExampleFracHeatEqAn}
    \partial_t u_n -\divs \mA_n \grads u_n = f,\quad u_n(0)=0.
\end{equation}
Then, $u_n \to u$ in ${\rm L}^2(0,T;{\rm L}^2(\Omega))$, where $u$ satisfies
\begin{equation}\label{eq:ExampleFracHeatEqLimit}
         \partial_t u -\divs \mA \grads u = f,\quad u(0)=0.
\end{equation}
\end{theorem}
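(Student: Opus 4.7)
The plan is to cast \eqref{eq:ExampleFracHeatEqAn} as an abstract Cauchy problem and then invoke the evolutionary homogenisation result from \cite{BSW24}, for which the hypothesis $\mA_n \stackrel{H^s}{\longrightarrow} \mA$, reinterpreted through the Schur-topological characterisation developed in the preceding sections, is exactly what is needed. Concretely, I would regard $A_n \coloneqq -\divs \mA_n \grads$ as a bounded, coercive operator from $\HnsO$ to $\mathrm{H}^{-s}(\Omega)$, with bounds uniform in $n$ on account of $\mA_n \in \MabR$. Standard parabolic theory (Lions' variational method) then yields a unique solution $u_n \in \mathrm{L}^2(0,T;\HnsO) \cap \mathrm{C}([0,T];\Ld{\Omega})$ with $\partial_t u_n \in \mathrm{L}^2(0,T;\mathrm{H}^{-s}(\Omega))$, and similarly for the limit equation \eqref{eq:ExampleFracHeatEqLimit}.

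The next step is to extract a priori bounds. Testing \eqref{eq:ExampleFracHeatEqAn} against $u_n$ and using $\mA_n \xi\cdot\xi \geq \alpha|\xi|^2$ together with Gr\"onwall's inequality yields a uniform bound for $u_n$ in $\mathrm{L}^\infty(0,T;\Ld{\Omega}) \cap \mathrm{L}^2(0,T;\HnsO)$; reading off $\partial_t u_n$ from the equation then bounds it uniformly in $\mathrm{L}^2(0,T;\mathrm{H}^{-s}(\Omega))$. Since $\Omega$ is bounded and $s>0$, the embedding $\HnsO \hookrightarrow \Ld{\Omega}$ is compact (cf.\ Theorem \ref{KS-Lema:2.12}), so Aubin--Lions produces a subsequence, not relabelled, converging strongly in $\mathrm{L}^2(0,T;\Ld{\Omega})$ to some $\tilde u$ and weakly in $\mathrm{L}^2(0,T;\HnsO)$, while $\mA_n \grads u_n$ converges weakly in $\mathrm{L}^2(0,T;\Ld{\Rd}^d)$ to some $\mg$.

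The decisive step is identifying $\mg = \mA \grads \tilde u$, for once this is established the passage to the weak limit in \eqref{eq:ExampleFracHeatEqAn} shows that $\tilde u$ solves \eqref{eq:ExampleFracHeatEqLimit}, and uniqueness for the limit problem upgrades the subsequential convergence to convergence of the whole sequence. This is precisely where the Schur-topological reformulation of $H^s$-convergence developed in the preceding sections is indispensable: stationary $H^s$-convergence cannot be applied pointwise in $t$ because $u_n(t)$ is not controlled slice-wise, but the main theorem of \cite{BSW24} (see also \cite{BFSW25}) states that convergence of multiplicative coefficients in the relevant Schur topology forces the associated flux--gradient pairs to pass correctly to the limit in the time-parametrised setting. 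Invoking this abstract theorem with the Schur topology identified with $H^s$-convergence closes the argument.

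The principal obstacle is this last identification. Both the Aubin--Lions compactness and the stationary statement \eqref{H^s-conv-gen} are individually insufficient; Aubin--Lions gives strong convergence of $u_n$ but only weak convergence of $\mA_n \grads u_n$, and the stationary $H^s$-convergence is an elliptic statement that a priori says nothing about time-dependent coefficients of the form $(t,x) \mapsto \mA_n(x)$ acting on time-dependent $u_n(t,\cdot)$. The Schur-topological viewpoint is precisely the bridge: it lifts the elliptic homogenisation statement to the evolutionary level required here, thereby reducing the theorem to the verification that the hypotheses of \cite{BSW24} are met by the present setting.
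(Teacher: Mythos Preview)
Your proposal is correct and follows essentially the same strategy as the paper: identify $H^s$-convergence with the appropriate Schur topology (Theorem~\ref{thm:NonlocalTopologieFracGradPlusOrtho}), invoke the abstract evolutionary homogenisation result of \cite{BSW24}/\cite{BFSW25}, and upgrade to strong convergence via Aubin--Lions together with the fractional Rellich--Kondra\v{s}ov embedding. The differences are packaging rather than substance. The paper works in Picard's evolutionary-equations framework (\cite{SeTrWa22}), rewriting the heat equation as a first-order $2\times 2$ system in $(u,v)$ on weighted spaces ${\rm L}^2_\nu(\R;\cdot)$, and obtains the uniform a~priori bounds from the maximal-regularity theorem there; it then applies \cite[Thm.~6.5]{BSW24} and \cite[Thm.~4.1]{BFSW25} to obtain directly $u_n\rightharpoonup u$ weakly, after which Aubin--Lions gives strong convergence. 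You instead use Lions' variational method on the scalar equation, energy estimates via Gr\"onwall, and frame the abstract theorem as a flux-identification step $\mg=\mA\grads \tilde u$. Either order works, and the Lions route is arguably more self-contained, while the Picard route plugs straight into the hypotheses of \cite{BSW24} as stated.

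One small correction: the compact embedding $\HnsO\hookrightarrow\Ld\Omega$ you need for Aubin--Lions is Theorem~\ref{thm:properties_d=1}\,(c), not Theorem~\ref{KS-Lema:2.12}; the latter concerns compactness of $u\mapsto\grads u|_{\Omega_1^c}$ on the complement and plays no role here.
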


Using our insights for the nonsymmetric case, we will also characterise $H^s$-convergence for symmetric coefficients, introducing (the almost self-explanatory) $G^s$-convergence. We gathered our contributions and their comparison to known results in the present manuscript in Figure \ref{fig:intro}, thus completing and extending the picture in \cite{CCM}.

\begin{figure}[ht!]
    \centering
    \begin{tikzpicture}[
        block/.style={
            rectangle, 
            draw, 
            rounded corners=5pt, 
            minimum width=2.5cm, 
            minimum height=1.2cm,
            align=center,
            fill=white
        },
        arrow/.style={
            <->, 
            thick, 
            >=stealth
        },
    ]
    	
    	\coordinate (A) at (-3,-5.5);
    	\coordinate (B) at (10.5,2.5);
    	\draw[rounded corners=20pt, fill=gray!5] (A) rectangle (B);
		
		\node[xshift=-3cm, yshift=-0.8cm, align=center] at (B) {nonlocal $H$-convergence \cite{Wau25}\\ (Thm.~\ref{thm:NonlocalTopologieFracGradPlusOrtho})};

		\coordinate (C) at ([shift={(0.5cm, 0.5cm)}]A);
		\coordinate (D) at ([shift={(-0.5cm, -1.5cm)}]B);
		\coordinate (E) at ([shift={(0.5cm, 0.5cm)}]C);
		\coordinate (F) at ([shift={(-0.5cm, -3cm)}]D);
		
		\draw[dashed, fill=gray!15, rounded corners=10pt] (C) rectangle (D);

		\draw[dotted, thick, fill=gray!30, rounded corners=10pt] (E) rectangle (F);

        \node at (0,-0.5) [block, label={above:$\MabR$}] (1-1) {$H^s$-convergence\\ (Def.~\ref{def:Hsc})};
        \node[block, right=2cm of 1-1] (1-2) {$H$-convergence on $\Omega$\\ +\\ weak-$\ast$ convergence in $\mathbb{R}^d\setminus\cl{\Omega}$};

		\node[block, below=1.8cm of 1-1, label={above:$\MabA$}] (2-1) {$H^s$-convergence\\ (Def.~\ref{def:Hsc})};
		\node[block, right=2cm of 2-1] (2-2) {$H$-convergence\\ (Def.~\ref{def:Hc})};
	
        \draw[arrow] (1-1) -- node[midway, above, font=\footnotesize] {Thm.~\ref{thm:intro-Hs-MabR}} (1-2);
        \draw[arrow] (2-1) -- node[midway, above, font=\footnotesize] {\eqref{th:ccm1}} (2-2);

    \end{tikzpicture}
    \caption{
    Characterisation of $H^s$-convergence for the classes $\MabA$ and $\MabR$. The former is a subset of the latter, consisting of operators with coefficients fixed outside $\Omega$. Here, weak-$\ast$ convergence is understood in the sense of the ${\rm L}^\infty(\mathbb{R}^d\setminus\cl\Omega)^{d\times d}$ space (middle box). In a broader sense, $H^s$-convergence can be interpreted within the framework of nonlocal $H$-convergence (largest box).}
    \label{fig:intro}
\end{figure}

The present paper is organised as follows. In Section \ref{sec:prelim}, we gather some elementary properties for fractional differential operators as well as fractional Sobolev spaces and recall a technical tool useful for homogenisation problems namely a fractional version of compensated compactness. For convenience of the reader and to be self-contained at least in the major aspects of homogenisation theory, we provide the proof. Moreover, in this part of the paper, we provide well-posedness results for the fractional differential problems encountered in this work. Section \ref{sec:MabA} details the main findings of \cite{CCM} in the present context and particularly elaborates on the characterisation result that has been obtained at least implicitly there. In Section \ref{sec:1d}, we focus on the one-dimensional setting. Similar to the local case, the one-dimensional setting is also special for $H^s$-convergence. A prerequisite for this part is established in the previous section, where we strengthen the results of \cite{CCM} to allow for $d=1$.
The subsequent Section \ref{sec:MabR} contains one of our main results, the characterisation of $H^s$-convergence in terms of $H$-convergence and weak-$*$ convergence. We gather some additional properties of $H^s$-convergence in Section \ref{sec:properties} including metrisability, energy convergence and the validity of a lemma eventually leading to a convergence statement for the transpose sequence. Section \ref{sec:schur} describes $H^s$-convergence by means of two different Schur topologies using the main lemma from the previous section and provides the application to a fractional heat type equation. The main body of the manuscript is concluded with Section \ref{sec:Gconv}, where we characterise $H^s$-convergence by $G^s$-convergence for symmetric families.

\section{Preliminaries}\label{sec:prelim}
Fractional calculus is a relatively recent area of research that has attracted growing interest due to its ability to model nonlocal phenomena. Despite rapid progress, the field has not yet converged to a unified theoretical framework, nor does it possess standard references. In this work, we develop the analysis under the most general assumptions possible, with particular emphasis on the one-dimensional case ($d=1$), which exhibits several distinctive features compared to higher-dimensional settings. In the following, we use standard notions for function spaces \cite{Adams}. Moreover,  we use the abbreviation $X(\Rd)$ for $X(\Rd;\R)$ (where $X$ here represents any function space that will be used in the manuscript). Additionally, we sometimes use $X(\Rd)^m$ to denote $X(\Rd;\R^m)$ and $X(\Rd)^{m\times m}$ to denote $X(\Rd;\R^{m\times m})$.
Only in Section~\ref{sec:schur}, the generic codomain will be $\mathbb{C}$.
For two topological vector spaces $X$ and $Y$, we say that $X$ is embedded in $Y$, denoted by $X\hookrightarrow Y$, if there exists a linear, continuous and injective map $i:X\to Y$.
\subsection{Fractional operators}

Following the approach given in \cite{MSP99,Silh19}, we define the space 
$$
\lT(\Rd;\R^m)=\{\varphi\in {\rm C}^{\infty}(\Rd;\R^m): \partial^{\alpha}\varphi\in {\rm L}^1(\Rd;\R^m) \cap {\rm C}_0(\Rd;\R^m), \text{ for every } \alpha\in \Nnul^d\},
$$
endowed with the topology given by the countable family of norms $|\cdot|_k$, $k\in\Nnul$,
$$
|f|_k=\max\Bigl\{\|\partial^{\alpha}f\|_{{\rm L}^1(\Rd;\R^m)}, \|\partial^{\alpha}f\|_{{\rm L}^{\infty}(\Rd;\R^m)}: \alpha\in\Nnul^d,\ 0\leq|\alpha|\leq k\Bigr\}. 
$$

If by $\lD(\Rd;\R^m)$ we denote the space of test functions, i.e., space ${\rm C}_c^{\infty}(\Rd;\R^m)$ with strict inductive limit topology, and by $\lS(\Rd;\R^m)$ the Schwartz space, then we have the following dense embeddings
$$
\lD(\Rd;\R^m) \overset{\rm dense}{\hookrightarrow} \lS(\Rd;\R^m) \overset{\rm dense}{\hookrightarrow} \lT(\Rd;\R^m) \,.
$$
By the property of the transpose operator, we get these embeddings for the dual spaces:
\begin{equation}\label{eq:emb_lTprime}
\lT'(\Rd;\R^m) \hookrightarrow \lS'(\Rd;\R^m) \hookrightarrow \lD'(\Rd;\R^m) \,.
\end{equation}

Let us briefly discuss the differences between the aforementioned dual spaces. For simplicity, we restrict our attention to the scalar case $m=1$.
It is well-known that for any locally integrable function $f\in {\rm L}^1_{\rm loc}(\R^d)$, the linear functional defined by
$$
\lD(\Rd) \ni \varphi\mapsto \int_\Rd f(x)\varphi(x) \,dx
$$
belongs to the space of distributions $\lD'(\Rd)$. This functional can be extended to an element of $\lT'(\Rd)$ if and only if $f\in {\rm L}^1(\Rd)+{\rm L}^\infty(\Rd)$, i.e., there exist $f_1\in  {\rm L}^1(\Rd)$ and $f_\infty\in  {\rm L}^\infty(\Rd)$ such that $f=f_1+f_\infty$, cf.~\cite[Section 6]{Silh19}.
Having the above standard identification in mind, we then have
$$
{\rm L}^1(\Rd)+{\rm L}^\infty(\Rd) 
    \hookrightarrow\lT'(\Rd) \,.
$$

After fixing the necessary notation, let us introduce the definition of the (Riesz) fractional gradient and fractional divergence following \cite[Definition 2.1]{Silh19} (see also \cite[Section 2]{CSjfa19}
and \cite{ShSp}). 

\begin{definition}\label{def:grads-divs} Let $s\in\langle 0,1\rangle$, $\varphi\in \lT(\Rd)$ and $\psi\in \lT(\Rd;\Rd)$. The \textbf{fractional gradient of order $s$} of $\varphi$, and the \textbf{fractional divergence of order $s$} of $\psi$ are, respectively, defined by
\begin{equation}\label{eq:grads}
\grads\varphi(x) = \mu_{s}\int\limits_{\Rd}\frac{(\varphi(x)-\varphi(y))(x-y)}{|x-y|^{d+s+1}}\,dy, \quad x\in\Rd    
\end{equation}
and
\begin{equation}\label{eq:divs}
\divs\psi(x)= \mu_{s}\int\limits_{\Rd}\frac{(\psi(x)-\psi(y))\cdot(x-y)}{|x-y|^{d+s+1}}\,dy, \quad x\in\Rd,
\end{equation}
where $\mu_{s}=2^s\pi^{-d/2}\frac{\Gamma((d+s+1)/2)}{\Gamma((1-s)/2)}$ and $\Gamma$ is the Gamma function,
while $x\cdot y$ denotes the standard Euclidean scalar product on $\Rd$.
\end{definition}

Besides the above definition, these fractional differential operators can be defined in several other equivalent ways, a point we partially address shortly.
For simplicity of exposition, sometimes we will use the notation $\nabla^1=\nabla$ and ${\rm div}^1=\dive$, 
where $\nabla$ and $\dive$ are classical (local) differential operators. 

The above formulae \eqref{eq:grads} and \eqref{eq:divs} are well-defined for bounded Lipschitz functions on $\R^d$ \cite[Lemma 2.3]{ComiSt-I}. However, we choose to focus first on functions from $\lT$ since then the mappings
\begin{equation*}
\grads : \lT(\R^d)\to \lT(\R^d;\R^d) \quad \hbox{and} \quad 
    \divs : \lT(\R^d;\R^d)\to \lT(\R^d)
\end{equation*}
are well-defined and continuous (with respect to the above locally convex topology), cf.~\cite[Theorem 4.3 and Proposition 5.2]{Silh19}. 
Moreover, in the reference it is shown that any continuous operator $\lT(\R^d)\to\lT(\R^d;\R^d)$ which is translationally and rotationally invariant, and $s$-homogeneous, is a multiple of $\grads$. 
Here, the $s$-homogeneity is to be understood in the following way:
\begin{equation*}\label{eq:alpha-hom}
(\forall\varphi\in\lT(\Rd))(\forall \lambda>0)
    \quad \grads\varphi_\lambda(x) 
    = \lambda^s \grads \varphi(\lambda x) \;,
    \quad x\in\Rd \,,
\end{equation*}
where $\varphi_\lambda:=\varphi(\lambda\,\cdot)$.
An analogous property is valid for $\divs$.
Therefore, the above choice for fractional differential operators can be considered canonical. Nevertheless, they have attracted substantial interest only in recent years.



Having the following fractional integration by parts formula in mind,
\begin{equation}\label{eq:frac_int_parts}
 \int\limits_{\Rd} f(x) \divs g(x)\,dx 
    = -\int\limits_{\Rd} \grads g(x) \cdot f(x)\,dx \,, \quad 
    f\in \lT(\Rd), \, g\in \lT(\Rd;\Rd) \,,
\end{equation}
we define the weak notions of fractional operators, 
as given in \cite[Definition 6.1]{Silh19}.

\begin{definition}\label{def:grads-divs-distr} Let $F\in\lT'(\Rd)$ and $G\in\lT'(\Rd;\Rd)$. Then, $\grads F\in\lT'(\Rd;\Rd)$ and $\divs G\in\lT'(\Rd)$ are given by the following relations
$$
{}_{\lT'(\Rd;\Rd)}\langle\grads F, f\rangle_{\lT(\Rd;\Rd)} = -{}_{\lT'(\Rd)}\langle F, \divs f\rangle_{\lT(\Rd)}, \quad f\in\lT(\Rd;\Rd).
$$
and 
\begin{equation*}
  {}_{\lT'(\Rd)}\langle\divs G, g\rangle_{\lT(\Rd)} = -{}_{\lT'(\Rd;\Rd)}\langle G, \grads g\rangle_{\lT(\Rd;\Rd)}, \quad g\in\lT(\Rd).  
\end{equation*} 
\end{definition}

By \eqref{eq:frac_int_parts} this definition is clearly an extension of the operators given in Definition \ref{def:grads-divs}, while the continuity of operators $\grads$ and $\divs$ ensures that the weak fractional operators map $\lT'$ into $\lT'$.
Furthermore, the definition is consistent with the previous comment on bounded Lipschitz functions. Indeed, if $F$ is bounded and Lipschitz continuous, then $F\in\lT'(\Rd)$, while \cite[Proposition 2.8]{ComiSt-I} ensures that $\grads F$, i.e., the above weak fractional gradient, is given by \eqref{eq:grads}. An analogous statement holds for the operator $\divs$.

Although the theory of fractional operators $\grads$ and $\divs$ is rather new, the fractional Laplacian operator $(-\Delta)^{\frac s2}$ is well-established in the literature. 
Hence, we do not state its precise definition, but just refer to \cite{MSP99,Silh19} for the definition of $(-\Delta)^{s}$ for $s\in \langle 0,1\rangle$
on $\lT'(\Rd)$
(note that by \cite[Definition 4.5, Theorem 4.7 and Section 6]{Silh19} this definition coincides on the space ${\rm C}^\infty_c(\Rd)$ with the standard one given e.g.~in \cite[Section 3]{Hitchhiker}).

In the local setting ($s=1$) we have $-\dive \nabla =-\Delta$, where $\Delta$ is the classical (local) Laplacian. This important relation is retained for the fractional operators.
\begin{theorem}\label{eq:prelim_divs_grads_deltas}
    For any $s\in \langle 0,1\rangle$ it holds

\begin{itemize}
    \item[(a)]
$
-\divs(\grads F) = (-\Delta)^{s}F \,,
    \quad F\in\lT'(\Rd) \,,
$
\item[(b)] $
(-\Delta)^{\frac{1-s}{2}}\grads F
    = \grads\,(-\Delta)^{\frac{1-s}{2}} F
    = \nabla F \,, \quad 
    F \in \lT'(\Rd).
$
\end{itemize}
\end{theorem}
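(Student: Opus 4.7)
The plan is to establish both identities first on the Schwartz space $\lS(\Rd)$ and then extend to $\lT'(\Rd)$ by transposition. The key observation is that $\grads$, $\divs$ and $(-\Delta)^s$ are all Fourier multipliers, which makes both claims essentially symbol identities.

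\textbf{Reduction to test functions.} By Definition~\ref{def:grads-divs-distr} combined with the continuity of $\grads$ and $\divs$ on $\lT$ (Silhavy's Thm.~4.3 and Prop.~5.2, cited in the text preceding the statement), for any $F\in\lT'(\Rd)$ and $\varphi\in\lT(\Rd)$,
\[
\langle -\divs\grads F,\varphi\rangle = -\langle F,\divs\grads\varphi\rangle,
\]
and, since $(-\Delta)^s$ on $\lT'$ is introduced in \cite{Silh19,MSP99} as the transpose of its action on $\lT$, also $\langle(-\Delta)^s F,\varphi\rangle = \langle F,(-\Delta)^s\varphi\rangle$. Hence (a) reduces to the test-function identity $-\divs\grads\varphi = (-\Delta)^s\varphi$ in $\lT(\Rd)$; the same reasoning reduces (b) to $(-\Delta)^{(1-s)/2}\grads\varphi = \grads(-\Delta)^{(1-s)/2}\varphi = \nabla\varphi$ for $\varphi\in\lT$.

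\textbf{Verification on $\lS(\Rd)$.} Writing $I_\alpha := (-\Delta)^{-\alpha/2}$ for the Riesz potential of order $\alpha$ (Fourier symbol $|\xi|^{-\alpha}$), one has on $\lS$ the factorisation $\grads = \nabla\circ I_{1-s} = I_{1-s}\circ \nabla$, and dually $\divs = \dive\circ I_{1-s} = I_{1-s}\circ \dive$. The semigroup law $I_\alpha I_\beta = I_{\alpha+\beta}$ (immediate on the Fourier side) then yields
\[
-\divs\grads\varphi = -\dive\, I_{2-2s}\,\nabla\varphi = -I_{2-2s}\Delta\varphi = (-\Delta)(-\Delta)^{-(1-s)}\varphi = (-\Delta)^s\varphi,
\]
proving (a) on $\lS$. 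Likewise, $(-\Delta)^{(1-s)/2}\grads\varphi = (-\Delta)^{(1-s)/2}I_{1-s}\nabla\varphi = \nabla\varphi$ since $(-\Delta)^{(1-s)/2}I_{1-s}=\mathrm{Id}$, and the two orderings in (b) coincide because the radial multiplier $|\xi|^{1-s}$ commutes with the vector multiplier $i\xi|\xi|^{s-1}$. An equivalent derivation multiplies symbols directly, noting that the normalisation $\mu_s$ in Definition~\ref{def:grads-divs} is chosen precisely so that $\widehat{\grads\varphi}(\xi) = i\xi|\xi|^{s-1}\hat\varphi(\xi)$ on $\lS$.

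\textbf{Extension and the main difficulty.} Since $\lS\hookrightarrow\lT$ densely and all the operators appearing on both sides are continuous on $\lT$, the $\lS$-level identities extend to $\lT(\Rd)$; together with the reduction step this yields (a) and (b) on $\lT'(\Rd)$. The most delicate point is justifying the composition $I_{1-s}\circ I_{1-s}=I_{2-2s}$ in borderline cases: for $s$ close to $1/2$ and $d=1$, the order $2-2s$ approaches the integrability threshold of the Riesz kernel, so attempting a pointwise proof directly on $\lT$ or $\lT'$ is awkward. This is precisely why the argument is routed through $\lS$, where composition is nothing but multiplication of Fourier symbols and all conventions are transparent. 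A secondary bookkeeping issue is tracking the constant $\mu_s$ through Silhavy's normalisation to confirm that no spurious factor appears in the final identities.
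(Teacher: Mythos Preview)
Your proposal is correct and follows essentially the paper's approach: verify the identities at the level of $\lT(\Rd)$ (the paper by citing \cite[Thm.~5.3]{Silh19}, you via the Fourier-symbol computation behind Theorem~\ref{ft}) and then pass to $\lT'(\Rd)$ by the duality in Definition~\ref{def:grads-divs-distr}. One small slip: transposing (b) actually reduces it to the $\divs$-identity $(-\Delta)^{(1-s)/2}\divs\psi=\dive\psi$ on $\lT(\Rd;\Rd)$ rather than the $\grads$-identity you wrote, though the same symbol multiplication covers both.
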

While the first identity in the above theorem is provided in \cite[Theorem 5.3]{Silh19} for $\lT(\Rd)$, its straightforward extension to $\lT'(\Rd)$ is deduced directly from the duality argument used to define the weak fractional differential operators. 
The version of the second identity in Theorem \ref{eq:prelim_divs_grads_deltas} 
(see \cite[Theorem 5.3]{Silh19} and \cite[(2.8)]{KS}) adjusted for the Sobolev spaces was heavily used in the proof of the compactness result of $H^s$-convergence in \cite[Theorem 3.1]{CCM}.

 One might be tempted to invert the fractional Laplacian in Theorem \ref{eq:prelim_divs_grads_deltas}\,(b) and get: $\grads=\nabla (-\Delta)^{\frac{s-1}{2}}$.
This indeed holds, but only for smooth functions \cite[Section 5]{Silh19}, and it is more commonly expressed in terms of Riesz potentials $I_\alpha$, which, for $\alpha\in \langle 0,d\rangle$ and a measurable function $f:\Rd\to\R$, is defined by (cf.~\cite[Section 6.1.1]{Grafakos})
\begin{equation*}
I_\alpha f(x) := \frac{\mu_{1-\alpha}}{d-\alpha} \int\limits_\Rd \frac{f(y)}{|x-y|^{d-\alpha}} dy \,,
\end{equation*}
where $\mu_{1-\alpha}$ is given in Definition \ref{def:grads-divs}.
Then in \cite[Section 2.3]{CSjfa19} the following is shown:
\begin{theorem}\label{eq:prelim_riesz_grad}
    For any $\varphi\in {\rm C}^\infty_c(\Rd)$ and any $\psi\in {\rm C}^\infty_c(\Rd;\Rd)$
we have
\begin{equation*}
\grads\varphi = \nabla I_{1-s}\varphi = I_{1-s}\nabla \varphi \quad
    \hbox{and} \quad
    \divs \psi = \dive I_{1-s}\psi = I_{1-s}\dive \psi \,,
\end{equation*}
where for vector-valued functions (here $\nabla \varphi$ and $\psi$)
the Riesz potential $I_{1-s}$ is set to act componentwise.
\end{theorem}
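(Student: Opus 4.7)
The plan is to prove both pairs of identities via the Fourier transform, reducing everything to commutativity of scalar (Fourier) multipliers. For $\varphi\in {\rm C}^{\infty}_c(\R^d)\subset\lS(\R^d)$, the relevant operators act as multipliers on the Fourier side: the classical gradient $\nabla$ has symbol $2\pi i\xi$; the Riesz potential $I_{1-s}$ has symbol $(2\pi|\xi|)^{s-1}$ (the standard fact for Riesz potentials on Schwartz functions, see e.g.~\cite[Chap.~6]{Grafakos}); and the fractional Laplacian $(-\Delta)^{(1-s)/2}$ has symbol $(2\pi|\xi|)^{1-s}$. The symbol of $\grads$ is not taken as input but read off from Theorem~\ref{eq:prelim_divs_grads_deltas}(b): transferring $(-\Delta)^{(1-s)/2}\grads\varphi=\nabla\varphi$ to the Fourier side and dividing by the nowhere-vanishing factor $(2\pi|\xi|)^{1-s}$ (which is legitimate on $\hat\varphi$ since $\hat\varphi$ is Schwartz) gives
$$\widehat{\grads\varphi}(\xi)=2\pi i\xi\,(2\pi|\xi|)^{s-1}\hat\varphi(\xi).$$

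With this symbol in hand, both $\widehat{\nabla I_{1-s}\varphi}(\xi)$ and $\widehat{I_{1-s}\nabla\varphi}(\xi)$ reduce, by the product rule for multipliers, to exactly the same expression, so injectivity of the Fourier transform yields the first pair of identities. An essentially Fourier-free variant goes through the same computations algebraically: apply $I_{1-s}$ (viewed as the inverse of $(-\Delta)^{(1-s)/2}$ on test-function targets) to the identity $(-\Delta)^{(1-s)/2}\grads\varphi=\nabla\varphi$ to get $\grads\varphi=I_{1-s}\nabla\varphi$; and substitute $F=I_{1-s}\varphi$ into the other half of Theorem~\ref{eq:prelim_divs_grads_deltas}(b), $\grads(-\Delta)^{(1-s)/2}F=\nabla F$, together with $(-\Delta)^{(1-s)/2}I_{1-s}\varphi=\varphi$, to obtain $\grads\varphi=\nabla I_{1-s}\varphi$.

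For the divergence identities the argument is entirely parallel applied componentwise to $\psi\in {\rm C}^{\infty}_c(\R^d;\R^d)$: the $\lT'$-symbol of $\divs$ is $-2\pi i\xi\cdot(\,\cdot\,)(2\pi|\xi|)^{s-1}$ (again extracted from Theorem~\ref{eq:prelim_divs_grads_deltas}), and the Fourier transforms of $\dive I_{1-s}\psi$ and $I_{1-s}\dive\psi$ both equal $\widehat{\divs\psi}$. Alternatively, one may obtain the $\divs$-identities directly from the $\grads$-identities by duality using the fractional integration-by-parts formula \eqref{eq:frac_int_parts} together with the self-adjointness of $I_{1-s}$ on test functions.

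The only genuine technical point is to verify that the intermediate objects $I_{1-s}\varphi$ and $I_{1-s}\psi$ lie in a space on which $\grads$, $\divs$, $\nabla$ and $(-\Delta)^{(1-s)/2}$ have already been given a meaning (i.e., $\lT'(\R^d)$) so that the composition manipulations above are justified, and that Theorem~\ref{eq:prelim_divs_grads_deltas}(b) is applicable to these targets. Since $\varphi\in {\rm C}^{\infty}_c(\R^d)$, the Riesz potential $I_{1-s}\varphi$ is smooth and decays like $|x|^{-(d-1+s)}$ at infinity, whence $I_{1-s}\varphi\in {\rm L}^{\infty}(\R^d)+{\rm L}^1(\R^d)\hookrightarrow \lT'(\R^d)$ by the embedding recalled in \eqref{eq:emb_lTprime}; the same applies componentwise to $I_{1-s}\psi$. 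This is the step I expect to require the most care, after which the identities follow from the multiplier calculus above.
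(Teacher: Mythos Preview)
The paper does not prove this theorem; it merely cites \cite[Section~2.3]{CSjfa19}. Your Fourier-multiplier argument is the standard route and is essentially correct, so you are supplying a proof where the paper only gives a reference.

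Two small corrections. First, your claim that $(2\pi|\xi|)^{1-s}$ is ``nowhere-vanishing'' is false: it vanishes at $\xi=0$. This does not break the argument, since $\grads\varphi\in\lT(\R^d;\R^d)\subset {\rm L}^2(\R^d;\R^d)$, the identity from Theorem~\ref{eq:prelim_divs_grads_deltas}(b) holds pointwise a.e.\ on the Fourier side, and $\{0\}$ is a null set; but the justification should be phrased that way rather than as division by a nonvanishing factor. Second, your Fourier convention (with the $2\pi$ in the exponent) does not match the paper's (see Theorem~\ref{ft}), which is purely cosmetic but worth aligning.

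Your ``Fourier-free'' variant is also fine in spirit, but note that it quietly uses $I_{1-s}(-\Delta)^{(1-s)/2}=\mathrm{id}$ on the relevant targets (e.g.\ on $\grads\varphi\in\lT$) and $(-\Delta)^{(1-s)/2}I_{1-s}\varphi=\varphi$ on ${\rm C}^\infty_c$; these are standard but are themselves most cleanly justified via the same multiplier calculus, so the two variants are not really independent.
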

Therefore, as discussed in \cite{CSjfa19}, either of these expressions leads to the same notion of fractional differential operators. 

Let us close this discussion by mentioning another equivalent definition of operators $\grads$ and $\divs$ in terms of the Fourier transformation. Either using Theorem \ref{eq:prelim_divs_grads_deltas} or \ref{eq:prelim_riesz_grad}, one easily gets (see \cite[Theorem 1.4]{ShSp} and \cite[Section 4]{Silh22}) the following result. 
\begin{theorem}\label{ft}
    For any $f\in\lT(\Rd)$ and $g\in\lT(\Rd;\Rd)$ we have
\begin{equation*}
\widehat{\grads f}(\xi)=i\xi|\xi|^{s-1}\widehat{f}(\xi) 
    \quad \text{ and } \quad  
    \widehat{\divs g}(\xi)= i|\xi|^{s-1}\xi\cdot\widehat{g}(\xi) \,,
\end{equation*} 
where $\hat{u}$ denotes the Fourier transform of $u$, given by
$\hat{u}(\xi)= \frac{1}{(2\pi )^{d/2}}\int_\Rd e^{-i x\cdot\xi} u(x) dx$ for all $
\xi\in \Rd$.
For vector-valued functions, the Fourier transform is used componentwise. 
\end{theorem}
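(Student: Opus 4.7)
The plan is to derive both identities from Theorem~\ref{eq:prelim_divs_grads_deltas}(b) combined with the classical Fourier symbols. Recall that on tempered distributions, $(-\Delta)^{\alpha/2}$ acts as multiplication by $|\xi|^\alpha$, while $\widehat{\nabla u}(\xi)=i\xi\,\hat u(\xi)$ and $\widehat{\dive v}(\xi)=i\xi\cdot\hat v(\xi)$. Since $\lS(\Rd)\hookrightarrow\lT(\Rd)$, the dual embedding $\lT'(\Rd)\hookrightarrow\lS'(\Rd)$ (cf.~\eqref{eq:emb_lTprime}) places every object under consideration in the standard Schwartz framework, and the Fourier transform is well defined.

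For the first identity, let $f\in\lT(\Rd)$. Theorem~\ref{eq:prelim_divs_grads_deltas}(b) yields $(-\Delta)^{(1-s)/2}\grads f=\nabla f$ pointwise (both sides lie in $\lT$, hence in $L^1(\Rd)^d\cap L^\infty(\Rd)^d$, so their Fourier transforms are bounded continuous functions). Taking Fourier transforms componentwise and invoking the symbols above gives
\begin{equation*}
|\xi|^{1-s}\,\widehat{\grads f}(\xi)=i\xi\,\hat f(\xi),\qquad \xi\in\Rd.
\end{equation*}
Dividing by $|\xi|^{1-s}$ on $\Rd\setminus\{0\}$ produces the claimed formula. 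Because $s\in\langle 0,1\rangle$ and $d\geq 1$, the factor $|\xi|^{s-1}$ is locally integrable, so the identity is unambiguous as an equality of tempered distributions and the value at $\xi=0$ plays no role.

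For the divergence identity, I would first establish the companion of Theorem~\ref{eq:prelim_divs_grads_deltas}(b), namely
\begin{equation*}
(-\Delta)^{(1-s)/2}\divs g=\dive g,\qquad g\in\lT(\Rd;\Rd).
\end{equation*}
This follows by duality: for any $f\in\lT(\Rd)$, self-adjointness of $(-\Delta)^{(1-s)/2}$ together with the fractional integration by parts formula \eqref{eq:frac_int_parts}, part (b) of Theorem~\ref{eq:prelim_divs_grads_deltas}, and classical integration by parts give
\begin{equation*}
\int_\Rd f\,(-\Delta)^{(1-s)/2}\divs g\,dx=\int_\Rd \divs g\,(-\Delta)^{(1-s)/2}f\,dx=-\int_\Rd g\cdot\grads(-\Delta)^{(1-s)/2}f\,dx=-\int_\Rd g\cdot\nabla f\,dx=\int_\Rd f\,\dive g\,dx,
\end{equation*}
so that the identity follows from the density of $\lT(\Rd)$ in a sufficiently large test-function class. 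Applying the Fourier transform and dividing by $|\xi|^{1-s}$ as before yields the advertised symbol $i|\xi|^{s-1}\xi\cdot\hat g(\xi)$.

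The main obstacle I anticipate is bookkeeping rather than conceptual content: one must justify interchanging the locally convex topology on $\lT$ with Fourier multipliers such as $(-\Delta)^{(1-s)/2}$ whose symbol carries a mild singularity at the origin, and verify that $\widehat{\grads f}$---a priori only a distribution---coincides with the pointwise multiplier on the right-hand side. Because $\grads f\in\lT(\Rd;\Rd)\subset L^1(\Rd)^d$ makes $\widehat{\grads f}$ a bounded continuous function, the identity above determines it pointwise off the origin, and local integrability of $|\xi|^{s-1}$ extends the identification across $\xi=0$, closing the argument.
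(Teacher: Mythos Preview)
Your approach is correct and matches exactly what the paper indicates: the paper does not provide a detailed proof but simply remarks that the identities follow easily from either Theorem~\ref{eq:prelim_divs_grads_deltas} or Theorem~\ref{eq:prelim_riesz_grad}, citing \cite[Theorem 1.4]{ShSp} and \cite[Section 4]{Silh22}. Your route via Theorem~\ref{eq:prelim_divs_grads_deltas}(b) and its divergence companion is precisely one of the two suggested paths, and your handling of the singularity at $\xi=0$ (both sides being continuous with $|\xi|^{s}\to 0$) is sound.
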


To conclude, all three options addressed here, namely, Definition \ref{def:grads-divs}, Theorem \ref{eq:prelim_riesz_grad}, and Theorem \ref{ft}, yield equivalent definitions of fractional differential operators (cf.~\cite{CSjfa19, ShSp}).

\subsection{Fractional Sobolev spaces}\label{subsec:fracSob}

For the analysis of fractional diffusion problems and $H^s$-convergence, we shall need fractional Sobolev spaces.
In this paper, we focus exclusively on the case $p=2$, where ${\rm H}^s(\Rd)={\rm W}^{s,2}(\Rd)$. Here, the former denotes the Bessel potential spaces and the latter the Sobolev–Slobodeckij spaces (see \cite[Section 1.3.1]{Gris85}).
We will continue to use ${\rm H}^s(\Rd)$ to denote these spaces and use any of two equivalent norms. 

Using Theorem \ref{ft} it is an easy exercise to see that 
the norm 
$$
  \|u\|_{s}:=\left(\|u\|^2_{\Ld\Rd} + \|\grads u\|^2_{\Ld\Rd}\right)^{1/2} \,,
    \quad u\in {\rm C}^\infty_c(\Rd) \,,
$$
is equivalent with the norm of ${\rm H}^s(\Rd)$ for functions in ${\rm C}^\infty_c(\Rd)$ (here it is more convenient to use the norm of the Bessel potential spaces which, by the Plancherel theorem, reads: 
$\varphi\mapsto \|(1+|\xi|^2)^{\frac s2}\hat \varphi\|_{\Ld\Rd}$). 
Hence, since ${\rm C}^\infty_c(\Rd)$ is dense in ${\rm H}^s(\Rd)$, 
we have that the above norm $\|\cdot\|_{s}$, when extended to ${\rm H}^s(\Rd)$ via Definition \ref{def:grads-divs-distr}, is an equivalent norm on ${\rm H}^s(\Rd)$, i.e.~$\cl_{\|\cdot\|_{s}}{\rm C}^\infty_c(\Rd)={\rm H}^s(\Rd)$.
This is in fact rigorously proven in \cite[Theorem 1.7]{ShSp} for any integrability exponent $p\in \langle 1,\infty \rangle$. 

\begin{remark}
It is important to notice that the first identity in Theorem  \ref{ft} 
can be extended for $f\in {\rm H}^s(\Rd)$. 
\end{remark}

\begin{remark}
Let us make a short historical comment on the recent development of the fractional Sobolev spaces used here. 
In \cite{ShSp}, for $p\in \langle 1,\infty\rangle$, the following norm 
\begin{equation*}
\|\varphi\|_{s,p} := \Bigl(\|\varphi\|_{{\rm L}^p(\Rd)}^p + \|\grads \varphi\|_{{\rm L}^p(\Rd;\Rd)}^p\Bigr)^{\frac{1}{p}} \,,
\quad \varphi\in {\rm C}^\infty_c(\Rd) \,,
\end{equation*}
is introduced and then ${\rm X}^{s,p}(\Rd)$ is defined as the completion of the normed space 
$$
({\rm C}^\infty_c(\Rd),\|\cdot\|_{s,p}) \,.
$$
Then  it was shown  (\cite[Theorem 1.7]{ShSp}) that ${\rm X}^{s,p}(\Rd)={\rm H}^{s,p}(\Rd)$, where ${\rm H}^{s,p}(\Rd)$ is the Bessel potential space with the integrability exponent $p$ (cf.~\cite[Section 1.3.1]{Gris85}). 

On the other hand, in \cite{CSjfa19} another space 
is introduced:
\begin{equation*}
{\rm S}^{s,p}(\Rd) := \Bigl\{ f\in {\rm L}^p(\Rd) : \grads f\in {\rm L}^p(\Rd;\Rd)\Bigr\} \,,
\end{equation*}
where here $\grads$ denotes the weak derivative in the sense of Definition \ref{def:grads-divs-distr}.
Then for any $p\in [1,\infty]$ the space $({\rm S}^{s,p}(\Rd),\|\cdot\|_{s,p})$ is a Banach space \cite[Proposition 3.20]{CSjfa19} (where again we consider the extension of $\grads$ in the definition of 
$\|\cdot\|_{s,p}$).

In order to prove that ${\rm X}^{s,p}(\Rd)$ and ${\rm S}^{s,p}(\Rd)$ coincide, it is sufficient to show that ${\rm C}^\infty_c(\Rd)$ is dense in ${\rm S}^{s,p}(\Rd)$.
This was done in \cite[Theorem 3.23]{CSjfa19} for $p=1$
and then extended for $p\in [1,\infty\rangle$ in \cite[Theorem A.1]{BCCS22}
(see also \cite[Theorem 2.7]{KS}). Of course, for $p=\infty$ the claim does not hold. 

Therefore, both variants of fractional spaces coincide with Bessel potential spaces. 
\end{remark}

Since in this work we are interested in the study of the suitable boundary value problems on $\Omega\subseteq\Rd$, associated to the introduced fractional operators, we will primarily focus on a closed subspace of ${\rm H}^s(\Rd)$ which contains an information about the boundary condition on $\Omega$.
More precisely, following \cite{CCM}, for $U\subseteq\Rd$ open
we define
\begin{equation*}
{\rm H}^s_0(U) := \cl_{\|\cdot\|_s} {\rm C}^\infty_c(U) \,,
\end{equation*} 
i.e.~the closure of ${\rm C}^\infty_c(U)$ in ${\rm H}^s(\Rd)$.
Of course, ${\rm H}^s_0(\Rd)={\rm H}^s(\Rd)$.
The space ${\rm H}^s_0(U)$ is a separable Hilbert space, with inner product
\begin{equation}\label{eq:HnsO_inner_product}
\langle u,v\rangle_{{\rm H}^s_0(U)}=\int\limits_{U}uv\,dx + \int\limits_{\Rd}\grads u\cdot \grads v\,dx \,,
\end{equation}
where we have used the fact that functions in ${\rm H}^s_0(U)$ are equal to zero a.e.~in $\Rd\setminus U$.
Of course, for any $\varphi\in{\rm C}^\infty_c(U)$ the mapping 
${\rm H}^s(\Rd)\ni u\mapsto \varphi u\in{\rm H}^s_0(U)$ is linear and continuous.

Moreover, we denote the dual of ${\rm H}^s_0(U)$  by ${\rm H}^{-s}(U)$, i.e.,
\begin{equation*}
{\rm H}^{-s}(U) := \left({\rm H}^s_0(U)\right)' \,.
\end{equation*}
Then the following embeddings hold
    $$
    \lD(U)\overset{\rm dense}{\hookrightarrow} {\rm H}^s_0(U) \overset{\rm dense}{\hookrightarrow}\Ld{U} \overset{\rm dense}{\hookrightarrow} {\rm H}^{-s}(U) \hookrightarrow \lD'(U) \,.
    $$
If $U=\Rd$, we can include also the spaces $\lT$ and $\lT'$ to get:
$$
\lD(\Rd)\overset{\rm dense}{\hookrightarrow} \lT(\Rd) \overset{\rm dense}{\hookrightarrow} {\rm H}^s(\Rd) \overset{\rm dense}{\hookrightarrow}\Ld\Rd
 \overset{\rm dense}{\hookrightarrow} {\rm H}^{-s}(\Rd) \hookrightarrow \lT'(\Rd) \hookrightarrow \lD'(\Rd),
$$
where $\Ld\Rd$ is considered as the pivot space, as usual.

\begin{remark}\label{rem:Hs_weak_conv}
From \eqref{eq:HnsO_inner_product} it is clear that a sequence $(u_n)_n$ in ${\rm H}^s(\Rd)$ weakly converges to $u\in  {\rm H}^s(\Rd)$ if and only if
$u_n\xrightharpoonup{\Ld\Rd} u$ and $\grads u_n\xrightharpoonup{\Ld{\Rd;\Rd}} \grads u$.
\end{remark}

Let us briefly comment on some properties of (weak) fractional operators $\grads$ and $\divs$ related to the above fractional Sobolev spaces. 
It is easy to see that the restriction of $\grads$, given in Definition \ref{def:grads-divs-distr}, to ${\rm H}^s(\Rd)$ is a continuous linear operator from  ${\rm H}^s(\Rd)$ to $\Ld{\Rd;\Rd}$.
Of course, the same applies if we replace ${\rm H}^s(\Rd)$ by
${\rm H}_0^s(\Omega)$.

On the other hand, the restriction of $\divs$ to $\Ld{\Rd;\Rd}$
is a continuous linear operator from $\Ld{\Rd;\Rd}$ to 
${\rm H}^{-s}(\Rd)$.
Furthermore, since $\HnsO\subseteq{\rm H}^s(\Rd)$ implies ${\rm H}^{-s}(\Rd)\subseteq\HmsO$, we can define $\divs_{\Omega}:{\rm L}^2(\Rd;\Rd)\to\HmsO$ by the following relation
\begin{equation}\label{eq:divOm}
{}_{\HmsO}\langle \divs_{\Omega} G, g\rangle_{\HnsO} = -\int\limits_{\Rd} G(x)\cdot \grads g(x)\,dx, \quad g\in\HnsO.
\end{equation}
Now, for $G\in\Ld{\Rd;\Rd}$
$$
(\divs G)|_{\HnsO} = \divs_{\Omega}G.
$$
In the sequel we will use $\divs$ to denote both operators. However, we want to emphasise that although $\divs_{\Omega}G = \divs_{\Omega}F$ is equivalent to $(\divs G)|_{\HnsO} =(\divs F)|_{\HnsO}$, it does not imply that $\divs G=\divs F$ on ${\rm H}^s(\Rd)$. Note here that the inclusion ${\rm H}^{-s}(\Rd)\subseteq\HmsO$ is not an embedding since $\HnsO$ is not dense in ${\rm H}^{s}(\Rd)$ in general.
This distinction will be important for some comments that we will make in Section \ref{sec:1d}.

\begin{remark}\label{rem:HnsO_different}
Let us relate the space $\HnsO$ to the existing theoretical framework in the literature.

Recall that the Sobolev-Slobodeckij norm on an open set $U\subseteq\Rd$ is given by 
$\|u\|_{{\rm W}^{s,2}(U)}:=(\|u\|_{\Ld U}^2 + [u]_{s,U}^2)^\frac{1}{2}$,
where the term
\begin{equation*}
[u]_{s,U} := \left(\int\limits_U\int\limits_U 
    \frac{|u(x)-u(y)|^2}{|x-y|^{d+2s}} dx dy\right)^\frac{1}{2} 
\end{equation*}
is the so-called Gagliardo or Slobodeckij seminorm of $u$.
Given the equivalence of the norms $\|\cdot\|_{{\rm W}^{s,2}(\Rd)}$ and $\|\cdot\|_s$, utilising $\|\cdot\|_{{\rm W}^{s,2}(\Rd)}$ instead of $\|\cdot\|_s$ in the definition of $\HnsO$ yields the same space.
In particular, $\HnsO$ coincides with the space $\widetilde{\rm W}^{s,2}_0(\Omega)$ given in \cite[Definition 2.1]{BLP14}
and ${\rm H}^s_\Omega(\Rd)$ in \cite{CCRS21}.
Consequently, we can deduce conclusions about our space $\HnsO$ from the corresponding properties of the spaces discussed above, a relationship we will leverage in the subsequent analysis.

It is worth mentioning that in general we only have the following strict inclusions: 
\begin{equation*}
\begin{aligned}
\HnsO &\subsetneq {\rm W}^{s,2}_0(\Omega) \\
\HnsO &\subsetneq \bigl\{u\in\Ld\Rd : u=0 \ a.e. 
    \ \hbox{in} \ \Rd\setminus\Omega, \ 
    [u]_{s,\Rd}<\infty \bigr\} \,,
\end{aligned}    
\end{equation*}
where ${\rm W}^{s,2}_0(\Omega)$ is defined as the closure of ${\rm C}^\infty_c(\Omega)$
with respect to the weaker norm
$$
\|u\|_{{\rm W}^{s,2}(\Omega)}=(\|u\|_{\Ld \Omega}^2 + [u]_{s,\Omega}^2)^\frac{1}{2} \,,
$$
while the space on the right-hand side of the second inclusion coincides with the spaces ${\rm S}^{s,2}_0(\Omega)$ and ${\rm H}^{s,2}_0(\Omega)$ as used in \cite{KS} and \cite{BCMC20}, respectively.
However, for sufficiently smooth $\Omega$ (e.g.~Lipschitz), 
the equality holds in both inclusions (see e.g.~\cite[Appendix B]{BLP14}, \cite[Appendix]{CCRS21} and \cite[Section 1.3]{Gris85}).
\end{remark}

In order to prove some results on $H^s$-convergence, we shall need additional properties of fractional Sobolev spaces. For their compactness result regarding $H^s$-convergence, the authors of \cite[Theorem 3.1]{CCM} relied on several properties of the fractional Sobolev spaces $\HnsO$ and ${\rm H}^s(\Rd)$ listed in their preliminary Section 2. Specifically, certain conclusions required the continuity of the Riesz potential for ${\rm L}^p$ spaces, which imposed the requirement $1-s < \frac{d}{2}$. However, since $s \in \langle 0,1\rangle $, this condition is automatically met for $d \ge 2$, which explains the restriction used in \cite{CCM}. Since our aim is to address phenomena appearing in the one-dimensional setting (see Section $\text{\ref{sec:1d}}$), we shall now concisely show that all the necessary properties still hold for $d=1$. This immediately implies that \cite[Theorem 3.1]{CCM} can be generalised to the regime $d \ge 1$.

Namely, the arguments provided for Proposition 2.6 (continuity of the Riesz potential), Proposition 2.8 (Poincar\'e inequality) and Proposition 2.9 (Rellich--Kondrašov theorem) from \cite[Section 2]{CCM} require the condition $d\geq 2$. We now present a single theorem that encompasses the version of these statements valid also for $d=1$.
For a clearer presentation we use $X\lesssim_a Y$ to indicate that there exists a constant $C>0$ that depend only on $a$ such that $X\leq C Y$. 

\begin{theorem}\label{thm:properties_d=1}
Let $s\in\langle 0,1\rangle,$ $d\in\mathbb N$, $\Omega\subseteq\Rd$ open and bounded.
\begin{enumerate}
\item[(a)]{\bf(Continuity of the Riesz potential)} $I_{1-s}:\HnsO\to {\rm H}^1_{\rm loc}(\Rd)$ is linear and continuous. 
Moreover, for any $u\in \HnsO$ we have 
$\nabla(I_{1-s}u) = \grads u$ a.e. in $\Omega$.

If $s>1-\frac{d}{2}$, then the statement still holds if we replace the space $\HnsO$ by ${\rm H}^s(\Rd)$.

\item[(b)] {\bf(Poincar\' e inequality)} For any $u\in\HnsO$ we have:
\begin{equation}\label{poincare}
\|u\|_{\Ld\Omega} \lesssim_{d,s,\Omega} \|\grads u\|_{\Ld{\Rd;\Rd}} \,.
\end{equation}

\item[(c)] {\bf (Rellich--Kondrašov theorem)} $\HnsO$ is compactly embedded into $\Ld\Omega$.
\end{enumerate}
\end{theorem}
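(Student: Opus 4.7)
I will address the three parts in order. The main novelty compared with the arguments in \cite{CCM} is the treatment of the low-regularity regime $d=1$ with $s\leq 1/2$, where the Riesz potential $I_{1-s}$ does not map $\Ld\Rd$ into any Lebesgue space and the direct Hardy--Littlewood--Sobolev route used in \cite{CCM} is unavailable; the crucial device that restores uniformity in $d\geq 1$ is to truncate the Riesz kernel and exploit the compact support of elements of $\HnsO$.

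For part (a), Theorem \ref{eq:prelim_riesz_grad} gives $\grads\varphi = \nabla I_{1-s}\varphi$ for $\varphi\in{\rm C}^\infty_c(\Omega)$, so the issue reduces to extending this identity by density, for which it is enough to establish the continuity of $I_{1-s}\colon \Ld\Omega \to \Ldl\Rd$. Exploiting that elements of $\HnsO$ are supported in $\cl\Omega$, for any compact $K\subseteq\Rd$ I choose $R_0 > \mathrm{diam}(K\cup\cl\Omega)$ and split the kernel of $I_{1-s}$ as $|z|^{-(d-1+s)}\mathbf 1_{|z|\leq R_0} + |z|^{-(d-1+s)}\mathbf 1_{|z|>R_0}$. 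The far part contributes nothing to $(I_{1-s}u)(x)$ whenever $x\in K$ and $y\in\supp u\subseteq\cl\Omega$, while the near part lies in ${\rm L}^1(\Rd)$ (since $d-1+s<d$); Young's inequality then yields $\|I_{1-s}u\|_{\Ld K}\lesssim_{R_0,d,s}\|u\|_{\Ld\Omega}$. Combined with $\|\nabla I_{1-s}u\|_{\Ld K}\leq \|\grads u\|_{\Ld\Rd}$, this gives continuity into ${\rm H}^1(K)$; passing to the limit along an approximating sequence in ${\rm C}^\infty_c(\Omega)$ delivers the identity a.e.~on $\Rd$, and in particular on $\Omega$. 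For the second statement (under $s>1-d/2$), the compact-support reduction is no longer required: the Hardy--Littlewood--Sobolev inequality gives $I_{1-s}\colon\Ld\Rd\to{\rm L}^q(\Rd)$ continuously with $q=2d/(d-2(1-s))\in\langle 2,\infty\rangle$, so $I_{1-s}u\in\Ldl\Rd$ and the density argument carries over.

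For part (b), I work on the Fourier side. Theorem \ref{ft} gives $\|\grads u\|_{\Ld\Rd}^2 = \int_\Rd|\xi|^{2s}|\hat u(\xi)|^2\,d\xi$, and I split $\|u\|_{\Ld\Omega}^2 = \|\hat u\|_{\Ld\Rd}^2$ at a threshold $|\xi|=r$ to be fixed. On $\{|\xi|\geq r\}$, the bound $|\hat u|^2\leq r^{-2s}|\xi|^{2s}|\hat u|^2$ yields $\int_{|\xi|\geq r}|\hat u|^2\,d\xi\leq r^{-2s}\|\grads u\|_{\Ld\Rd}^2$. On $\{|\xi|\leq r\}$, the support of $u$ in $\Omega$ and Cauchy--Schwarz give the uniform bound $|\hat u(\xi)|\leq (2\pi)^{-d/2}|\Omega|^{1/2}\|u\|_{\Ld\Omega}$, hence $\int_{|\xi|\leq r}|\hat u|^2\,d\xi\leq C_d|\Omega|\,r^d\,\|u\|_{\Ld\Omega}^2$. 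Choosing $r$ so small that $C_d|\Omega|r^d\leq 1/2$ absorbs this term into the left-hand side and produces the Poincar\'e inequality.

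For part (c), let $(u_n)_n$ be bounded in $\HnsO$. Since each $u_n$ is supported in $\cl\Omega$, the Fourier transforms $\hat u_n$ are uniformly bounded and uniformly Lipschitz on any bounded set (from $|e^{-i\xi\cdot x}-e^{-i\eta\cdot x}|\leq|\xi-\eta|\,|x|$ for $x\in\Omega$). Arzel\`a--Ascoli combined with a diagonal argument over balls of radius $R\to\infty$ extracts a subsequence $\hat u_{n_k}$ converging locally uniformly; together with $\Ld\Rd$-weak compactness, the limit is identified as $\hat u$ for some $u\in\Ld\Omega$. To upgrade to strong convergence, I apply Plancherel to split $\|u_{n_k}-u\|_{\Ld\Rd}^2$ at $|\xi|=R$: the low-frequency piece vanishes as $k\to\infty$ by the local uniform convergence, while the high-frequency piece is controlled by $R^{-2s}$ times the uniform ${\rm H}^s$-bound (by the same estimate as in (b)). Letting $R\to\infty$ completes the proof.
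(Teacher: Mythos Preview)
Your proof is correct in all three parts, and in each case you take a genuinely different route from the paper.

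For part (a), the paper handles the problematic regime $d=1$, $s\le 1/2$ by choosing an auxiliary exponent $p\in\langle 1,4/3]$ with $1/p=1-s/2$, applying the Hardy--Littlewood--Sobolev inequality $I_{1-s}\colon {\rm L}^p\to {\rm L}^q$ (which is available because $1/p>1-s$), and then using the compact support of $\varphi$ to pass between ${\rm L}^2$ and ${\rm L}^p$, ${\rm L}^q$ on bounded sets. Your kernel-truncation argument is more direct: once you observe that the truncated Riesz kernel lies in ${\rm L}^1(\Rd)$ (since $d-1+s<d$) and that the far part never sees the support of $u$, Young's inequality gives the ${\rm L}^2_{\rm loc}$ bound in one step, with no detour through other Lebesgue exponents. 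Both arguments exploit compact support in an essential way; yours makes this more transparent.

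For parts (b) and (c), the paper simply cites external results: it reduces the Poincar\'e inequality to the Gagliardo-seminorm version from \cite{BLP14} via the identity $[u]_{s,\Rd}^2=C_{d,s}\|\grads u\|_{\Ld\Rd}^2$, and it quotes the Rellich--Kondra\v{s}ov theorem from \cite{BLP14} (or \cite{BCMC20}). Your Fourier-side arguments are self-contained and elementary, relying only on Plancherel and the pointwise bound $|\hat u(\xi)|\lesssim|\Omega|^{1/2}\|u\|_{\Ld\Omega}$ that comes from compact support. The frequency-splitting proof of (b) is the standard direct proof of the fractional Poincar\'e inequality, and the Arzel\`a--Ascoli argument in (c) is essentially a Kolmogorov--Riesz compactness proof on the Fourier side. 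One small point in (c): for the high-frequency tail bound you implicitly need $\int|\xi|^{2s}|\hat u|^2\,d\xi<\infty$; this follows immediately from Fatou and the pointwise convergence $\hat u_{n_k}\to\hat u$, but is worth stating.

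In short, the paper's proof is shorter because it outsources (b) and (c) to the literature, while yours is more self-contained and arguably more elementary.
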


\begin{proof}
\begin{enumerate}
\item[(a)] The case $s>1-\frac{d}{2}$ is covered in \cite[Proposition 2.6]{CCM} (in fact the stronger version with the space ${\rm H}^s(\Rd)$ is shown).
Let us then consider the case $s\leq 1-\frac{d}{2}$. Note that since $s\in \langle 0,1\rangle$, this is possible only if $d=1$. 
Hence, the case $s\leq1-\frac{d}{2}$ reduces to $s\in\langle \left . 0,\frac{1}{2}\right ]$, $d=1$, and we shall use the notation $\Ds$ instead of $\grads$ (just to indicate that the object is scalar-valued). 
For $\varphi\in {\rm C}^\infty_c(\Omega)$ by  Theorem \ref{eq:prelim_riesz_grad} 
we have $\|\DD I_{1-s}\varphi\|_{\Ld\R} = \|\Ds\varphi\|_{\Ld\R}$. 
Thus, by linearity of the Riesz potential it is left to estimate the ${\rm L}^2$-norm of $I_{1-s}\varphi$.
Let us define $\frac{1}{p}:= 1-\frac{s}{2}$. Since $s\in \langle \left .0,\frac{1}{2}\right ]$,
we have $p\in \langle \left .1,\frac{4}{3}\right ]$ and $\frac{1}{p}>1-s$. Hence, we can apply the standard continuity result for the Riesz potential (known as the Hardy--Littlewood--Sobolev theorem on fractional integration) (see e.g.~\cite[Theorem 6.1.3]{Grafakos} and \cite[Section 4.2, Theorem 2.1]{Mizuta}):
$$
\|I_{1-s}\varphi\|_{{\rm L}^{q}(\R)} \lesssim_{s} \|\varphi\|_{{\rm L}^p(\R)} \,, 
$$
where $\frac{1}{q}=\frac{1}{p}-(1-s)=\frac{s}{2}$.
Next, take $O\subseteq \R$ open and bounded. 
Then using the facts that $p<2<q$ and $\supp\varphi\subseteq\Omega$, we obtain
$$
\|I_{1-s}\varphi\|_{{\rm L}^{2}(O)} \lesssim_{s,O}  
    \|I_{1-s}\varphi\|_{{\rm L}^{q}(O)}
    \lesssim_s  \|\varphi\|_{{\rm L}^p(\R)}
    = \|\varphi\|_{{\rm L}^p(\Omega)} \lesssim_{s,\Omega}\|\varphi\|_{\Ld{\Omega}}\,.
$$

Hence, for any $\varphi\in {\rm C}^\infty_c(\Omega)$ and any $O\subseteq \R$ open and bounded, it holds
$$
\|I_{1-s}\varphi\|_{{\rm H}^1(O)} \lesssim_{s,O,\Omega} \|\varphi\|_s \,.
$$
Since ${\rm C}^\infty_c(\Omega)$ is dense in $\HnsO$, the claim follows. 
\item[(b)]
Via a standard density argument, it suffices to consider $u \in {\rm C}^\infty_c(\Omega)$. The result then is a consequence of the standard Poincar\'e inequality in terms of the Gagliardo semi-norm (see Remark \ref{rem:HnsO_different} and \cite[Lemma 2.4]{BLP14}):
$$
\|u\|_{\Ld\Omega} \lesssim_{d,s,\Omega} [u]_{s,\Rd} \;, \quad 
    u\in {\rm C}^\infty_c(\Omega) \,.
$$
Indeed, one just need to use the following relations (\cite[Proposition 2.8]{ABSS25}):
$$
[u]_{s,\Rd}^2 = C_{d,s} \int_\Rd u (-\Delta)^s u \, dx
    = C_{d,s} \int_\Rd u \divs \grads u \, dx
    = C_{d,s} \int_\Rd  |\grads u|^2 \, dx \,,
$$
where the first equality is a standard one (see e.g.~\cite[Proposition 3.6]{Hitchhiker}), the second is due to Theorem \ref{eq:prelim_divs_grads_deltas}\,(a), 
while the last one follows from the fractional integration by parts formula \eqref{eq:frac_int_parts}.
Since the value of the positive constant $C_{d,s}$ is irrelevant for our purposes, for simplicity we keep it in an implicit form (note that it can be expressed using the constant $\mu_s$ in Definition \ref{def:grads-divs}); we also refer to \cite[Theorem 2.2]{BCMC20}.
\item[(c)] 
By virtue of Remark \ref{rem:HnsO_different}, the claim follows immediately from \cite[Theorem 2.7]{BLP14}; alternatively, \cite[Theorem 2.3]{BCMC20} provides a more direct proof.
\end{enumerate}
\end{proof}


In conclusion, we state two results that will be fundamental for the analysis in Section \ref{sec:MabR}, where the condition that the coefficients are fixed outside $\Omega$ will be omitted.
The first one concerns compactness of the fractional gradient on the complement of $\Omega$: When $s=1$, we get $\nabla u=0$ a.e.~in $\Rd\setminus\cl\Omega\eqqcolon (\cl\Omega)^c$
for any $u\in {\rm H}_0^1(\Omega)$. Hence, the mapping $u\mapsto \nabla u|_{(\cl\Omega)^c}$ is the zero operator. 
On the other hand, for $s\in \langle 0,1\rangle$ and $u\in\HnsO$, we do not have any information on the support of $\grads u$. However, although the mapping  
$u\mapsto  \grads u|_{(\cl\Omega)^c}$ is no longer trivial, we know that it is compact.

\begin{theorem}[{\cite[Lemma 2.12]{KS}}]\label{KS-Lema:2.12} Let $s\in\langle 0,1\rangle$. For any open set $\Omega_1\subseteq\Rd$ such that $\Omega\Subset\Omega_1$  the mapping 
\begin{equation}\label{comp-on-Om^c}
     \HnsO\ni u\mapsto \grads u|_{\Omega_1^c}\in {\Ld{\Omega_1^c}}  
\end{equation}
is compact.
\end{theorem}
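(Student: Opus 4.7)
The plan is to exploit that any $u\in\HnsO$ vanishes a.e.~on $\Omega_1^c$ (since $\Omega\Subset\Omega_1$ forces $\Omega_1^c\subseteq\Omega^c$), so that the singular integral \eqref{eq:grads} defining $\grads u$ collapses on $\Omega_1^c$ to a non-singular integral operator acting only on $u|_\Omega$. Together with the Rellich--Kondrašov compact embedding $\HnsO\hookrightarrow\Ld\Omega$ from Theorem~\ref{thm:properties_d=1}(c), this will give compactness by composing a compact operator with a bounded one.

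Concretely, for $\varphi\in {\rm C}^\infty_c(\Omega)$ the formula \eqref{eq:grads}, together with $\varphi(x)=0$ for $x\in\Omega_1^c$ and $\supp\varphi\subseteq\Omega$, gives the pointwise identity
\[
\grads\varphi(x)=-\mu_s\int\limits_{\Omega}\frac{\varphi(y)(x-y)}{|x-y|^{d+s+1}}dy\,, \qquad x\in\Omega_1^c \,.
\]
Thus, defining the auxiliary operator $T\colon\Ld\Omega\to\Ld{\Omega_1^c}$ by the same integral expression, one obtains $\grads u|_{\Omega_1^c}=Tu$ in $\Ld{\Omega_1^c}$ for all $u\in\HnsO$ by passing to the limit along a sequence $\varphi_n\to u$ in $\HnsO$: the left-hand sides converge in $\Ld{\Omega_1^c}$ because $\grads\varphi_n\to\grads u$ in $\Ld\Rd$, and the right-hand sides converge by continuity of $T$, which is the next step.

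The heart of the argument is the boundedness of $T$. Since $\cl\Omega$ is compact and $\cl\Omega\subseteq\Omega_1$, the distance $\delta\coloneqq\operatorname{dist}(\cl\Omega,\Omega_1^c)$ is strictly positive, so $|x-y|\geq\delta$ on $\Omega_1^c\times\Omega$, and for $|x|$ large one also has $|x-y|^{-d-s}\lesssim |x|^{-d-s}$ uniformly in $y\in\Omega$. Applying Cauchy--Schwarz in $y$ pointwise in $x$ and integrating over $\Omega_1^c$ gives
\[
\|Tv\|_{\Ld{\Omega_1^c}}^2 \lesssim \|v\|_{\Ld\Omega}^2 \int\limits_{\Omega_1^c}\int\limits_\Omega \frac{1}{|x-y|^{2(d+s)}}\,dy\,dx \,,
\]
and finiteness of the double integral follows by splitting $\Omega_1^c$ into a bounded part near $\Omega$, where the uniform lower bound $|x-y|\geq\delta$ controls the integrand, and an unbounded tail on which the decay $|x-y|^{-2(d+s)}\lesssim|x|^{-2(d+s)}$ is integrable in $\Rd$ since $2(d+s)>d$.

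With $T$ bounded, the map in question factors as
\[
\HnsO\hookrightarrow \Ld\Omega \xrightarrow{\,T\,}\Ld{\Omega_1^c} \,,
\]
with the first arrow compact by Theorem~\ref{thm:properties_d=1}(c). Since the composition of a compact operator with a bounded one is compact, the claim follows. I expect the main technical obstacle to be the kernel estimate for $T$, specifically the bookkeeping needed to reconcile the uniform lower bound $\delta$ near $\Omega$ with the integrable tail at infinity; the rest is routine.
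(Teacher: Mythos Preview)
Your proof is correct. The paper does not supply its own argument for this statement; it simply quotes it from \cite{KS}, so there is nothing to compare against on the paper's side.

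One remark that slightly streamlines your write-up: the double integral bound you establish,
\[
\int\limits_{\Omega_1^c}\int\limits_\Omega \frac{1}{|x-y|^{2(d+s)}}\,dy\,dx<\infty\,,
\]
actually shows that the kernel of $T$ lies in $\Ld{\Omega_1^c\times\Omega}$, i.e.\ $T$ is Hilbert--Schmidt and therefore already compact. Hence the factorisation
\[
\HnsO\hookrightarrow \Ld\Omega \xrightarrow{\,T\,}\Ld{\Omega_1^c}
\]
is a composition of a bounded operator with a compact one regardless of Rellich--Kondra\v{s}ov, and the appeal to Theorem~\ref{thm:properties_d=1}(c) can be replaced by the trivial continuity of the inclusion. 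This does not affect correctness, only economy.
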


The second result is an extension of the property from \cite[Lemma 4.3]{KS}.
\begin{lemma}\label{lem:grads=lambda}
    Let $\Omega\subseteq\Rd$ be open and bounded. For any $x^0\in\Omega\cup(\cl\Omega)^c$ and $\lambda\in\R^d$, there exists $\varphi\in {\rm C}_c^{\infty}(\Omega)$ such that $\grads\varphi(x^0)=\lambda$.
\end{lemma}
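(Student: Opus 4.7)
I would recast the claim as the surjectivity of the linear map
\[
T\colon {\rm C}_c^\infty(\Omega)\to \R^d,\qquad T\varphi \coloneqq \grads\varphi(x^0),
\]
and argue uniformly for both locations of $x^0$. Since $\ran T$ is a linear subspace of $\R^d$, surjectivity is equivalent to the nonexistence of a nonzero $\eta\in\R^d$ annihilating it; I would prove the latter by contradiction.

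The delicate point is that for $x^0\in\Omega$ the integrand in Definition~\ref{def:grads-divs} is only conditionally convergent at $y=x^0$, so one cannot directly apply a variational lemma to an ``honest'' integral representation of $T$. My plan is to bypass this by restricting to test functions $\varphi\in {\rm C}_c^\infty(\Omega\setminus\{x^0\})$: this space coincides with ${\rm C}_c^\infty(\Omega)$ when $x^0\in(\cl\Omega)^c$, and is a nontrivial subspace when $x^0\in\Omega$ since $\Omega\setminus\{x^0\}$ is then a nonempty open subset of $\R^d$. For any such $\varphi$ one has $\varphi(x^0)=0$ and $x^0\notin\supp\varphi$, so \eqref{eq:grads} reduces to the absolutely convergent integral
\[
T\varphi = -\mu_s\int_\Omega \varphi(y)\,\frac{x^0-y}{|x^0-y|^{d+s+1}}\,dy.
\]

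Assuming $\eta\cdot T\varphi=0$ for all $\varphi\in {\rm C}_c^\infty(\Omega)$, and restricting to the subspace above, I would obtain
\[
\int_\Omega \varphi(y)\,\frac{\eta\cdot(x^0-y)}{|x^0-y|^{d+s+1}}\,dy = 0,\qquad \varphi\in {\rm C}_c^\infty(\Omega\setminus\{x^0\}),
\]
whose kernel is continuous on $\Omega\setminus\{x^0\}$. The fundamental lemma of the calculus of variations then forces the kernel to vanish identically there, i.e.~$\eta\cdot(x^0-y)=0$ for every $y$ in an open ball contained in $\Omega\setminus\{x^0\}$; since this is an affine condition on $y$ whose zero set is a hyperplane, it implies $\eta=0$. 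Hence $\ran T=\R^d$ and the lemma follows. The only genuine obstacle is the support restriction used to obtain a locally integrable kernel, and it is precisely this device that extends the $x^0\in\Omega$ case of \cite[Lemma 4.3]{KS} to cover the new case $x^0\in(\cl\Omega)^c$ within a single argument.
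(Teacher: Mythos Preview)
Your argument is correct, and it takes a genuinely different route from the paper's.

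The paper first reduces to $\Omega=B(0,1)$ and $\lambda=\beta e_1$, then treats the two locations of $x^0$ separately: for $x^0\in\Omega$ it quotes \cite[Lemma 4.3]{KS}, while for $x^0\in(\cl\Omega)^c$ it writes down an explicit test function of the form $\varphi(x)=\frac{|x^0-x|^{d+s+1}}{|x|^{d+s+1}}\psi(x_1)\theta(x_2)\cdots\theta(x_d)$ with carefully chosen one-dimensional cutoffs $\psi,\theta$ and exploits oddness/evenness to evaluate the components of $\grads\varphi(x^0)$ directly. Your approach instead views $T\varphi=\grads\varphi(x^0)$ as a linear map into $\R^d$ and establishes surjectivity by duality: restricting to $\varphi\in {\rm C}_c^\infty(\Omega\setminus\{x^0\})$ makes the integral representation of $\eta\cdot T\varphi$ absolutely convergent with a continuous kernel, so the fundamental lemma forces $\eta\cdot(x^0-y)\equiv 0$ on the nonempty open set $\Omega\setminus\{x^0\}$, whence $\eta=0$. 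The payoff is a unified, nonconstructive proof that handles $x^0\in\Omega$ and $x^0\in(\cl\Omega)^c$ simultaneously, avoids the reduction to a ball and a coordinate direction, and does not need to invoke \cite[Lemma 4.3]{KS} at all; the paper's proof, by contrast, is constructive in the complement case and yields an explicit $\varphi$.
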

\begin{proof} Without loss of generality, let $\Omega=B(0,1)$ and $\lambda=\beta e_1$, $\beta\in\R$.
    For $x^0\in\Omega$, the statement is proved in \cite[Lemma 4.3]{KS}. 
    Let $x^0 \in (\cl\Omega)^c$ and consider non-trivial functions $\psi, \theta \in {\rm C}^{\infty}_c(\langle -1,1\rangle)$ supported in $\left\langle-\frac{1}{\sqrt{d}},-\frac{1}{2\sqrt{d}}\right\rangle \cup \left\langle\frac{1}{2\sqrt{d}},\frac{1}{\sqrt{d}}\right\rangle$. We further require that $\theta \geq 0$, and that $\psi$ is an odd function satisfying $\psi(x) \geq 0$ on $\langle 0, 1 \rangle$. With these functions, we define $\varphi$ as
    $$
    \varphi(x):= \frac{|x^0-x|^{d+s+1}}{|x|^{d+s+1}}\psi(x_1)\theta(x_2)\cdots\theta(x_d), \quad x\in \Rd.
    $$
    Obviously $\varphi(x^0)=0$, while from the properties of $\psi$ and $\theta$, and the fact that for $x\in B( 0,1)$ we have $|x^0-x|>|x^0|-1>0$, it follows that $\varphi\in {\rm C}^{\infty}_c(B(0,1))$. Then the first component of the fractional gradient of $\varphi$ at $x^0$ reads
    \begin{align*}
     (\grads\varphi(x^0))_1 &= \mu_{s}\int\limits_{\Rd}\frac{(\varphi(x^0)-\varphi(y))(x^0_1-y_1)}{|x^0-y|^{d+s+1}}\,dy \\
    &= -\mu_{s}\int\limits_{\Rd}\frac{\psi(y_1)\theta(y_2)\cdots\theta(y_d)(x^0_1-y_1)}{|y|^{d+s+1}}\,dy\\
    &= -\mu_{s}x^0_1\int\limits_{\Rd}\frac{\psi(y_1)\theta(y_2)\cdots\theta(y_d)}{|y|^{d+s+1}}\,dy + \mu_{s}\int\limits_{\Rd}\frac{y_1\psi(y_1)\theta(y_2)\cdots\theta(y_d)}{|y|^{d+s+1}}\,dy.  
    \end{align*}
    Since $\psi$ is odd the first integral in the above equation vanishes. As $\psi$ and $\theta$ are not identically zero and $y_1\psi(y_1)$ and $\theta$ are non-negative, the second integral yields
    $$
    (\grads\varphi(x^0))_1= 2\mu_{s}\int\limits_{\{\Rd:y_1>0\}}\frac{y_1\psi(y_1)\theta(y_2)\cdots\theta(y_d)}{|y|^{d+s+1}}\,dy \eqqcolon \gamma >0.
    $$
    Similarly, for $j\in \{2,\ldots,d\}$, we have
    $$
    (\grads\varphi(x^0))_j= -\mu_{s}x^0_j\int\limits_{\Rd}\frac{\psi(y_1)\theta(y_2)\cdots\theta(y_d)}{|y|^{d+s+1}}\,dy + \mu_{s}\int\limits_{\Rd}\frac{y_j\psi(y_1)\theta(y_2)\cdots\theta(y_d)}{|y|^{d+s+1}}\,dy=0,
    $$
    where we used that both integrands are odd functions with respect to $y_1$. Hence, $\grads (\frac{\beta}{\gamma}\varphi)(x^0)=\beta e_1$, so that $\frac{\beta}{\gamma}\varphi$ is the desired function.
\hfill$\blacksquare$

\subsection{Fractional compensated compactness}

In the classical analysis of partial differential equations, the theory of compensated compactness plays a significant role. It dates back to the work of Murat and Tartar in the second half of the twentieth century \cite{Mu1,Mu2,T1,T5}. 
Specifically, they proved the div-curl lemma, a well-known result providing sufficient conditions under which the product of two weakly convergent sequences converges to the product of their limits. Using fractional integration by parts and the fractional Leibniz rule, we are able to prove a special case of the div-curl lemma in the fractional setting, where one sequence is in the form of $\grads v_n$ (hence, curl-free) and another sequence has strongly convergent fractional divergence ($\divs u_n$ strongly converges in $\HmsO$). We refer to a recent survey \cite{Cap-div-rot}, where this property and two more variants of mixed local-nonlocal div-curl lemmas were proved in the loc-spaces setting.

First, recall the Leibniz rule for fractional gradients (the analogue statement holds for fractional divergence), which can be found in the literature in several forms, depending on the considered function spaces, see e.g. \cite[lemmas 2.4,2.5]{ComiSt-I}, \cite[Lemma 3.2]{ComiSt}, \cite[Lemma 2.11]{KS}. Here we state the version which is suitable for our analysis.

\begin{proposition}[Fractional Leibniz rule]\label{Frac-leibniz}
    Let $\varphi\in{\rm C}^{1}_c(\Omega)$ and $u\in{\rm H}^s(\Rd)$. Then, $\varphi u\in\HnsO$ and 
    $$
    \grads(\varphi u)=\varphi\grads u + u\grads\varphi + \grads_{\rm NL}(\varphi,u),
    $$
    where the remainder term $\grads_{\rm NL}(\varphi,u)$ is given by 
    $$
    \grads_{\rm NL}(\varphi,u)(x):=
    \mu_{s}\int\limits_{\Rd}\frac{(\varphi(x)-\varphi(y))(u(x)-u(y))(x-y)}{|x-y|^{d+s+1}}\,dy, \quad  x\in\Rd.
    $$
    Moreover, $\grads_{\rm NL}(\varphi,\cdot):\Ld\Rd\to\Ld{\Rd;\Rd}$ is a linear and continuous map which satisfies 
    $$
    \|\grads_{\rm NL}(\varphi,u)\|_{\Ld{\Rd;\Rd}} \lesssim_{d,s}\|\varphi\|^{1-s}_{{\rm L}^{\infty}(\Omega)}\|\nabla\varphi\|^s_{{\rm L}^{\infty}(\Omega)}\|u\|_{\Ld\Rd}.
    $$

\end{proposition}

\begin{proposition}[Fractional compensated compactness]\label{prop:div-rot} 
    Let $\Omega\subseteq\Rd$ be open and bounded, and let $(u_n)_n$ in $\Ld{\Rd;\Rd}$ and $(v_n)_n$ in ${\rm H}^s(\Rd)$. For some $u\in\Ld{\Rd;\Rd}$ and $v\in{\rm H}^s(\Rd)$ assume
    $$
    v_n\xrightarrow{\Ld\Rd}v, \quad  v_n\xrightharpoonup{{\rm H}^s(\Rd)}v,
    $$
    $$
    u_n\xrightharpoonup{\Ld{\Rd;\Rd}}u, \quad   \divs u_n\xrightarrow{\HmsO}\divs u.
    $$
    Then 
    $$
    u_n\cdot\grads v_n \xrightharpoonup{\lD'(\Omega)} u\cdot\grads v.$$
\end{proposition}

\begin{remark}
Note that by Theorem \ref{thm:properties_d=1}\,(c) and Remark \ref{rem:Hs_weak_conv} the assumptions on $v_n$ can be replaced by a stronger requirement
$$
(v_n)_n \text{ in }\HnsO \quad \text{and} \quad v_n\xrightharpoonup{\HnsO}v.
$$  
\end{remark}

\begin{proof}
    For $\varphi\in{\rm C}^{\infty}_c(\Omega)$, using the fractional Leibniz rule we get
    \begin{eqnarray*}
     \int\limits_{\Omega}(u_n\cdot\grads v_n)\varphi\,dx &=& 
    \int\limits_{\Rd}u_n\cdot(\varphi\grads v_n)\,dx  \\
    &=& \int\limits_{\Rd}u_n\cdot\grads(\varphi v_n)\,dx - \int\limits_{\Rd}(u_n\cdot\grads\varphi) v_n\,dx -
    \int\limits_{\Rd}u_n\cdot\grads_{\rm NL}(\varphi,v_n)\,dx.
    \end{eqnarray*}
    For the first term, we have 
    \begin{equation}\label{konv1}
    \int\limits_{\Rd}u_n\cdot\grads(\varphi v_n)\,dx =-\,{}_{\HmsO}\langle\divs u_n,\varphi v_n\rangle_{\HnsO} \xrightarrow{n\to\infty} 
    -\,{}_{\HmsO}\langle\divs u,\varphi v\rangle_{\HnsO}.
    \end{equation}
    Regarding the second term, since $u_n\xrightharpoonup{\Ld{\Rd;\Rd}}u$, $\grads\varphi\in{\rm L}^{\infty}(\Rd;\Rd)$ and $v_n\xrightarrow{\Ld\Rd}v$, we get 
    \begin{equation}\label{konv2}
    \int\limits_{\Rd}(u_n\cdot\grads\varphi) v_n\,dx\xrightarrow{n\to\infty}\int\limits_{\Rd}(u\cdot\grads\varphi) v\,dx.
    \end{equation}
    Moreover, from Proposition \ref{Frac-leibniz} we have that $v_n\xrightarrow{\Ld{\Rd}}v$ implies $\grads_{\rm NL}(\varphi,v_n)\xrightarrow{\Ld{\Rd;\Rd}}\grads_{\rm NL}(\varphi,v)$, which together with $u_n\xrightharpoonup{\Ld{\Rd;\Rd}}u$ gives 
    \begin{equation}\label{konv3}
    \int\limits_{\Rd}u_n\cdot\grads_{\rm NL}(\varphi,v_n)\,dx\longrightarrow \int\limits_{\Rd}u\cdot\grads_{\rm NL}(\varphi,v)\,dx.
    \end{equation}  
    Using \eqref{konv1}, \eqref{konv2} and \eqref{konv3}, and applying the fractional Leibniz rule once more, this time to $\varphi$ and $v$, we obtain
    $$
    \int\limits_{\Omega}(u_n\cdot\grads v_n)\varphi\,dx \longrightarrow
    \int\limits_{\Omega}(u\cdot\grads v)\varphi\,dx,
    $$
    which concludes the proof.
\end{proof}

We conclude this section by giving an auxiliary result which is an extension (to the fractional setting) of the classical characterisation of the dual Sobolev space. This property will be significant for the analysis of the one-dimensional homogenisation problem.

\begin{lemma}\label{lem:f-g}
   For $f\in \HmsO$, there exists $g\in\Ld{\Rd;\Rd}$ such that $f=\divs g$ in $\HmsO$, and the mapping $f\mapsto g$ is a linear isometry $\HmsO\str\Ld{\Rd;\Rd}$.
\end{lemma}
\begin{proof}
    From the Poincar\'e inequality it follows that $\|\cdot\|_{\HnsO}$ and $\|\grads\cdot\,\|_{\Ld\Rd}$ are equivalent norms on $\HnsO$, implying that $\HnsO$ is a Hilbert space when endowed with the (equivalent) inner product given by
    \begin{equation*}
        \langle v,u\rangle = \int\limits_{\Rd}\grads v\cdot\grads u\,dx, \quad u,v\in\HnsO.
    \end{equation*}
    Furthermore, the Riesz representation theorem implies that for a given $f\in\HmsO$ there exists a unique $v\in\HnsO$ such that 
    \begin{equation}
    {}_{\HmsO}\langle f,u\rangle_{\HnsO} = \int\limits_{\Rd}\grads v\cdot\grads u\,dx, \quad u\in\HnsO,
    \end{equation}
    with $f\mapsto v$ being an isometric isomorphism $\HmsO\str \HnsO$. Having in mind the definition of $\divs$ on $\Ld{\Rd;\Rd}$, we conclude 
    $$
    f=\divs(-\grads v) \quad \text{on } \HnsO.
    $$
    Hence, $g=-\grads v\in\Ld{\Rd;\Rd}$ is the desired function, and $f\mapsto g$ is clearly a linear isometry $\HmsO\str\Ld{\Rd;\Rd}$.
\end{proof}

\subsection{Fractional diffusion: well-posedness}

For  $f\in\HmsO$ we consider the fractional diffusion problem as follows.
\begin{equation}\label{fp}
\left\{
	\begin{array}{rl}
		-\divs (\mA\grads u)=f \quad   &\rm{in}\,\, \Omega, \\
		u=0 \quad &{\rm in}\,\, \Rd\setminus\Omega. \\
	\end{array}
	\right.
\end{equation}
Here, we assume $\mA\in\MabR$, where, let us recall, 
    $\lM(\alpha,\beta;U)$ denotes the class of all matrix-valued functions $\mA:U\to\R^{d\times d}$ satisfying
    \begin{align}
        &\mA(x)\xi\cdot\xi\geq\alpha|\xi|^2, \quad &\text{for every $\xi\in\Rd$ and for a.e. $x\in U$},\label{mA1}\\
        &\mA(x)\xi\cdot\xi\geq\frac{1}{\beta}|\mA(x)\xi|^2, \quad &\text{for every $\xi\in\Rd$ and for a.e. $x\in U$}. \label{mA2}
    \end{align}

We say that $u\in\HnsO$ is a \textbf{weak} or \textbf{variational solution of the problem} (\ref{fp})
if for any $v\in \HnsO$
\begin{equation*}
\int\limits_\Rd \mA(x)\grads u(x) \cdot\grads v(x) \, dx
    = {}_{\HmsO}\langle f, v\rangle_{\HnsO} \,. 
\end{equation*}

\begin{proposition}
Let $\mA\in\MabR$. For any $f\in\HmsO$ there exists a unique weak solution
$u\in \HnsO$ of \eqref{fp}. Moreover, the solution $u$ satisfies the following estimate
\begin{equation}\label{a-pr-est}
\|\grads u\|_{\Ld{\Rd;\Rd}} \leq \frac{1}{\alpha} \|f\|_\HmsO \,.
\end{equation}
\end{proposition}
The well-posedness result presented in \cite[Lemma 2.12]{CCM} is provided for $d \ge 2$
by means of the Lax--Milgram theorem. However, the case $d=1$ is obtained analogously, since the necessary properties have been established for this setting in Subsection \ref{subsec:fracSob} (see Theorem \ref{thm:properties_d=1}). 
A discussion of well-posedness for more general nonlocal problems can be found in \cite{FKV15}. Note that in the present setting and, quite generally also for a nonlinear setting, as it was shown in the main result of \cite{TW14}, the only requirement for well-posedness is a mere closed range inequality as for instance provided by the (fractional) Poincar\'e inequality and an invertibility condition satisfied by the coefficients. The former has been exemplified above and the latter is implied by the coercivity condition on the conductivities.

\section{$H^s$-convergence on $\MabA$}\label{sec:MabA}

In this section, we focus on the homogenisation problems with coefficients from the class $\MabA$, and we elaborate on the results obtained in \cite[Sec. 3]{CCM}. Recall that, for a fixed $\mA_0\in\MabR$, we denote by $\MabA$ the class of all matrix-valued functions $\mA\in\MabR$ satisfying $\mA(x)=\mA_0(x)$, for almost every $x\in\Rd\setminus\Omega$. 

As we have mentioned, the proof  of $H^s$-compactness for this class of coefficients, \cite[Thm. 3.1]{CCM}, relies on the assumption that $d\geq2$. Now, with the properties from Theorem \ref{thm:properties_d=1}, all statements used in the proof of \cite[Thm. 3.1]{CCM} also hold for $d=1$ (see the discussion above Theorem \ref{thm:properties_d=1}). Therefore, the following results are valid for any $d\geq1$.

\begin{theorem}[{\cite[Thm. 3.1, Lem. 3.3]{CCM}}]\label{thm:CCM}
    Let $\mA_0\in\lM(\alpha,\beta;\Rd)$. For any sequence $(\mA_n)_n$ in $\MabA$, there exist a subsequence (which we do not relabel) and $\mA\in\MabA$ such that 
    $$
    \mA_n \text{ $H^s$-converges to } \mA \text{ in } \Omega.
    $$
    Furthermore, the limit matrix function $\mA$ is unique, i.e., if for $\mA,\tilde{\mA}\in\MabA$ it holds that
    $$
    \mA_n \text{ $H^s$-converges to } \mA \text{ in } \Omega 
    $$
    and 
    $$
    \mA_n \text{ $H^s$-converges to } \tilde{\mA} \text{ in } \Omega,
    $$
    then 
    $$
    \mA(x)= \tilde{\mA}(x) \quad \text{for a.e. } x\in\Rd.
    $$
\end{theorem}

The compactness obtained above follows directly from the central theorem of $H$-convergence, which provides both compactness and metrisability. For ease of reference, we summarise these results in the following theorem.
\begin{theorem}
[{\cite[Thm.~1.2.16 and Prop.~1.2.24]{Allaire}}]\label{thmLocalHisMetrAndComp}
   There is a metric $d_H$ on $\Mab$ that induces $H$-convergence. Moreover, $(\Mab,\tau_H)$ is compact, where $\tau_H$ is the topology induced by $d_H$.
\end{theorem}

Now, let us discuss implications of the results presented in Theorem \ref{thm:CCM} and clarify how they imply equivalence between $H$- and $H^s$-convergence, stated in \eqref{th:ccm1}. 
\paragraph{Proof of \eqref{th:ccm1}.}
    From the proof of \cite[Thm. 3.1]{CCM} we see that $H$-convergence implies $H^s$-convergence, i.e., if a sequence $(\mA_n)_n$ in $\Mab$ $H$-converges to $\mA\in\Mab$, then, for every fixed $\mA_0\in\MabR$, the sequence 
    $$\widehat{\mA}_n(x)=\left\{
	\begin{array}{rl}
		\mA_n(x),  & x\in\Omega \\
		\mA_0(x), & x\in\Rd\setminus\Omega\\
	\end{array}
	\right.
    $$
    $H^s$-converges to 
    $$\widehat{\mA}(x)=\left\{
	\begin{array}{rl}
		\mA(x),  & x\in\Omega \\
		\mA_0(x), & x\in\Rd\setminus\Omega.\\
	\end{array}
	\right.
    $$
    
    On the other hand, assume that a sequence $(\mA_n)_n$ in $\MabA$ $H^s$-converges to $\tilde{\mA}\in \MabA$ and let $\mB_n:=\mA_n|_\Omega$. Then,  $(\mB_n)_n$ in $\Mab$ and we can pass to a subsequence $(\mB_{n_k})_{n_k}$ which $H$-converges to $\mB\in\Mab$, say. The discussion above then provides that $\mA_{n_k}$ $H^s$-converges to 
    $\widehat{\mB}$, where $\widehat{\mB}$ is the extension of the matrix function $\mB$ on $\Omega$ to the whole $\Rd$ in the above sense.
    Hence, $\mA_{n_k}$ $H^s$-converges to both $\tilde{\mA}$ and $\widehat{\mB}$, and the uniqueness of the $H^s$-limit implies that $\mB=\tilde{\mA}$ a.e.~on $\Omega$. In particular, the $H$-limit does not depend on the chosen subsequence of $(\mB_n)_n$. Thus, we can conclude that the whole sequence $\mB_n=\mA_n|_\Omega$ $H$-converges to $\tilde{\mA}|_\Omega$. This concludes the proof.
    $\hfill\blacksquare$

\begin{remark}\label{rem:loc-nonloc}
    In order to write the observations from \eqref{th:ccm1} in a more systematic way, let us
    define the mapping 
    $$
    \lR:\MabA\to\Mab, \quad \lR(\mA):=\mA|_\Omega.
    $$
    Then $\lR^{-1}(\mA)=\widehat{\mA}$, where $\widehat{\cdot}$ is defined in the proof of \eqref{th:ccm1}, so $\lR$ is a bijection. Furthermore, the above discussion implies that for any sequence $(\mA_n)_n$ in $\MabA$ we have: 
    $$
    \mA_n \stackrel{H^s}{\longrightarrow} \mA \quad \iff \quad \lR(\mA_n) \stackrel{H}{\longrightarrow} \lR(\mA).
    $$

    Although two notions of convergence are equivalent (in the above sense), the corresponding diffusion problems differ significantly due to their local/nonlocal nature, which influences both the solution and the diffusion process.

Furthermore, consider the metric $d_H$ from Theorem~\ref{thmLocalHisMetrAndComp}. Then the pullback metric given by
$$
\hat{d}_H(\mA,\mB)=d_H\left(\lR(\mA),\lR(\mB)\right) \,, \quad \mA,\mB\in\MabA \,,
$$
is a metric on $\MabA$, compatible with the $H^s$-convergence.
Indeed, 
\begin{align*}
\mA_n\xrightarrow{H^s} \mA &\iff \lR(\mA_n) \xrightarrow{H} \lR(\mA) \\
&\iff \lR(\mA_n) \xrightarrow{d_H} \lR(\mA) \\
&\iff \mA_n \xrightarrow{\hat d_H} \mA \,.
\end{align*}
Thus, the $H^s$-topology on $\MabA$ is metrisable, and $\lR$ is a homeomorphism between the metric spaces $(\MabA,\hat{d}_H)$ and $(\Mab,d_H)$. 
\end{remark}
Having in mind Remark \ref{rem:loc-nonloc}, we deduce that  some properties of $H^s$-convergence follow directly from their $H$-convergence counterparts (see \cite[Sec. 1.2.4]{Allaire}). Moreover, following the construction of the metric $d_H$ from Theorem~\ref{thmLocalHisMetrAndComp} given in \cite[Proposition 1.2.24]{Allaire}, we are able to define another metric on $\MabA$ which relies on the properties of fractional diffusion, and is also compatible with the $H^s$-convergence. Of course, such a metric is topologically equivalent with $\hat{d}_H$ given in Remark \ref{rem:loc-nonloc}.

\begin{proposition}[Locality of $H^s$-convergence]\label{prop:locality}
Let $\mA_0\in\MabR$. Moreover, let $(\mA_n)_n$ and $(\mB_n)_n$ be two sequences in $\MabA$, which $H^s$-converge to $\mA$ and $\mB$, respectively. Then, for every open $\omega\subset\Rd$, it holds
    $$
    (\forall n\in\N) \quad  \mA_n(x)=\mB_n(x) \text{ a.e. on } \omega \quad \Rightarrow 
    \quad \mA(x)=\mB(x) \text{ a.e. on } \omega.
    $$
\end{proposition}

\begin{proposition}[Transpose invariance]  
    Let $\mA_0\in\MabR$. If $(\mA_n)_n$ in $\MabA$ $H^s$-converges to $\mA\in\MabA$, then 
    $$
    \mA_n^{\rm T} \text{ $H^s$-converges to } \mA^{\rm T}.
    $$
\end{proposition}

\begin{proposition}[Energy convergence]\label{prop:energy-conv} 
 Let $\mA_0\in\MabR$ and $(\mA_n)_n$ in $\MabA$, $\mA\in\MabA$ be such that 
    $$\mA_n \text{ $H^s$-converges to } \mA.
    $$
    Then, for any $f\in\HmsO$ the sequence $(u_n)_n$ of solutions to
    \begin{equation}\label{pr_n}
    \left\{
	\begin{array}{rl}
		-\divs (\mA_n\grads u_n)=f  &\rm{in}\,\, \Omega, \\
		u_n=0 &{\rm in}\,\, \Rd\setminus\Omega,\,\\
	\end{array}
	\right.
    \end{equation}
    satisfies
    \begin{equation}\label{energy-1}
    \mA_n\grads u_n \cdot \grads u_n \xrightharpoonup{\lD'(\Omega)} \mA\grads u \cdot \grads u
    \end{equation}
    and 
    \begin{equation}\label{energy-2}
    \int\limits_{\Rd} \mA_n\grads u_n \cdot \grads u_n\, dx \to \int\limits_{\Rd} \mA\grads u \cdot \grads u\, dx,
    \end{equation}
    where $u\in\HnsO$ is the unique weak solution to 
    \begin{equation}\label{pr}
    \left\{
	\begin{array}{rl}
		-\divs (\mA\grads u)=f  &\rm{in}\,\, \Omega, \\
		u=0 &{\rm in}\,\, \Rd\setminus\Omega.\,\\
	\end{array}
	\right.
    \end{equation}
\end{proposition}

\begin{proof}
    Convergence \eqref{energy-1} is a direct consequence of Proposition \ref{prop:div-rot} (Fractional div-curl lemma). In order to prove convergence \eqref{energy-2}, we shall use weak formulations of problems \eqref{pr_n} and \eqref{pr}. From \eqref{pr_n} and the fact that $u_n\xrightharpoonup{\HnsO}u$, we get
    \begin{equation}\label{energy_n}
    \int\limits_{\Rd} \mA_n\grads u_n \cdot \grads u_n\, dx ={}_{\HmsO}\langle f,u_n\rangle_{\HnsO}\to {}_{\HmsO}\langle f,u\rangle_{\HnsO}.
    \end{equation}
    On the other hand, the weak formulation for the problem \eqref{pr} implies
    $$
    \int\limits_{\Rd} \mA\grads u \cdot \grads u\, dx = {}_{\HmsO}\langle f,u\rangle_{\HnsO},
    $$
    which together with \eqref{energy_n} gives the desired result.
\end{proof}

\begin{theorem}[Metrisability]\label{thm:metr}
Let $(f_n)_{n\in\N}$ be a dense countable family in $\HmsO$, $s\in\langle 0,1\rangle$, $0<\alpha\le\beta$. We define a metric on $\MabA$ by
\begin{equation}\label{metric}
    d_s(\mA,\mB)=\sum_{n=1}^\infty 2^{-n}\frac{\|u_n-v_n\|_{\Ld\Omega}+\|\mA\grads u_n-\mB\grads v_n\|_{\HmsO}}{\|f_n\|_\HmsO},
\end{equation}
where $u_n,\,v_n\in \HnsO$ are unique solutions of 
$$
	\left\{
	\begin{array}{rl}
		-\divs (\mA\grads u_n)=f_n  &\rm{in}\,\, \Omega, \\
		u_n=0 &{\rm in}\,\, \Rd\setminus\Omega,\,\\
	\end{array}
	\right.\quad
    	\left\{
	\begin{array}{rl}
		-\divs (\mB\grads v_n)=f_n  &\rm{in}\,\, \Omega, \\
		v_n=0 &{\rm in}\, \,\Rd\setminus\Omega.\,\\
	\end{array}
	\right.\,
$$
Then $d_s$ is indeed a metric on $\MabA$ generating the $H^s$-topology on $\MabA$.
\end{theorem}
Before we turn to the proof of this statement, we quickly provide an elementary lemma comparing (weak) convergences in embedded spaces.
\begin{lemma}\label{lem:weakbddweakconv}
    Let $\mathcal{H}, \mathcal{H}_1$ be Hilbert spaces with $\mathcal{H}_1\hookrightarrow \mathcal{H}$ continuously. If $(v_n)_n$ in $\mathcal{H}_1$ is bounded and (weakly) convergent in $\mathcal{H}$ to some $v\in \mathcal{H}$. Then $v\in \mathcal{H}_1$ and $v_n\rightharpoonup v$ in $\mathcal{H}_1$.
\end{lemma}

\begin{proof}
   As $(v_n)_n$ is bounded in $\mathcal{H}_1$, we find a weakly convergent subsequence with limit $w\in \mathcal{H}_1$. As $\mathcal{H}_1\hookrightarrow \mathcal{H}$ is continuous, it is weakly continuous, and hence this subsequence is weakly convergent in $\mathcal{H}$ to $w$ but also to $v$, by assumption. Hence, $v=w\in \mathcal{H}_1$ and the subsequence principle ensures that $(v_n)_n$ itself is weakly convergent in $\mathcal{H}_1$ to $v$.
\end{proof}

\paragraph{Proof of Theorem \ref{thm:metr}.}
    From the Poincar\'e inequality \eqref{poincare} and the a priori estimate \eqref{a-pr-est}, we get
    \begin{equation}\label{est1}
        \|u_n\|_{\Ld\Omega}+\|\mA\grads u_n\|_{\HmsO}\lesssim_{d,s,\Omega,\beta} \|\grads u_n\|_{\Ld\Rd}\lesssim_{\alpha} \|f_n\|_\HmsO,
    \end{equation}
    and analogous estimate holds for $\mB$ and $v_n$. Hence, we can conclude that $d_s$ is well-defined, i.e., $0\leq d_s(\mA,\mB)<\infty$, for all $\mA,\, \mB\in\MabA$.
    Since symmetry and triangle inequality are trivially satisfied, in order to prove that $d_s$ is a metric, it remains to show that $d_s(\mA, \mB)=0$ implies $\mA=\mB$ a.e. on $\Omega$. (Note that on $\R^d\setminus\Omega$ we have  $\mA=\mB=\mA_0$.)
    
    For an arbitrary $u\in\HnsO$ define $f:=-\divs (\mA\grads u)\in\HmsO$, and denote by $v\in\HnsO$ the unique solution of 
  $$
    \left\{
	\begin{array}{rl}
		-\divs (\mB\grads v)=f   &\rm{in}\, \,\Omega, \\
		v=0 &{\rm in}\, \,\Rd\setminus\Omega.\,\\
	\end{array}
	\right.\,
$$
   Take a subsequence of $(f_{n_k})_{k\in\N}$ such that $f_{n_k}\str f$ in $\HmsO$. Then $u-u_{n_k}$ is the unique solution of  
     $$
    \left\{
	\begin{array}{rl}
		-\divs \bigl(\mA\grads (u-u_{n_k})\bigr)=f-f_{n_k}   &\rm{in}\, \,\Omega, \\
		u-u_{n_k}=0 &{\rm in}\, \,\Rd\setminus\Omega,\,\\
	\end{array}
	\right.\,
$$
and by estimate (\ref{est1}), applied to $u-u_n$, we obtain 
$$
u_{n_k} \xrightarrow{\Ld\Omega} u \quad \text{and} \quad \mA\grads u_{n_k} \xrightarrow{\HmsO} \mA\grads u.
$$
Similarly, 
$$
v_{n_k} \xrightarrow{\Ld\Omega} v \quad \text{and} \quad \mB\grads v_{n_k} \xrightarrow{\HmsO} \mB\grads v.
$$
Since $d_s(\mA,\mB)=0$, it follows that $u_{n_k}=v_{n_k}$ in $\Ld\Omega$ and $\mA\grads u_{n_k}=\mB\grads v_{n_k}$ in $\HmsO$. Therefore, $u=v$ and $\mA\grads u=\mB\grads v$, which further implies
$$
(\mA-\mB)\grads u=0 \quad \,\rm{a.e.\,in }\,\, \Omega. 
$$
Moreover, if we denote by $\tilde\Omega\subseteq \Omega$ the set of all Lebesgue points of $\mA-\mB$, we can conclude that $\tilde\Omega$ is contained in the set of all Lebesgue points of $(\mA-\mB)\grads u$, for any $u\in {\rm C}_c^\infty(\Omega)$. Therefore, $(\mA-\mB)\grads u=0$ holds in $\tilde\Omega$, for any $u\in {\rm C}_c^\infty(\Omega)$.

Let $x_0\in\tilde\Omega$ be arbitrary. By Lemma \ref{lem:grads=lambda}, for any $\lambda\in\Rd$ there exists $u\in {\rm C}_c^\infty(\Omega)$ such that $(\grads u)(x_0)=\lambda.$ Using this we obtain $\mA=\mB$ in $\tilde\Omega.$

It is left to prove that the convergence in this metric is indeed $H^s$-convergence. Let $(\mA_m)_m$ be a sequence in $\MabA$ which $H^s$-converges to $\mA$, and let $u_n,\, u_n^m\in \HnsO$ be the solutions to
$$
-\divs (\mA\grads u_n)=f_n \qquad \hbox{and}
    \qquad -\divs (\mA_m\grads u_n^m)=f_n \,,
$$
respectively. By the definition of $H^s$-convergence, for every $n\in\N$, 
$$
u_n^m \xrightharpoonup{\HnsO} u_n \quad \text{and} \quad 
\mA_m\grads u_n^m \xrightharpoonup{\Ld{\R^d}} \mA\grads u_n, \quad \text{as} \quad m\to\infty.
$$
The above convergences and the compact embedding from Theorem \ref{thm:properties_d=1}\,(c) imply
$$
u_n^m \xrightarrow{\Ld\Omega} u_n \quad \text{and} \quad 
\mA_m\grads u_n^m \xrightarrow{\HmsO} \mA\grads u_n, \quad \text{as} \quad m\to\infty.
$$
This, together with the uniform convergence of the series in the definition of $d_s$, implies $d_s(\mA_m,\mA)\str 0$.

Conversely, let $(\mA_m)_m$ in $\MabA$ and $\mA\in\MabA$ be such that 
$$d_s(\mA_m,\mA)\str 0.$$ 
Let $f\in\HmsO$ and choose a subsequence of $(f_{n_k})$ such that $f_{n_k}\to f$ in $\HmsO$.
Furthermore, let $u,\, u^m,\, u_{n_k},\, u_{n_k}^m\in\HnsO$ denote the solutions to 
\begin{alignat*}{2}
-\divs (\mA\grads u)&=f \,, &\qquad
    -\divs (\mA_m\grads u^m) &=f \,, \\
-\divs (\mA\grads u_{n_k}) &=f_{n_k} \,, &\qquad 
    -\divs (\mA_m\grads u_{n_k}^m)&=f_{n_k} \,,
\end{alignat*}
respectively. Since $\|u_{n_k}-u_{n_k}^m\|_{\Ld\Omega}$ and $\|\mA\grads u_{n_k}-\mA_m\grads u_{n_k}^m\|_{\HmsO}$ appear in the series defining $d_s(\mA,\mA_m)$, we have that for each $n_k$  they converge to 0, i.e., for every $k\in\N$
$$
u_{n_k}^m \xrightarrow{\Ld\Omega} u_{n_k} \quad \text{and} \quad 
\mA_m\grads u_{n_k}^m \xrightarrow{\HmsO} \mA\grads u_{n_k} \quad \text{as} \quad m\to\infty.
$$
Since $(u_{n_k}^m)_m$ and $(\mA_m\grads u_{n_k}^m)_m$ are bounded in $\HnsO$ and $\Ld\Omega$, respectively, by Lemma \ref{lem:weakbddweakconv}, 
\begin{equation}\label{conv-m-n'fixed}
u_{n_k}^m \xrightharpoonup{\HnsO} u_{n_k} \quad \text{and} \quad 
\mA_m\grads u_{n_k}^m \xrightharpoonup{\Ld\Omega} \mA\grads u_{n_k}, \quad \text{as} \quad m\to\infty.
\end{equation}
The first convergence in (\ref{conv-m-n'fixed}) implies $\grads u_{n_k}^m\xrightharpoonup{\Ld{\Rd}}\grads u_{n_k}$, which together with $\mA_m=\mA=\mA_0$ on $\Omega^c=\Rd\setminus\Omega$ gives   $\mA_m\grads u_{n_k}^m \xrightharpoonup{\Ld{\Omega^c}} \mA\grads u_{n_k}$.
Hence, for every $k\in\N$  
\begin{equation}\label{conv-m-n'fixed-Rd} 
\mA_m\grads u_{n_k}^m \xrightharpoonup{\Ld{\Rd}} \mA\grads u_{n_k}, \quad \text{when} \quad m\to\infty.
\end{equation} 
Since $u_{n_k}-u,\, u_{n_k}^m-u^m\in\HnsO$ are solutions of  $-\divs (\mA\grads (u_{n_k}-u))=f_{n_k}-f$ and $-\divs (\mA_m\grads (u_{n_k}^m-u^m))=f_{n_k}-f,$ from estimate (\ref{est1}) and convergence $f_{n_k}\xrightarrow{\HmsO} f$ we have 
\begin{equation}\label{conv-n'}
u_{n_k} \xrightarrow{\HnsO} u \quad \text{and} \quad 
    \mA\grads u_{n_k} \xrightarrow{\Ld{\Rd}} \mA\grads u, \quad \text{as} \quad k\to\infty,
\end{equation}
and, uniformly with respect to $m$,
\begin{equation}\label{conv-n'-mfixed}
u_{n_k}^m \xrightarrow{\HnsO} u^m \quad \text{and} \quad 
    \mA_m\grads u_{n_k}^m \xrightarrow{\Ld{\Rd}} \mA_m\grads u^m, \quad \text{when} \quad k\to\infty.
\end{equation}
Finally, \eqref{conv-m-n'fixed}, \eqref{conv-m-n'fixed-Rd}, \eqref{conv-n'} and \eqref{conv-n'-mfixed} imply that $\mA_m$ $H^s$-converges to $\mA$.
$\hfill\blacksquare$

\section{The one-dimensional case}\label{sec:1d}

The homogenisation in the classical setting, when $s=1$, and one space dimension ($d=1$) is rather simplified as the $H$-limit of a sequence coincides with its harmonic mean. This is mainly due to the fact that in one dimension operators ${\rm div}$ and $\nabla$ coincide, i.e., ${\rm div}=\nabla=\frac{d}{dx}$, and thus, from ${\rm div} F=\frac{dF}{dx}=0$, for $F\in\Ld\Omega$, we conclude $F={\rm const}$.
Indeed, for  $A_n$ in $\Mab$, $\Omega\subseteq\R$ and $f\in{\rm H}^{-1}(\Omega)$,
using the above properties, the one-dimensional, static, diffusion problem
$$
\left\{
	\begin{array}{ll}
		-\frac{d}{dx}\Big(A_n\frac{d}{dx}u_n\Big)=f \quad   &\rm{in}\,\, \Omega, \\
		u_n\in\rm{H}^1_0(\Omega), \\
	\end{array}
	\right.
$$
reduces to 
$$
A_n\frac{d}{dx}u_n=c_n-g(x),
$$
where $g\in\Ld{\Omega}$ is such that $f=\frac{dg}{dx}$ and $(c_n)_n$ is a bounded sequence in $\R$. This further implies strong ${\rm{L}}^2$-convergence of a subsequence $\sigma_n=A_n\frac{d}{dx} u_n$, which is a key step in the deduction that the $H$-limit is the harmonic mean  $\underline{A}$ of $A_n$, i.e., the inverse of the weak-$\ast$ limit of $\frac{1}{A_n}$ (for more details, see \cite[Sec. 1.2.3]{Allaire}).

By Remark \ref{rem:loc-nonloc}, the same result holds for $H^s$-convergence in the one-dimensional case. 
More precisely, for $s\in\langle 0,1\rangle$, let $\Ds$ denote operators $\divs$ and $\grads$ in the one dimensional setting (for $d=1$ they coincide). Then, the corresponding one-dimensional, static, fractional diffusion problem is given by 
\begin{equation}\label{frac-1d}
\left\{
	\begin{array}{rl}
		-\Ds(A_n\Ds u_n)=f \quad   &\rm{in}\,\, \Omega, \\
		u_n=0 \quad &{\rm in}\,\, \R\setminus\Omega, \\
	\end{array}
	\right.
\end{equation}
where  $(A_n)_n$ in $\MabA$. Let $A^s$ denote the $H^s$-limit of the sequence $(A_n)_n$, and let $A$ 
be the $H$-limit of the restricted sequence $(A_n|_{\Omega})_n$ in $\Mab$. Since $H$- and $H^s$-limits coincide a.e.~on $\Omega$, we infer
that $A^s = A = \underline{A}$ a.e.~on $\Omega$. Consequently, the $H^s$-limit $A^s$ is given by
$$A^s(x)=\begin{cases}
    \underline{A}(x), \quad x\in\Omega\\
    A_0(x), \quad x\in\Omega^c \,.
    \end{cases}$$

\begin{remark}
    Although both $H$- and $H^s$-limits (for $d=1$) are equal to $\underline{A}$ on $\Omega$, there are some essential differences between the classical ($s=1$) and the fractional case ($s\in\langle 0,1\rangle$). Specifically, in the fractional case, we are unable to obtain strong ${\rm{L}}^2$-convergence (up to a subsequence) of the fluxes $\sigma_n^s:=A_n\Ds u_n$. We will elaborate on that in more detail.

    For $f\in\HmsO$, Lemma \ref{lem:f-g} implies that there exists $g\in\Ld\R$ such that $f=\Ds g$. Then, problem \eqref{frac-1d} reduces to
    \begin{equation}\label{Ds=0}
    \Ds(A_n\Ds u_n + g)=0 \quad \text{in } \HmsO.
    \end{equation}
    The same is true in the classical ($s=1$) case, just with $\frac{d}{dx}$ instead of $\Ds$. However, in contrast to the classical case, where we can conclude that the expression in the brackets above is a sequence converging to a constant, in the fractional case the relation \eqref{Ds=0} is not sufficient to provide a strong convergence (up to a subsequence) of $(\sigma^s_n)_n$ in $\Ld\R$. More precisely, neither $\Ds F=0$ implies $F=const.$, nor from $\Ds F_n=0$ we can conclude the strong convergence of $F_n$, as we shall see in the next lemma.

\end{remark}

\begin{lemma} Let $\Omega\subseteq\R$ be open and bounded.
    Then there exists a bounded sequence $(F_n)_n$ in $\Ld{\R}\setminus\{0\}$ such that, for every $n\in\N$,  $\Ds F_n=0$ in $\HmsO$, and $(F_n)_n$ does not have a strongly convergent subsequence. Consequently, the operator $\Ds:\Ld{\R}\to\HmsO$ has an infinite-dimensional kernel.
\end{lemma}
\begin{proof}
    Since $\Omega$ is bounded, there exists $M>0$ such that $\Omega\subseteq[-M,M]$. Let $w\in{\rm C}_c^{\infty}(\R\setminus\Omega)$ be such that $\supp w\subseteq \langle M,\infty\rangle$ and $\int_{\R}w(x)\,dx=0$, i.e. $\widehat{w}(0)=0$.
    
    If we define the sequence $w_n(x):=w(x-n)$, $n\in\N$, then $(w_n)_n$ in ${\rm C}_c^{\infty}(\R\setminus\Omega)$
    and $\widehat{w}_n(0)=\widehat{w}(0)=0$, since the Fourier transform of $w_n$ reads
     $\widehat{w}_n(\xi)=e^{-in\xi}\widehat{w}(\xi)$. Moreover, for $n\in\N$ we define $F_n$ by
    $$
    \widehat{F}_n(\xi):=\frac{\widehat{w}_n(\xi)}{i\xi|\xi|^{s-1}}=e^{-in\xi}\widehat{F}(\xi), \quad \widehat{F}(\xi):=\frac{\widehat{w}(\xi)}{i\xi|\xi|^{s-1}}, \quad(\xi\in \R).
    $$
    Let us first prove that $(F_n)_n$ is a bounded sequence in $\Ld{\R}$. 
    Since $|F_n|=|F|$, it is sufficient to prove that $F\in\Ld\R$.
    From $w\in{\rm C}^{\infty}_c(\R\setminus\Omega)$, we have that $\widehat{w}$ is a Schwartz function and, by definition of $w$, $\widehat{w}(0)=0$. This in addition implies
    $\widehat{w}(\xi)=\mathcal{O}(|\xi|)$, when $\xi\to0$, hence there exist $\delta>0$ and $m>0$ such that $|\widehat{w}(\xi)|\leq m|\xi|$, when $0<|\xi|<\delta$. We have
        $$
        \|\widehat{F}\|^2_{\Ld\R}=\int\limits_{\R}\frac{|\widehat{w}(\xi)|^2}{|\xi|^{2s}}\,d\xi=\int\limits_{-\delta}^{\delta}\frac{|\widehat{w}(\xi)|^2}{|\xi|^{2s}}\,d\xi + \int\limits_{\R\setminus(-\delta,\delta)}\frac{|\widehat{w}(\xi)|^2}{|\xi|^{2s}}\,d\xi \,.
        $$
       For the first integral in the above sum we apply the above mentioned estimate of $\widehat{w}$ around the origin: 
        $$
        \int\limits_{-\delta}^{\delta}\frac{|\widehat{w}(\xi)|^2}{|\xi|^{2s}}\,d\xi \leq 2m^2\int\limits_0^{\delta}\xi^{2-2s}\,d\xi= 2m^2\frac{\delta^{3-2s}}{3-2s}<\infty,
        $$
        since $s<1$. Further on, for the second integral, it holds
        $$
        \int\limits_{\R\setminus(-\delta,\delta)}\frac{|\widehat{w}(\xi)|^2}{|\xi|^{2s}}\,d\xi\leq \frac{1}{\delta^{2s}}\int\limits_{\R\setminus (-\delta,\delta)}|\widehat{w}(\xi)|^2\,d\xi\leq \frac{1}{\delta^{2s}}\|\widehat{w}\|^2_{\Ld{\R}} <\infty.
        $$
        By the Plancherel theorem, it follows that $F\in\Ld\R$, implying that $(F_n)$ is a bounded sequence in $\Ld{\R}$, i.e.,
        $\|F_n\|_{\Ld\R}=\|F\|_{\Ld\R}<\infty$.
        
        Let us now prove that $\Ds F_n=0$  in $\HmsO$. Using the Fourier transform property  from Theorem \ref{ft}, we get $\widehat{\Ds F_n}(\xi) = i\xi|\xi|^{s-1}\widehat{F}_n=\widehat{w}_n$ in $\Ld \R$, which implies $\Ds F_n=w_n$ a.e. in $\R$. Now, for every $\varphi\in\HnsO$, since the supports of $\varphi$ and $w_n$ are mutually disjoint, it holds
        $$
        {}_{\HmsO}\langle \Ds F_n,\varphi\rangle_{\HnsO} =\int\limits_{\R} \Ds F_n(x) \varphi(x)\,dx
        =\int\limits_{\R} w_n(x)\varphi(x)\,dx=0.
        $$
        
        It remains to prove that $(F_n)_n$ does not have a strongly convergent subsequence. Notice first that, for any $\varphi\in\Ld\R$, we have that $\widehat{F}\,\overline{\widehat{\varphi}}\in{\rm L}^1(\R)$, and by the Riemann-Lebesgue lemma it holds
        $$
\int\limits_{\R}\widehat{F}_n(\xi)\overline{\widehat{\varphi}}(\xi)\,d\xi=
        \int\limits_{\R}e^{-in\xi}\widehat{F}(\xi)\overline{\widehat{\varphi}}(\xi)\,d\xi=\mathcal{F}\{\widehat{F}\,\overline{\widehat{\varphi}}\}(n) \xrightarrow{n\to\infty} 0,
        $$
        which implies $\widehat{F}_n\xrightharpoonup{\Ld{\R}}0$, $n\to\infty$.

        If $(F_{n_k})_k $ is a subsequence such that, for some $F^\ast\in\Ld\R$,  $F_{n_k}\xrightarrow{\Ld\R}F^*$, then, from $\widehat{F}_n\xrightharpoonup{\Ld{\R}}0$ it follows that $F^*=0$, which is in contradiction with $\|F_n\|_{\Ld\R}=\|F\|_{\Ld\R}\neq0$.
        
     The existence of a bounded sequence $(F_n)_n$ in $\ker(\Ds)$ which does not have a strongly convergent subsequence implies that the unit ball in $\ker(\Ds)$ is not a compact set. Hence, $\ker(\Ds)$ has infinite dimension.  
\end{proof}

It is worth emphasising that the above lemma provides an intrinsic property of the (nonlocal) fractional derivative. It indicates that the operator $\Ds:\Ld{\R}\to\HmsO$ is more complex, compared to the classical derivative $\DD=\frac{d}{dx}:\Ld{\Omega}\to{\rm H}^{-1}(\Omega)$ where we have $\dim(\ker(\DD))=1$. This illustrates why, in the one-dimensional fractional setting, there are some difficulties in deriving strong convergence of the fluxes $\sigma^s_n=A_n\Ds u_n$.

\begin{remark} 
  If we consider $\Ds:\Ld\R\to{\rm H}^{-s}(\R)$, then from the assumptions $F\in\Ld\R$ and $\Ds F=0$ in ${\rm H}^{-s}(\R)$, using the interaction with the Fourier transformation  from Theorem \ref{ft}, we obtain $F=0$ a.e. in $\R$. Hence, when $\Omega=\R$, we have the same conclusion as in the local, $s=1$ case.

\end{remark}

\section{$H^s$-convergence on $\lM(\alpha,\beta;\Rd)$}\label{sec:MabR}

Although in the analysis of fractional diffusion problems we are looking for the solutions in $\HnsO$, the nonlocality of the fractional gradient requires for the coefficients to be defined on the whole space $\Rd$. In \cite{CCM}, the authors proposed a way to overcome this difficulty by considering the class $\MabA$, with coefficients fixed outside of $\Omega$. As we can see from previous results, this approach is effective and, by connecting fractional and classical operators as well as fractional and classical Sobolev spaces, it gives a solid foundation for the development of the homogenisation theory in the fractional setting. 

In this section, we expand this approach by considering the coefficients from $\MabR$, without imposing any additional conditions on $\R^d\setminus\Omega$. 

\begin{remark}\label{rem:intro-decomp}
Here, and in the rest of the paper, we assume that the domain $\Omega\subseteq\R^d$ is open and bounded, and
that the Lebesgue measure of its boundary is equal to zero. This assumption on $\Omega$ is crucial for our arguments, as it guarantees that the space $\Ld{\R^d}$ can be isomorphically decomposed as the direct sum:
$$
\Ld\Omega \oplus \Ld{\Rd\setminus\cl\Omega} \,.
$$
Moreover, we also have that $\cl_{\Ld\Rd}{\rm C}^\infty_c(\Rd\setminus\partial\Omega) = \Ld{\R^d}$, which is in fact equivalent to the previous decomposition. Both identities are readily verified.
The decomposition above permits us to effectively divide the convergence analysis on $\mathbb{R}^d$ into two independent parts: $\Omega$ and $\Rd\setminus\cl\Omega$.

Let us close this remark by noting that we could equivalently require that $\Omega\subseteq\R^d$ is open, bounded and Jordan measurable \cite[Prop. 10.5.1]{Lebl25}.
In particular, the standard assumption that $\Omega$ is a Lipschitz domain implies that its boundary has zero measure.
\end{remark}

A key ingredient for obtaining $H^s$-compactness in $\MabR$ is Theorem \ref{KS-Lema:2.12}.
Roughly speaking, the main idea is to consider separately cases when we are on $\Omega$ and on $\Omega^{\text{c}}$, and to combine obtained results for $\MabA$, strong convergence in the complement \eqref{comp-on-Om^c} and properties of weak-$\ast$ convergence.

\begin{remark}\label{rem:nap-bounds}
    As already mentioned, for $U\subseteq\Rd$ open, the bounds in the definition of $\MabU$ are chosen in such a way that this set is closed under the $H$-limit. Let us verify that $\MabU$ is also closed under the arithmetic and harmonic mean of a sequence, i.e., for a sequence $(\mA_n)_n$ in $\MabU$ we have $\overline{\mA},\underline{\mA}\in\MabU$, where $\overline{\mA}$ is a weak-$\ast$ limit of (a subsequence of) $(\mA_n)$ and $\underline{\mA}$ is the inverse of a weak-$\ast$ limit of (a subsequence of) $\mA_n^{-1}$.
    Since $\mA\in \MabU$ is equivalent to $\mA^{-1}\in \lM\left (\frac{1}{\beta}, \frac{1}{\alpha}; U\right )$, it suffices to show that inequalities \eqref{mA1} and \eqref{mA2} are preserved under the weak-$\ast$ limit.
    
    The expression on the left-hand side of the first inequality (\ref{mA1}) in the definition of $\MabU$ is linear in $\mA$, and thus it is also weakly-$\ast$ continuous, meaning that this inequality is preserved on the weak-$\ast$ limit. 
    
    Regarding the second inequality (\ref{mA2}), the left-hand side is similar as above, while the expression on the right-hand side is slightly more complicated, as it is a composition of linear functions and a quadratic function $u\mapsto u^2$, $\Lb \Rd \str \Lb\Rd$. However, we can use weakly-$\ast$ sequential lower semi-continuity of this function on $\Lb \Rd$ to conclude that this inequality is also preserved in the weak-$\ast$ limit.
\end{remark}

Now we are able to prove $H^s$-compactness for the class $\MabR$, and to give (unique) representation of the $H^s$-limit via $H$-limit on $\Omega$ and weak-$\ast$ limit on $\Rd\setminus\cl \Omega$.
We first establish the uniqueness of the $H^s$-limit.

\begin{proposition}\label{prop:Hs_MabR_unique}
The $H^s$-limit within $\MabR$ is unique. More precisely, 
if a sequence $(\mA_n)_n$ in $\MabR$ $H^s$-converges to both $\mA$ and $\mA_\infty$ (in $\Omega$), where $\mA,\mA_\infty\in\MabR$, then $\mA=\mA_\infty$ a.e.~in $\Rd$.
\end{proposition}

\begin{proof}
Let $(\mA_n)_n$ be a sequence in $\MabR$ such that it 
$H^s$-converges to $\mA$ and $\mA_\infty$ in $\Omega$, where $\mA,\mA_\infty\in\MabR$.

Following the lines of the proof of \cite[Lemma 3.3]{CCM}, where uniqueness of the $H^s$-limit in $\MabA$ is shown, we obtain 
    $$
    \int\limits_{\Rd} \mA\grads\psi \cdot \Phi\,dx = \int\limits_{\Rd} \mA_{\infty}\grads\psi \cdot \Phi\,dx, \quad \text{for every }\psi\in {\rm C}_c^{\infty}(\Omega) \text{ and } \Phi\in\Ld{\Rd;\Rd},
    $$
    which implies
    $$
    \mA(x)\grads\psi(x) = \mA_{\infty}(x)\grads\psi(x), \quad \text{ for a.e. $x\in\Rd$ and for all $\psi\in {\rm C}_c^{\infty}(\Omega)$}.
    $$
This step differs from the proof of \cite[Lemma 3.3]{CCM} only in the use of Lemma \ref{lem:grads=lambda} (a refined version of \cite[Lemma 4.3]{KS}) to obtain
    $$
    \mA(x) = \mA_{\infty}(x), \quad \text{ for a.e. }x\in\Rd.
    $$
This concludes the proof of uniqueness.
\end{proof}

\paragraph{Proof of Theorem \ref{thm:intro-Hs-MabR}.}
Assume that $\mA_n|_\Omega \stackrel{H}{\longrightarrow} \mA|_\Omega$ and  $\mA_n|_{\R^d\setminus \cl\Omega}
\xrightharpoonup{\ * \ }
\mA|_{\R^d\setminus \cl\Omega}$, and let us show:
\begin{equation}\label{H^s-convergence}
  \mA_n \text{ $H^s$-converges to $\mA$ in $\Omega$}.
\end{equation}

 Let $f\in\HmsO$ and denote by $u_n$ the unique weak solution of
 \begin{equation}\label{fp1}
\left\{
	\begin{array}{rl}
		-\divs (\mA_n\grads u_n)=f \quad   &\rm{in}\,\, \Omega, \\
		u_n=0 \quad &{\rm in}\,\, \Rd\setminus\Omega. \\
	\end{array}
	\right.
\end{equation}
Since $(u_n)_n$ and $(\mA_n\grads u_n)_n$ are bounded in $\HnsO$ and $\Ld{\Rd;\Rd}$, respectively, there exist joint subsequences (which we do not relabel), $u\in\HnsO$ and $\sigma\in\Ld{\Rd;\Rd}$ such that
   $$
   u_n\xrightharpoonup{\HnsO}u \quad \text{and} \quad \mA_n\grads u_n \xrightharpoonup{\Ld{\Rd;\Rd}}\sigma, \quad \text{as } n\to\infty.
   $$
   Define
   $$
   \vg_n:=\mA_n\grads u_n-\mA\grads u, \quad n\in\N.
   $$
   It is enough to prove that $(\vg_n)_n$ weakly converges to zero in $\Ld{\Rd;\Rd}$, as then
   $$
   \sigma = \mA\grads u \quad \text{a.e. in } \Rd,
   $$
   which further gives \eqref{H^s-convergence}.
   
   Since $(\vg_n)_n$ is bounded in  $\Ld{\Rd;\Rd}$, we can test the weak convergence with functions from ${\rm C}_c^\infty(\Rd;\Rd)$. Moreover, the assumptions on $\Omega$ and Remark \ref{rem:intro-decomp} imply that it suffices to consider functions $\varphi=\varphi_1 +\varphi_2$, where $\varphi_1\in {\rm C}_c^\infty(\Omega;\Rd)$ and $\varphi_2\in {\rm C}_c^\infty((\cl\Omega)^{\text{c}};\Rd)$. Then, we have
   $$
   \int_\Rd \vg_n\cdot\varphi\,dx=\int_{\Rd} \vg_n\cdot\varphi_1\, dx + \int_{\Rd} \vg_n\cdot\varphi_2\, dx.
   $$
   Since $\supp\varphi_1\subseteq\Omega$, following the arguments given in the proof of \cite[Theorem 3.1]{CCM}, we obtain
   $$
   \lim_{n\to \infty}\int_\Rd \vg_n\cdot\varphi_1\, dx =  \lim_{n\to \infty}\int_{\Omega} \vg_n\cdot\varphi_1\, dx=0.
   $$
   Furthermore, since $\varphi_2\in {\rm C}_c^\infty((\cl\Omega)^{\text{c}};\Rd)$, there exists open and bounded $\Omega_1\subseteq\Rd$ such that $\Omega\Subset\Omega_1$ and $\supp\varphi_2\subseteq(\cl\Omega_1)^{\text{c}}$. Hence, from Theorem \ref{KS-Lema:2.12} and assumption $\mA_n|_{\Rd\setminus\cl\Omega}\xrightharpoonup{\ \ast\ }\mA|_{\Rd\setminus\cl\Omega}$, it follows
   $$
   \mA_n\grads u_n\xrightharpoonup{\Ld{(\cl\Omega_1)^c;\Rd}} \mA\grads u,
   $$
   which further implies $\vg_n\xrightharpoonup{\Ld{(\cl\Omega_1)^c;\Rd}}0$, and we get
   $$
   \lim_{n\to \infty}\int_\Rd \vg_n\cdot\varphi_2\, dx = \lim_{n\to \infty}\int_{(\cl\Omega_1)^c} \vg_n\cdot\varphi_2\, dx =0\,,
   $$
   completing the argument. 
   

Now, assume that $(\mA_n)_n$ $H^s$-converges to $\mA$ in $\Omega$, and define sequences $(\mB_n)_n$ in $\Mab$ and $(\tilde{\mA}_n)_n$ in $\MabRc$ by 
$$
\mB_n:=\mA_n|_{\Omega} \quad \text{and} \quad \tilde{\mA}_n:=\mA_n|_{\Rd\setminus\cl\Omega}.
$$
From compactness of the $H$-convergence (Theorem \ref{thmLocalHisMetrAndComp}), it follows that there exist $\mB\in \Mab$ and a subsequence $\mB_{n'}$, such that $\mB_{n'} \stackrel{H}{\longrightarrow} \mB$. Furthermore, the Banach-Alaoglu theorem implies compactness of the weak-$\ast$ convergence, which, together with closedness of the weak-$\ast$ limit in $\MabRc$ (Remark \ref{rem:nap-bounds}), gives the existence of $\tilde{\mA}\in \MabRc$ and a subsequence $\tilde{\mA}_{n'}$ (chosen jointly with the subsequence of $\mB_{n}$) such that $\tilde{\mA}_{n'}\xrightharpoonup{\ \ast\ }\tilde{\mA}$.
Thus, the above proved statement ($H$-convergence on $\Omega$ and weak-$\ast$ convergence on $\Rd\setminus\cl\Omega$ imply $H^s$-convergence) and the fact that the measure of the boundary of $\Omega$ is zero imply 
$$
\mA_{n'} \quad \stackrel{H^s}{\longrightarrow} \quad  \hat{\mA} :=\begin{cases}
\mB,\quad \text{on }\Omega\\
 \tilde{\mA},\quad \text{on }(\cl\Omega)^c
 \end{cases}.
$$
Now, from the uniqueness of the $H^s$-limit provided by Proposition \ref{prop:Hs_MabR_unique}, we conclude $\mA=\hat{\mA}$ a.e.~on $\Rd$. In particular, $\mA|_{\Omega}=\mB$ and $\mA|_{\Rd\setminus\cl\Omega}=\tilde{\mA}$. Hence, 
\begin{equation}\label{Hs-weak*-conv}
  \mA_{n'}|_{\Omega}=\mB_{n'}\stackrel{H}{\longrightarrow} \mB= \mA|_{\Omega} \quad  \text{and} \quad \mA_{n'}|_{\Rd\setminus\cl\Omega}=\tilde{\mA}_{n'}\xrightharpoonup{\ \ast\ }\tilde{\mA}=\mA|_{\Rd\setminus\cl\Omega}.  
\end{equation}
Furthermore, since both the $H$-limit and the weak-$\ast$ limit do not depend on the chosen subsequences, we conclude that \eqref{Hs-weak*-conv} holds for the whole sequence $\mA_n$.
\end{proof}

From Theorem \ref{thm:intro-Hs-MabR}, the compactness of the $H$-convergence and the weak-$\ast$ convergence, we have that the compactness of $H^s$-convergence on $\MabR$ follows directly. More precisely, we obtain the following result.

\begin{corollary}\label{compactness_result}
For any $(\mA_n)_{n\in\mathbb N}$ in $\MabR$ there exists a subsequence $(\mA_{n_k})_{k\in\mathbb N} $ and $\mA\in \MabR$ such that $(\mA_{n_k})$ $H^s$-converges to $\mA$.
\end{corollary}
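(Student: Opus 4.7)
The plan is to deduce the compactness statement directly from Theorem~\ref{thm:intro-Hs-MabR} by extracting two successive subsequences, one for the local behaviour inside $\Omega$ and one for the weak-$\ast$ behaviour on $\R^d\setminus\cl\Omega$, and then glueing the two limits into a single element of $\MabR$.

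First, I would split the given sequence $(\mA_n)_n$ in $\MabR$ into its restrictions $\mB_n \coloneqq \mA_n|_\Omega$ and $\tilde{\mA}_n \coloneqq \mA_n|_{\R^d\setminus\cl\Omega}$. Since $\mB_n \in \Mab$ for every $n$, Theorem~\ref{thmLocalHisMetrAndComp} yields a subsequence (not relabelled) and some $\mB \in \Mab$ with $\mB_n \xrightarrow{H} \mB$. On the complement, the family $(\tilde{\mA}_n)_n$ is bounded in $L^\infty(\R^d\setminus\cl\Omega;\R^{d\times d})$, hence the Banach--Alaoglu theorem applied along the already chosen subsequence produces a further subsequence (again not relabelled) and some $\tilde{\mA} \in L^\infty(\R^d\setminus\cl\Omega;\R^{d\times d})$ with $\tilde{\mA}_n \xrightharpoonup{*} \tilde{\mA}$. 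By Remark~\ref{rem:nap-bounds}, the pointwise bounds defining $\MabRc$ are preserved under weak-$\ast$ limits, so in fact $\tilde{\mA} \in \MabRc$.

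Next, using that $\partial\Omega$ has Lebesgue measure zero (Remark~\ref{rem:intro-decomp}), I can define $\mA \in \MabR$ unambiguously by
\[
\mA(x) \coloneqq \begin{cases} \mB(x), & x\in\Omega,\\ \tilde{\mA}(x), & x\in\R^d\setminus\cl\Omega, \end{cases}
\]
so that $\mA|_\Omega = \mB$ and $\mA|_{\R^d\setminus\cl\Omega} = \tilde{\mA}$. Then the forward implication of Theorem~\ref{thm:intro-Hs-MabR} (the direction from $H$-convergence on $\Omega$ plus weak-$\ast$ convergence on $\R^d\setminus\cl\Omega$ to $H^s$-convergence) immediately gives $\mA_n \xrightarrow{H^s} \mA$ along the chosen subsequence, which is exactly the desired conclusion.

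There is really no obstacle here beyond bookkeeping: all the substantive work has been carried out in Theorem~\ref{thm:intro-Hs-MabR}, in the classical compactness result of Theorem~\ref{thmLocalHisMetrAndComp}, and in the verification in Remark~\ref{rem:nap-bounds} that the admissibility bounds survive weak-$\ast$ limits. The only mild subtlety worth flagging is that one must pick joint subsequences so that the $H$-convergence on $\Omega$ and the weak-$\ast$ convergence on $\R^d\setminus\cl\Omega$ hold along the same indices, and that the assumption $|\partial\Omega| = 0$ is what allows the two limits to be consistently combined into a single $\mA \in \MabR$.
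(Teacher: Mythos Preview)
Your proposal is correct and mirrors the paper's approach exactly: the paper states that the corollary follows directly from Theorem~\ref{thm:intro-Hs-MabR} together with compactness of $H$-convergence and weak-$\ast$ convergence, and the very extraction you describe (restricting to $\Omega$ and $\R^d\setminus\cl\Omega$, applying Theorem~\ref{thmLocalHisMetrAndComp} and Banach--Alaoglu with Remark~\ref{rem:nap-bounds}, then glueing) is already carried out verbatim inside the proof of Theorem~\ref{thm:intro-Hs-MabR}. There is nothing to add.
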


\section{Properties of $H^s$-topology on $\MabR$}\label{sec:properties}
In this chapter, we present several fundamental properties of $H^s$-convergence on $\MabR$. We begin with a \v Zikov type lemma, a fundamental tool in the homogenisation theory, see \cite[Theorem 5.2]{JKO}, where a similar statement was formulated for symmetric coefficients and $H$-convergence.

\begin{proposition} [\v Zikov type Lemma.] \label{prop:ZikovLemmaFrac}
 Let $f\in\HmsO$ and $\mv\in\Ld{\Rd;\Rd}$. 
 Assume that $(\mA_n)_n$ in $\MabR$ is such that 
 $$
 \mA_n^{\rm T} \ H^s\text{-converges to } \mA^{\rm T}\in\MabR,
 $$
 and denote by $(u_n)_n$ the sequence in $\HnsO$ of solutions to 
\begin{equation}\label{diff-pr-Zhikov}
    -\divs(\mA_n(\grads u_n+\mv))=f.
\end{equation}
Then, there exists a subsequence of $(u_n)_n$, which we do not relabel, such that
$$
u_n \xrightharpoonup{\HnsO}u \quad\quad  \text{and} \quad\quad 
\mA_n(\grads u_n+\mv) \xrightharpoonup{\Ld{\Rd;\Rd}}\mA(\grads u+\mv),
$$
where $u\in\HnsO$ is the unique solution to 
$$-\divs(\mA(\grads u+\mv))=f.$$
\end{proposition}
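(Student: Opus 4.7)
The plan is to extract weakly convergent subsequences from $(u_n)_n$ and from $\sigma_n := \mA_n(\grads u_n + \mv)$, and then to identify the limits with the terms appearing in the limit problem. A priori, testing the weak formulation with $u_n$ and using coercivity of $\mA_n$ (constant $\alpha$), the bound $\|\mA_n\mv\|_{\Ld{\Rd;\Rd}}\le\beta\|\mv\|_{\Ld{\Rd;\Rd}}$, the Cauchy--Schwarz inequality and the Poincar\'e inequality from Theorem~\ref{thm:properties_d=1}(b) yields uniform bounds on $\|u_n\|_\HnsO$ and $\|\sigma_n\|_{\Ld{\Rd;\Rd}}$. Reflexivity therefore provides a common subsequence along which $u_n \rightharpoonup u$ in $\HnsO$ and $\sigma_n \rightharpoonup \sigma^*$ in $\Ld{\Rd;\Rd}$, and passing to the limit in the weak formulation gives $-\divs\sigma^* = f$ in $\HmsO$. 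It then remains to show $\sigma^* = \mA(\grads u + \mv)$ a.e.~in $\Rd$.

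For the identification outside $\cl\Omega$, Theorem~\ref{thm:intro-Hs-MabR} applied to the transpose sequence delivers $\mA_n \rightharpoonup^* \mA$ weakly-$*$ in $L^\infty((\cl\Omega)^c)^{d\times d}$, so $\mA_n\mv \rightharpoonup \mA\mv$ weakly in $\Ld{(\cl\Omega)^c;\Rd}$. Moreover, Theorem~\ref{KS-Lema:2.12} gives $\grads u_n \to \grads u$ strongly in $\Ld{\Omega_1^c;\Rd}$ for every open $\Omega_1 \Supset \Omega$; combined with the weak-$*$ convergence of $\mA_n$, this yields $\mA_n\grads u_n \rightharpoonup \mA\grads u$ weakly in $\Ld{\Omega_1^c;\Rd}$. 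Since every point of $(\cl\Omega)^c$ lies in some $\Omega_1^c$ of this form, we conclude $\sigma^* = \mA(\grads u + \mv)$ a.e.~on $(\cl\Omega)^c$.

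Inside $\Omega$ I plan to use a Tartar-type duality argument. For $\psi \in {\rm C}_c^\infty(\Omega)$, set $g := -\divs(\mA^T\grads\psi) \in \HmsO$ and let $w_n \in \HnsO$ solve $-\divs(\mA_n^T\grads w_n) = g$. Since $\mA_n^T$ $H^s$-converges to $\mA^T$ and the limit equation $-\divs(\mA^T\grads w) = g$ admits $\psi$ itself as its unique solution, one gets $w_n \rightharpoonup \psi$ in $\HnsO$ and $\mA_n^T\grads w_n \rightharpoonup \mA^T\grads\psi$ in $\Ld{\Rd;\Rd}$. The fractional compensated-compactness result (Proposition~\ref{prop:div-rot}) then applies: first to $(\sigma_n,w_n)$, since $\divs\sigma_n = -f$ is constant in $\HmsO$, yielding $\sigma_n\cdot\grads w_n \rightharpoonup \sigma^*\cdot\grads\psi$ in $\lD'(\Omega)$; and second to $(\mA_n^T\grads w_n, u_n)$, yielding $\grads u_n \cdot \mA_n^T\grads w_n \rightharpoonup \grads u\cdot\mA^T\grads\psi$ in $\lD'(\Omega)$. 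The main technical point is the extra $\mv$-term: decomposing $\sigma_n\cdot\grads w_n = \grads u_n\cdot\mA_n^T\grads w_n + \mv\cdot\mA_n^T\grads w_n$, the last summand is outside the scope of the div-curl lemma and must be handled by the elementary weak--strong product $\mv\cdot\mA_n^T\grads w_n \rightharpoonup \mv\cdot\mA^T\grads\psi$ in $\lD'(\Omega)$, using that $\mv\in\Ld{\Rd;\Rd}$ is fixed. Combining the three limits gives $(\sigma^* - \mA(\grads u + \mv))\cdot\grads\psi = 0$ in $\lD'(\Omega)$ for every $\psi\in{\rm C}_c^\infty(\Omega)$, and a Lebesgue-point argument via Lemma~\ref{lem:grads=lambda} (analogous to the one used in the uniqueness proofs of Proposition~\ref{prop:Hs_MabR_unique} and Theorem~\ref{thm:metr}) then yields $\sigma^* = \mA(\grads u + \mv)$ a.e.~in $\Omega$, completing the identification.
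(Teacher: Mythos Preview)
Your proposal is correct and follows essentially the same approach as the paper's proof: a priori bounds and extraction of weak limits, identification of the limit flux on $(\cl\Omega)^c$ via Theorem~\ref{thm:intro-Hs-MabR} combined with Theorem~\ref{KS-Lema:2.12}, and identification on $\Omega$ by the Tartar duality argument using the fractional compensated compactness (Proposition~\ref{prop:div-rot}) together with Lemma~\ref{lem:grads=lambda}. The only cosmetic difference is that the paper takes a general $w_0\in\HnsO$ for the auxiliary adjoint problem whereas you take $\psi\in{\rm C}_c^\infty(\Omega)$; this is immaterial since the final Lebesgue-point step only needs smooth test functions.
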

\begin{proof}
    By denoting $\tilde{f}_n=f+\divs(\mA_n\mv)\in\HmsO$, the sequence of problems \eqref{diff-pr-Zhikov} reduces to 
    $$
    -\divs(\mA_n\grads u_n)=\tilde{f}_n.
    $$
    For every $n\in\N$, the above problem is well-posed and has a unique solution $u_n\in\HnsO$. Furthermore, since $\mA_n\in\MabR$, $n\in\N$, and since $f$ and $\mv$ are fixed, we have that $(\tilde{f}_n)_n$ is a bounded sequence in $\HmsO$. Hence, from the Poincar\'{e} inequality \eqref{poincare} and the a priori estimate \eqref{a-pr-est}, it follows that $(u_n)_n$ is bounded in $\HnsO$ and $(\mA_n(\grads u_n+\mv))_n$ is bounded in $\Ld{\Rd;\Rd}$. Let $u\in\HnsO$ and $\mq\in\Ld{\Rd;\Rd}$ be such that (by passing to subsequences) we obtain 
    $$
    u_n\xrightharpoonup{\HnsO}u \qquad \text{and} \qquad
 \mA_n(\grads u_n+\mv)\xrightharpoonup{\Ld{\Rd;\Rd}}\mq.
    $$
    By proving that $\mq=\mA(\grads u+\mv)$, a.e. on $\Rd$, we get the desired result. Having in mind Remark \ref{rem:intro-decomp} and Theorem \ref{thm:intro-Hs-MabR}, we shall consider separately the cases $x\in\Omega$ and $x\in(\cl\Omega)^{\text{c}}$. 
    
    First, let $\varphi\in{\rm C}^{\infty}_c((\cl\Omega)^{\text{c}};\Rd)$ and let $\Omega_1\subseteq\Rd$ be an open and bounded set such that $\Omega\Subset\Omega_1$ and $\supp\varphi\subseteq(\cl\Omega_1)^{\text{c}}$. 
    From the $H^s$-convergence of $\mA_n^{\rm T}$ to $\mA^{\rm T}$, by Theorem \ref{thm:intro-Hs-MabR} it follows that $\mA_n^{\rm T}\xrightharpoonup{*}\mA^{\rm T}$ on $(\cl\Omega)^{\text{c}}$. This, together with $u_n\xrightharpoonup{\HnsO}u$ and Theorem \ref{KS-Lema:2.12}, implies
    \begin{align*}
    \lim\limits_{n\to\infty}\int\limits_{(\cl\Omega)^{\text{c}}}\mA_n(\grads u_n+\mv)\cdot\varphi\,dx &= \lim\limits_{n\to\infty}\int\limits_{(\cl\Omega_1)^{\text{c}}}(\grads u_n+\mv)\cdot\mA_n^{\rm T}\varphi\,dx \\
    &= \int\limits_{(\cl\Omega_1)^{\text{c}}}(\grads u+\mv)\cdot\mA^{\rm T}\varphi\,dx\\
    &= \int\limits_{(\cl\Omega)^{\text{c}}}\mA(\grads u+\mv)\cdot\varphi\,dx \,. 
    \end{align*} 
    Since $\varphi\in{\rm C}^{\infty}_c((\cl\Omega)^{\text{c}};\Rd)$ was arbitrary, we conclude $\mq=\mA(\grads u+\mv)$ a.e. on $(\cl\Omega)^{c}$.

    It remains to prove $\mq=\mA(\grads u+\mv)$ a.e. on $\Omega$. Let $w_0\in\HnsO$ and define $g\in\HmsO$ by $g:=-\divs(\mA^{\rm T}\grads w_0)$. We take $(w_n)_n$ in $\HnsO$ to be the sequence of solutions to
    $$
    -\divs(\mA^{\rm T}_n\grads w_n) = g.
    $$
    Then, from the $H^s$-convergence of $\mA_n^{\rm T}$ to $\mA^{\rm T}$ and the choice of $g$, by Theorem \ref{thm:intro-Hs-MabR} we get that
    $$
    w_n\xrightharpoonup{\HnsO}w_0 \qquad \text{and} \qquad \mA_n^{\rm T}\grads w_n \xrightharpoonup{\Ld{\Rd;\Rd}} \mA^{\rm T}\grads w_0.
    $$
    Now, let $\varphi\in {\rm C}^{\infty}_c(\Omega)$. We have
    $$
    \int\limits_{\Rd}\varphi\mA_n(\grads u_n+\mv)\cdot\grads w_n\,dx = 
    \int\limits_{\Rd}\varphi(\grads u_n+\mv)\cdot \mA_n^{\rm T}\grads w_n\,dx,
    $$
    and by letting $n\to\infty$, and applying the fractional compensated compactness result, on the left-hand side we get
    $$
    \lim\limits_{n\to\infty}\int\limits_{\Rd}\varphi\mA_n(\grads u_n+\mv)\cdot\grads w_n\,dx=\int\limits_{\Rd}\varphi\,\mq \cdot\grads w_0\,dx,
    $$
    while on the right-hand side
    \begin{align*}
        \lim\limits_{n\to\infty}\int\limits_{\Rd}\varphi(\grads u_n+\mv)\cdot \mA_n^{\rm T}\grads w_n\,dx &=  \lim\limits_{n\to\infty}\int\limits_{\Rd}\varphi\grads u_n\cdot \mA_n^{\rm T}\grads w_n\,dx + \lim\limits_{n\to\infty}\int\limits_{\Rd}\varphi\,\mv\cdot \mA_n^{\rm T}\grads w_n\,dx\\
        &= \int\limits_{\Rd}\varphi\grads u\cdot \mA^{\rm T}\grads w_0\,dx + \int\limits_{\Rd}\varphi\,\mv\cdot \mA^{\rm T}\grads w_0\,dx\\
        &= \int\limits_{\Rd}\varphi\,\mA(\grads u+\mv)\cdot \grads w_0\,dx.
    \end{align*}
    Hence, for every $\varphi\in{\rm C}_c^{\infty}(\Omega)$, we get
    $$
    \int\limits_{\Rd}\varphi\,\mq \cdot\grads w_0\,dx = \int\limits_{\Rd}\varphi\mA(\grads u+\mv)\cdot \grads w_0\,dx,
    $$
    which implies 
    \begin{equation}\label{qw_0}
    (\mA(\grads u+\mv)\cdot \grads w_0)(x) = (\mq \cdot\grads w_0)(x), \quad \text{for a.e. } x\in\Omega.
    \end{equation}
    Since the set of all Lebesgue points of $(\mA(\grads u+\mv)-\mq)$, denoted by $\tilde{\Omega}$, is contained in the set of all Lebesgue points of $(\mA(\grads u+\mv)-\mq)\cdot\grads w_0$, for any $w_0\in {\rm C}_c^{\infty}(\Omega)$ (and since $w_0\in\HnsO$ was arbitrary), we get that \eqref{qw_0} holds in $\tilde{\Omega}$ for any $w_0\in {\rm C}_c^{\infty}(\Omega)$.
    By taking a suitable $w_0\in {\rm C}_c^{\infty}(\Omega)$ for each $x\in\tilde{\Omega}$ and applying Lemma \ref{lem:grads=lambda}, we conclude
    $$
    \mq (x) = \mA(\grads u+\mv)(x), \quad \text{for a.e. } x \in\Omega.
    $$
\end{proof}

From \v Zikov's lemma, by taking $\mv=0$, we immediately obtain transpose invariance of the $H^s$-limit. More precisely, the following is valid.

\begin{corollary}\label{cor:transpose_cnvg} Let $(\mA_n)_n$ in $\MabR$, and $\mA\in\MabR$. Then
    $\mA_n$  $H^s$-converges to $\mA$ if and only if $\mA_n^{\rm T}$  $H^s$-converges to  $\mA^{\rm T}$.
    
\end{corollary}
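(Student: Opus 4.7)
The plan is to obtain both implications as immediate consequences of Proposition \ref{prop:ZikovLemmaFrac} with the trivial choice $\mv = 0$, exploiting the fact that transposition is an involution on $\MabR$ (indeed, $\mA\in\MabR$ iff $\mA^{\rm T}\in\MabR$, since both conditions \eqref{mA1} and \eqref{mA2} are invariant under $\mA\mapsto \mA^{\rm T}$, the second one because $\mA^{\rm T}\xi\cdot\xi = \mA\xi\cdot\xi$ and $|\mA^{\rm T}\xi|^2 = \mA\mA^{\rm T}\xi\cdot\xi$... actually one verifies directly that $\MabR$ is closed under transposition by the characterisation via the quadratic forms of $\mA$ and $\mA^{-1}$).

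For the first implication, assume $\mA_n^{\rm T}$ $H^s$-converges to $\mA^{\rm T}$. Fix $f\in \HmsO$ and let $u_n\in\HnsO$ be the unique solution of $-\divs(\mA_n \grads u_n)=f$. This is precisely problem \eqref{diff-pr-Zhikov} with $\mv=0$, so Proposition \ref{prop:ZikovLemmaFrac} yields, along a subsequence, $u_n\rightharpoonup u$ in $\HnsO$ and $\mA_n\grads u_n \rightharpoonup \mA\grads u$ in $\Ld{\Rd;\Rd}$, where $u$ solves $-\divs(\mA\grads u)=f$. Since the limit problem has a unique solution, the usual subsequence principle upgrades this to convergence of the full sequence, and comparing with Definition \ref{def:Hsc} shows that $\mA_n$ $H^s$-converges to $\mA$.

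For the reverse implication, I would apply Proposition \ref{prop:ZikovLemmaFrac} to the sequence $\mB_n \coloneqq \mA_n^{\rm T}$ with limit candidate $\mB \coloneqq \mA^{\rm T}$. The hypothesis of that proposition then reads ``$\mB_n^{\rm T} = \mA_n$ $H^s$-converges to $\mB^{\rm T} = \mA$'', which is exactly what we are assuming. Invoking the proposition again with $\mv = 0$ and a fixed $f\in\HmsO$, the solutions $v_n\in\HnsO$ of $-\divs(\mA_n^{\rm T}\grads v_n)=f$ satisfy $v_n\rightharpoonup v$ in $\HnsO$ and $\mA_n^{\rm T}\grads v_n\rightharpoonup \mA^{\rm T}\grads v$ in $\Ld{\Rd;\Rd}$, where $v$ solves $-\divs(\mA^{\rm T}\grads v)=f$. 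Again invoking Definition \ref{def:Hsc}, this means $\mA_n^{\rm T}$ $H^s$-converges to $\mA^{\rm T}$.

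No real obstacle arises: the entire work has been absorbed into Proposition \ref{prop:ZikovLemmaFrac}, whose hypothesis is formulated in terms of the transposed sequence precisely so that this symmetry argument becomes essentially trivial. The only mild point worth verifying explicitly is that $\MabR$ is indeed stable under transposition, so that the roles of $\mA_n$ and $\mA_n^{\rm T}$ can be swapped without leaving the admissible class.
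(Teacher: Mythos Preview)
Your proof is correct and follows exactly the approach indicated in the paper, which states only that ``from \v Zikov's lemma, by taking $\mv=0$, we immediately obtain transpose invariance of the $H^s$-limit.'' Your write-up is a faithful unpacking of that sentence, including the correct observation that the subsequence conclusion of Proposition~\ref{prop:ZikovLemmaFrac} upgrades to full-sequence convergence via uniqueness of the limit problem.
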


Moreover, as a consequence of the fractional compensated compactness result and due to the local nature of weak-$\ast$ convergence, the energy convergence property stated in Proposition \ref{prop:energy-conv}  and the locality property from Proposition \ref{prop:locality} remain to hold for sequences $(\mA_n)_n$ in $\MabR$.  
In addition, there exists a metric on $\MabR$ that generates $H^s$- topology on $\MabR$. 
To be precise, let $d_{\ast}$ be the weak-$\ast$ metric on $\lM(\alpha,\beta;\R^d\setminus \cl\Omega)$, and let $d_H$ be the $H$-metric on $\Mab$. Define $d:\MabR\to[0,\infty\rangle$ such that, for $\mA,\mB\in\MabR$,
\begin{equation}\label{eq:metricForHsRdArbitComplement}
d(\mA,\mB) = d_H(\mA|_{\Omega},\mB|_{\Omega}) + d_{\ast}(\mA|_{\R^d\setminus \cl\Omega},\mB|_{\R^d\setminus \cl\Omega}).
\end{equation}
Since both $d_H$ and $d_{\ast}$ are metrics on $\Mab$ and $\lM(\alpha,\beta;\R^d\setminus \cl\Omega)$, respectively, we get that $d$ is a well-defined metric on $\MabR$. Furthermore, by Proposition \ref{prop:Hs_MabR_unique} and Theorem \ref{thm:intro-Hs-MabR}, we get
$$
\mA_n\ H^s\text{-converges  to } \mA_{\infty} \quad \iff \quad  d(\mA_n,\mA_{\infty}) \to 0.
$$

\section{Schur topology}\label{sec:schur}
In this section only, the generic codomain of function spaces will be $\mathbb{C}$. 

Given local coefficients $\mA\in {\rm L}^\infty(U)^{d\times d}$ for a measurable $U\subseteq \R^d$, we can also consider them as a bounded operator $\mA\in L({\rm L}^2(U)^d)$ by mapping $v\in {\rm L}^2(U)^d$
to $\mA(\cdot)v(\cdot)$. Applying, e.g., the Helmholtz decomposition, one can show that ${\rm L}^\infty(U)^{d\times d}\subsetneq L({\rm L}^2(U)^d)$, and that, in general, the local homogenisation approaches fail if possibly nonlocal coefficients, i.e., elements of $L({\rm L}^2(U)^d)$, are considered (cf., e.g.,~\cite{Wau18} for details). In \cite{Wau18}, the following approach to homogenisation of such nonlocal coefficients was introduced and later refined in \cite{Wau25}.

Let $\mathcal{H}$ be a separable Hilbert space (linear in the second argument), $\mathcal{H}_0$ a closed subspace, and $\mathcal{H}_1\coloneqq \mathcal{H}_0^\perp$, i.e., $\mathcal{H}=\mathcal{H}_0\oplus\mathcal{H}_1$. Then, we can identify a bounded operator $a\in L(\mathcal{H})$ as $a=\begin{psmallmatrix}
    a_{00}&a_{01}\\
    a_{10}&a_{11}
\end{psmallmatrix}$, where $a_{ij}\coloneqq \pi_i a\iota_j$ for $i,j\in\{0,1\}$. Here, $\pi_i\colon\mathcal{H}\to\mathcal{H}_i$ denotes the orthogonal projection onto $\mathcal{H}_i$, and $\iota_i\colon\mathcal{H}_i\to\mathcal{H}$ the canonical embedding.
We endow the set
\begin{equation*}
    \mathfrak{M}(\mathcal{H}_0,\mathcal{H}_1)\coloneqq\bigl\{a\in L(\mathcal{H}): a_{00}^{-1}\in L(\mathcal{H}_0)\text{ and } a^{-1}\in L(\mathcal{H})\bigr\}
\end{equation*}
with the inital topology $\tau(\mathcal{H}_0,\mathcal{H}_1)$ with respect to
\begin{align*}
    \Psi_{00}&\colon\mathfrak{M}(\mathcal{H}_0,\mathcal{H}_1)\to L(\mathcal{H}_0);& a&\mapsto a_{00}^{-1}\text{,}\\
        \Psi_{10}&\colon\mathfrak{M}(\mathcal{H}_0,\mathcal{H}_1)\to L(\mathcal{H}_0,\mathcal{H}_1);& a&\mapsto a_{10}a_{00}^{-1}\text{,}\\
            \Psi_{01}&\colon\mathfrak{M}(\mathcal{H}_0,\mathcal{H}_1)\to L(\mathcal{H}_1,\mathcal{H}_0);& a&\mapsto a_{00}^{-1}a_{01}\text{, and}\\
                \Psi_{11}&\colon\mathfrak{M}(\mathcal{H}_0,\mathcal{H}_1)\to L(\mathcal{H}_1);& a&\mapsto a_{11}- a_{10}a_{00}^{-1}a_{01}\text{,}
\end{align*}
where we consider the codomains equipped with the respective weak operator
topology. 
We refer to $\tau(\mathcal{H}_0,\mathcal{H}_1)$ as the \textbf{Schur topology}, and we call convergence with respect to this topology \textbf{nonlocal $H$-convergence}.

Moreover, for $\gamma\coloneqq \begin{psmallmatrix}
    \gamma_{00}&\gamma_{01}\\
    \gamma_{10}&\gamma_{11}
\end{psmallmatrix}\in \R^{2\times 2}_{>0}\text{,}$
we define\footnote{For a Hilbert space $\mathcal{K}$, a bounded linear operator $B\in L(\mathcal{K})$, and $c>0$, we write $\Re B\geq c$ as a short form of
$\Re\langle v,Bv \rangle_{\mathcal{K}}\geq c\lVert v\rVert^2_{\mathcal{K}}$, $ v\in\mathcal{K}$.}
\begin{multline*}
    \mathfrak{M}(\gamma,\mathcal{H}_0,\mathcal{H}_1)\coloneqq\bigl\{a\in \mathfrak{M}(\mathcal{H}_0,\mathcal{H}_1): \Re\Psi_{00}(a)^{-1}\geq \gamma_{00},\; \Re\Psi_{00}(a)\geq 1/\gamma_{11},\\
    \lVert\Psi_{10}(a)\rVert\leq \gamma_{10}, \;\lVert\Psi_{01}(a)\rVert\leq \gamma_{01},\; \Re\Psi_{11}(a)^{-1}\geq 1/\gamma_{11},\;\Re\Psi_{11}(a)\geq \gamma_{00}\bigr\}\text{,}
\end{multline*}
and we endow this set with the trace (or relative) topology of $\tau(\mathcal{H}_0,\mathcal{H}_1)$ (which we will still denote the same way). For these spaces, we obtain the following compactness and metrisability theorem (note that the statement in \cite{Wau18} appears to be less general on the first glance. The proof of the general case, however, can be read off right away without difficulty).

\begin{theorem}[{\cite[Chp.~5]{Wau18}}]\label{thm:CompactAndMetricSchurTopo}
    Let $\gamma\in \R^{2\times 2}_{>0}$. Then,
    $(\mathfrak{M}(\gamma,\mathcal{H}_0,\mathcal{H}_1), \tau(\mathcal{H}_0,\mathcal{H}_1))$ is a compact and metrisable space.
\end{theorem}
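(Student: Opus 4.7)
The plan is to realise $\mathfrak{M}(\gamma,\mathcal{H}_0,\mathcal{H}_1)$ as (homeomorphic to) a closed subset of a compact metrisable product space, via the tuple map
\[
\Phi \coloneqq (\Psi_{00},\Psi_{10},\Psi_{01},\Psi_{11})\colon \mathfrak{M}(\mathcal{H}_0,\mathcal{H}_1) \to L(\mathcal{H}_0) \times L(\mathcal{H}_0,\mathcal{H}_1) \times L(\mathcal{H}_1,\mathcal{H}_0) \times L(\mathcal{H}_1),
\]
where every factor carries the WOT. By construction, $\tau(\mathcal{H}_0,\mathcal{H}_1)$ is the initial topology of $\Phi$, so $\Phi$ is a topological embedding as soon as it is injective, which is immediate from the Schur complement recovery formulas $a_{00}=\Psi_{00}(a)^{-1}$, $a_{10}=\Psi_{10}(a)\,a_{00}$, $a_{01}=a_{00}\,\Psi_{01}(a)$, $a_{11}=\Psi_{11}(a)+\Psi_{10}(a)\,a_{00}\,\Psi_{01}(a)$.

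Next I would show that $\Phi$ maps into a norm-bounded (hence WOT-compact and WOT-metrisable) block of the product. The defining inequalities of $\mathfrak{M}(\gamma,\mathcal{H}_0,\mathcal{H}_1)$ give deterministic norm bounds: $\Re\Psi_{00}(a)^{-1}\ge\gamma_{00}$ yields $\|\Psi_{00}(a)\|\le 1/\gamma_{00}$, $\Re\Psi_{00}(a)\ge 1/\gamma_{11}$ is not needed here; similarly $\|\Psi_{11}(a)\|\le\gamma_{11}$ from $\Re\Psi_{11}(a)^{-1}\ge 1/\gamma_{11}$, while $\|\Psi_{10}(a)\|\le\gamma_{10}$, $\|\Psi_{01}(a)\|\le\gamma_{01}$ by hypothesis. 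Because $\mathcal{H}$ is separable, so are $\mathcal{H}_0,\mathcal{H}_1$, and therefore WOT on any norm-ball of $L(\mathcal{H}_i,\mathcal{H}_j)$ is compact (Banach--Alaoglu via the trace-class predual) and metrisable (countably many defining seminorms). The product $K$ of the four balls is thus a compact metric space.

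The heart of the argument is to prove that $\Phi(\mathfrak{M}(\gamma,\mathcal{H}_0,\mathcal{H}_1))$ is closed in $K$. Suppose $(a^{(n)})_n$ in $\mathfrak{M}(\gamma,\mathcal{H}_0,\mathcal{H}_1)$ satisfies $\Phi(a^{(n)})\to(P_{00},P_{10},P_{01},P_{11})$ in product-WOT. Since $\Re\langle v,\cdot\, v\rangle$ is WOT-continuous, the coercivity bounds $\Re P_{00}\ge 1/\gamma_{11}$ and $\Re P_{11}\ge\gamma_{00}$ are inherited, so Lax--Milgram provides bounded inverses and legitimises
\[
a_{00}\coloneqq P_{00}^{-1},\quad a_{10}\coloneqq P_{10}\,a_{00},\quad a_{01}\coloneqq a_{00}\,P_{01},\quad a_{11}\coloneqq P_{11}+P_{10}\,a_{00}\,P_{01}.
\]
A direct computation verifies $\Phi(a)=(P_{00},P_{10},P_{01},P_{11})$. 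For the remaining constraints on the inverses, I would use the following substitution trick: for $v\in\mathcal{H}_0$, set $u\coloneqq P_{00}^{-1}v$; then, using WOT-convergence, the constraint $\Re\Psi_{00}(a^{(n)})^{-1}\ge\gamma_{00}$, and lower semicontinuity of the norm under weak convergence,
\[
\Re\langle v,a_{00}v\rangle=\Re\langle u,P_{00}u\rangle=\lim_n\Re\langle u,\Psi_{00}(a^{(n)})u\rangle\ge\gamma_{00}\,\liminf_n\|\Psi_{00}(a^{(n)})u\|^2\ge\gamma_{00}\|P_{00}u\|^2=\gamma_{00}\|v\|^2.
\]
The analogous argument, applied to $P_{11}$ using $\Re\Psi_{11}(a^{(n)})\ge\gamma_{00}$, delivers $\Re P_{11}^{-1}\ge 1/\gamma_{11}$; the norm constraints on $P_{10},P_{01}$ pass by WOT lsc of the operator norm. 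Since $a_{00}$ and the Schur complement $a_{11}-a_{10}a_{00}^{-1}a_{01}=P_{11}$ are both boundedly invertible, the standard block-LU factorisation gives $a^{-1}\in L(\mathcal{H})$, hence $a\in\mathfrak{M}(\gamma,\mathcal{H}_0,\mathcal{H}_1)$ and closedness is established.

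\textbf{Main obstacle.} The genuinely delicate step is transporting the \emph{asymmetric} constraints (those requiring bounds on an inverse such as $\Re\Psi_{00}(a)^{-1}\ge\gamma_{00}$) through the WOT limit, since WOT is not multiplicatively continuous and inversion is, in general, not WOT-continuous either. The substitution $v=P_{00}u$ sketched above is the key device that converts such a condition into one that can be read off from WOT-limits combined with lower semicontinuity of the norm. With closedness in hand, $\Phi$ identifies $(\mathfrak{M}(\gamma,\mathcal{H}_0,\mathcal{H}_1),\tau(\mathcal{H}_0,\mathcal{H}_1))$ with a closed subset of the compact metrisable space $K$, and both assertions follow at once.
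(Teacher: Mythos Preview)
The paper does not supply its own proof of this theorem; it is cited from \cite[Chp.~5]{Wau18}, with the parenthetical remark that the slightly more general form stated here ``can be read off right away without difficulty.'' So there is no paper-internal proof to compare against.

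Your argument is essentially correct and is the natural strategy: realise $\tau(\mathcal{H}_0,\mathcal{H}_1)$ as the initial topology of the tuple map $\Phi$, observe that $\Phi$ is injective via the Schur recovery formulas, land in a product of norm-balls (WOT-compact and WOT-metrisable by separability of $\mathcal{H}$), and then prove closedness of the image. The substitution trick $v=P_{00}u$ to transport the inverse-type coercivity bounds through the WOT limit is exactly the right device, and your explanation of why it is the delicate step is accurate.

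There is one minor slip: to obtain $\Re P_{11}^{-1}\ge 1/\gamma_{11}$ you should invoke the hypothesis $\Re\Psi_{11}(a^{(n)})^{-1}\ge 1/\gamma_{11}$, not $\Re\Psi_{11}(a^{(n)})\ge\gamma_{00}$. With the correct hypothesis the same substitution $u=P_{11}^{-1}v$, $w=\Psi_{11}(a^{(n)})u$ gives $\Re\langle u,\Psi_{11}(a^{(n)})u\rangle\ge(1/\gamma_{11})\|\Psi_{11}(a^{(n)})u\|^2$ and WOT lower semicontinuity of the norm finishes the job. The constraint $\Re\Psi_{11}(a^{(n)})\ge\gamma_{00}$ is instead what you use (directly, via WOT-continuity of $\Re\langle v,\cdot\,v\rangle$) to get $\Re P_{11}\ge\gamma_{00}$ and hence the invertibility of $P_{11}$ needed for the substitution. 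With this correction, your proof is complete.
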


In order to compare this result with Theorem~\ref{thmLocalHisMetrAndComp}, we need to provide a specific decomposition of the corresponding $\mathcal{H}={\rm L}^2(\Omega)^d$. Writing $\nabla_0$ for $\nabla$ on ${\rm H}_0^1(\Omega)\subseteq {\rm L}^2(\Omega)$, we infer, by the
classical Poincar\' e inequality, that $\ran(\nabla_0)$ is a closed subset of ${\rm L}^2(\Omega)^d$. Nonlocal $H$-convergence is a generalisation of $H$-convergence from ${\rm L}^\infty(\Omega;\R^{d\times d})$ to $L({\rm L}^2(\Omega)^d)$, i.e., from local to nonlocal coefficients, in the following sense.

\begin{theorem}\label{thm:SchurTopoCoincHTopoLocCoeff}
    Let $0<\alpha\leq \beta$. Then,
    $$
    (\Mab,\tau_H)=(\Mab, \tau(\ran(\nabla_0),\ran(\nabla_0)^\perp))\text{,}
    $$
    where $\tau(\ran(\nabla_0),\ran(\nabla_0)^\perp)$ stands for the corresponding trace topology and $\tau_H$ is from
    Theorem~\ref{thmLocalHisMetrAndComp}.
\end{theorem}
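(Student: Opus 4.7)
The strategy is to show sequential equivalence of the two topologies on $\Mab$; since both are metrisable on $\Mab$ — the $H$-topology by Theorem~\ref{thmLocalHisMetrAndComp}, and the Schur topology as a subspace of the metrisable $(\mathfrak{M}(\gamma,\mathcal{H}_0,\mathcal{H}_1),\tau(\mathcal{H}_0,\mathcal{H}_1))$ of Theorem~\ref{thm:CompactAndMetricSchurTopo} — this is enough to conclude equality. Here I abbreviate $\mathcal{H}_0\coloneqq\ran(\nabla_0)$ and $\mathcal{H}_1\coloneqq\mathcal{H}_0^\perp$. As a routine first step I would verify $\Mab\subseteq\mathfrak{M}(\gamma,\mathcal{H}_0,\mathcal{H}_1)$ for a $\gamma\in\R^{2\times 2}_{>0}$ depending only on $\alpha,\beta$: each $\mA\in\Mab$ acts on $\pLd\Omega^d$ by pointwise multiplication with $\Re\mA\ge\alpha$ and $\|\mA\|\le\beta$, so both $\mA$ and the block $\mA_{00}=\pi_0\mA\iota_0$ are invertible; the needed bounds on $\Psi_{ij}(\mA)$ follow from standard Schur-complement computations together with the identity $\Psi_{11}(\mA)^{-1}=\pi_1\mA^{-1}\iota_1$ and $\Re\mA^{-1}\ge 1/\beta$.

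The central identification underlying both directions is the following. Given $g_0\in\mathcal{H}_0$, write $g_0=\nabla\zeta$ for some $\zeta\in\Hjnl{\Omega}$ and set $f\coloneqq-\dive g_0\in\pHmj{\Omega}$. Since $\{\nabla\phi:\phi\in\Hjnl{\Omega}\}=\mathcal{H}_0$, testing the equation $-\dive(\mA_n\nabla u_n)=f$ against $\nabla\phi$ yields $\pi_0(\mA_n\nabla u_n-g_0)=0$, whence
\[
\nabla u_n=\Psi_{00}(\mA_n)g_0,\qquad \mA_n\nabla u_n=g_0+\Psi_{10}(\mA_n)g_0.
\]
The implication \emph{Schur $\Rightarrow H$} is then immediate: if $\Psi_{00}(\mA_n)\to\Psi_{00}(\mA)$ and $\Psi_{10}(\mA_n)\to\Psi_{10}(\mA)$ in the weak operator topology, these identities deliver $\nabla u_n\rightharpoonup\nabla u$ and $\mA_n\nabla u_n\rightharpoonup\mA\nabla u$ in $\pLd\Omega^d$, matching Definition~\ref{def:Hc}. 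Conversely, assuming $\mA_n\xrightarrow{H}\mA$, the same identities immediately give the WOT convergences $\Psi_{00}(\mA_n)\to\Psi_{00}(\mA)$ and $\Psi_{10}(\mA_n)\to\Psi_{10}(\mA)$ from the weak convergences of $\nabla u_n$ and $\mA_n\nabla u_n$.

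For the remaining two Schur components I would proceed as follows. The convergence $\Psi_{01}(\mA_n)\to\Psi_{01}(\mA)$ follows from the classical transpose invariance $\mA_n^{\rm T}\xrightarrow{H}\mA^{\rm T}$ combined with the algebraic identity $\Psi_{01}(\mA)=\Psi_{10}(\mA^{\rm T})^{\rm T}$, reducing it to the $\Psi_{10}$ case applied to the transposed sequence. The main obstacle, as I see it, is the convergence of the Schur complement $\Psi_{11}(\mA_n)$, since it is not captured by the standard $\Hjnl{\Omega}$-elliptic problem. To overcome this, I would exploit the identity
\[
\Psi_{11}(\mA)h=\mA(\nabla\phi+h),\qquad h\in\mathcal{H}_1,
\]
where $\phi\in\Hjnl{\Omega}$ is the unique weak solution of $-\dive(\mA(\nabla\phi+h))=0$; this follows from the definitions once one observes that $u\coloneqq\nabla\phi$ is the unique $u\in\mathcal{H}_0$ making $\mA(u+h)\in\mathcal{H}_1$, and then $\mA(u+h)=\mA_{11}h-\mA_{10}\mA_{00}^{-1}\mA_{01}h=\Psi_{11}(\mA)h$. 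Applying the classical Zhikov lemma (the $s=1$ counterpart of Proposition~\ref{prop:ZikovLemmaFrac}) to $\mA_n\xrightarrow{H}\mA$ with shift $\mv=h$ and zero forcing then yields $\mA_n(\nabla\phi_n+h)\rightharpoonup\mA(\nabla\phi+h)$ weakly in $\pLd\Omega^d$, which, by closedness of $\mathcal{H}_1$, translates to exactly $\Psi_{11}(\mA_n)h\rightharpoonup\Psi_{11}(\mA)h$ in $\mathcal{H}_1$, finishing the sequential equivalence and hence the proof.
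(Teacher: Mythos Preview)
Your argument is correct. The paper does not spell out a proof of this statement; it refers to \cite{B25} and says the proof is analogous to that of Theorem~\ref{thm:NonlocalTopologieFracGradPlusOrtho}. Comparing your route to that analogous proof, the key ingredient---the \v{Z}ikov lemma---is the same, but the architecture differs. The paper argues only one implication directly (Schur $\Rightarrow$ $H^s$, via Lemma~\ref{lemma:SchurConvEquivLocalPlusDual}\,(a),(b)), and then, rather than proving the converse, shows that $\MabR$ is \emph{closed} in the compact metrisable space $(\mathfrak{M}(\gamma,\mathcal{H}_0,\mathcal{H}_1),\tau(\mathcal{H}_0,\mathcal{H}_1))$; a continuous bijection from a compact space to a Hausdorff space is then automatically a homeomorphism. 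The closedness step is where \v{Z}ikov enters: given $\mA_n\to a$ in the Schur topology, one passes to an $H^s$-convergent subsequence with limit $\mA$, rewrites the dual problem~\eqref{eq:dualSchurFracGradPDEasInnProd} as a shifted problem $p_n=\mA_n(\grads u_n+z)$, and uses \v{Z}ikov to conclude $a=\mA$.

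Your approach is more direct: you establish both sequential implications explicitly, decomposing the Schur convergence into its four components $\Psi_{ij}$. Two points are worth noting. First, your treatment of $\Psi_{01}$ via the transpose identity $\Psi_{01}(\mA)=\Psi_{10}(\mA^{\rm T})^*$ together with classical transpose invariance of $H$-convergence is a clean shortcut; the paper instead reaches $\Psi_{01}$ through condition~(d) of Lemma~\ref{lemma:SchurConvEquivLocalPlusDual} and the dual problem. Second, your identification $\Psi_{11}(\mA)h=\mA(\nabla\phi+h)$ with $\phi$ solving the shifted elliptic problem is exactly the mechanism behind the paper's use of \v{Z}ikov in the closedness argument---you simply invoke it head-on rather than through the compactness detour. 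Either organisation works; yours avoids the abstract compact-to-Hausdorff step at the cost of handling the four $\Psi_{ij}$ separately.
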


For a proof, based on the classical Helmholtz decomposition for $d=3$ and a sufficiently regular $\Omega$, see~\cite[Sec.~1.6]{B25}. For our more general assumptions on $d$ and $\Omega$, the proof works analogously to the following considerations.

Let $\grads_\Omega$ denote $\grads$ on $\HnsO\subseteq {\rm L}^2(\Omega)$. Then, by the fractional Poincar\' e inequality (see Theorem~\ref{thm:properties_d=1}), $\ran(\grads_\Omega)$ is a closed subset of ${\rm L}^2(\mathbb{R}^d)^d$.
By~\cite[Thm.~2.9]{Wau18} (see~\cite{TW14} for details), for $\gamma\in \R^{2\times 2}_{>0}$, $a\in\mathfrak{M}(\gamma,\ran(\grads_\Omega),\ran(\grads_\Omega)^\perp)$, and $f\in {\rm H}^{-s}(\Omega)$, 
\begin{equation}\label{eq:SchurFracGradPDEasInnProd}
(\forall v\in \HnsO) \quad \langle a\grads_\Omega u, \grads_\Omega v\rangle_{{\rm L}^2(\mathbb{R}^d)^d}=f(v)
\end{equation}
is well-posed with a unique solution $u\in \HnsO$.
Furthermore, from the invertibility of the Schur complement of $a$, i.e.,
    $(a^{-1})_{11}^{-1}=a_{11}-a_{10}a^{-1}_{00}a_{01}$, we readily deduce that, for $z\in \ran(\grads_\Omega)^\perp\subseteq {\rm L}^2(\mathbb{R}^d)^d$,
\begin{equation}\label{eq:dualSchurFracGradPDEasInnProd}
\bigl(\forall q\in \ran(\grads_\Omega)^\perp\bigr) \quad \langle a^{-1}p, q\rangle_{{\rm L}^2(\mathbb{R}^d)^d}=\langle z, q\rangle_{{\rm L}^2(\mathbb{R}^d)^d}
\end{equation}
has the unique solution $p=\iota_1(a_{11}-a_{10}a^{-1}_{00}a_{01})z\in\ran(\grads_\Omega)^\perp\subseteq {\rm L}^2(\mathbb{R}^d)^d$.

We can also regard $\tau(\ran(\grads_\Omega),\ran(\grads_\Omega)^\perp)$ for $\mathcal{H}={\rm L}^2(\mathbb{R}^d)^d$, and, in combination with~\eqref{eq:SchurFracGradPDEasInnProd} and~\eqref{eq:dualSchurFracGradPDEasInnProd},
we obtain the following characterisation of convergence with respect to $\tau(\ran(\grads_\Omega),\ran(\grads_\Omega)^\perp)$.
\begin{lemma}\label{lemma:SchurConvEquivLocalPlusDual}
    Let $\gamma\in \R^{2\times 2}_{>0}$. Then, a sequence $(a_n)_n$ from $\mathfrak{M}(\gamma,\ran(\grads_\Omega),\ran(\grads_\Omega)^\perp)$ converges to $a\in\mathfrak{M}(\gamma,\ran(\grads_\Omega),\ran(\grads_\Omega)^\perp)$
    with respect to $\tau(\ran(\grads_\Omega),\ran(\grads_\Omega)^\perp)$ if and only if all of the following conditions hold
    \begin{enumerate}
        \item[(a)] For each $f\in {\rm H}^{-s}(\Omega)$, the sequence $(u_n)_n$ in $\HnsO$ of unique solutions to~\eqref{eq:SchurFracGradPDEasInnProd}, where $a_n$ replaces $a$, weakly converges in $\HnsO$ to the unique solution $u\in\HnsO$ to~\eqref{eq:SchurFracGradPDEasInnProd}, 
        \item[(b)] and $a_n \grads_\Omega u_n\xrightharpoonup{{\rm L}^2(\R^d)^d} a \grads_\Omega u$.
        \item[(c)] For each $z\in \ran(\grads_\Omega)^\perp\subseteq {\rm L}^2(\mathbb{R}^d)^d$, the sequence $(p_n)_n$ in $\ran(\grads_\Omega)^\perp\subseteq {\rm L}^2(\mathbb{R}^d)^d$ of unique solutions to~\eqref{eq:dualSchurFracGradPDEasInnProd}, where $a^{-1}_n$ replaces $a^{-1}$, weakly converges in ${\rm L}^2(\mathbb{R}^d)^d$ to the unique solution $p\in\ran(\grads_\Omega)^\perp\subseteq {\rm L}^2(\mathbb{R}^d)^d$ to~\eqref{eq:dualSchurFracGradPDEasInnProd},
        \item[(d)] and $a_n^{-1} p_n\xrightharpoonup{{\rm L}^2(\R^d)^d} a^{-1} p$.
    \end{enumerate}
\end{lemma}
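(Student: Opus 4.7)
The plan is to recognise that convergence in the initial topology $\tau(\ran(\grads_\Omega),\ran(\grads_\Omega)^\perp)$ is, by construction, equivalent to simultaneous weak-operator-topology (WOT) convergence $\Psi_{ij}(a_n)\to \Psi_{ij}(a)$ for all $i,j\in\{0,1\}$, and then to show that each of the four conditions (a)--(d) translates precisely into one of these four WOT convergences. Throughout I would set $\mathcal{H}_0=\ran(\grads_\Omega)$ and $\mathcal{H}_1=\mathcal{H}_0^\perp$, use that $\grads_\Omega\colon \HnsO\to \mathcal{H}_0$ is a topological isomorphism by the fractional Poincar\'e inequality (Theorem~\ref{thm:properties_d=1}(b)), and observe that every $f\in \HmsO$ corresponds via Riesz representation to a unique $g\in\mathcal{H}_0$ with $f(v)=\langle g,\grads_\Omega v\rangle_{{\rm L}^2(\R^d)^d}$ for all $v\in \HnsO$; as $f$ ranges over $\HmsO$, the element $g$ ranges over all of $\mathcal{H}_0$.

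On the primal side, writing $w_n\coloneqq \grads_\Omega u_n\in\mathcal{H}_0$, equation~\eqref{eq:SchurFracGradPDEasInnProd} becomes $\pi_0 a_n w_n=g$, i.e.\ $a_{n,00}w_n=g$, so that $w_n=\Psi_{00}(a_n)g$. Consequently, condition (a) (weak convergence $u_n\rightharpoonup u$ in $\HnsO$, equivalently $w_n\rightharpoonup w$ in $\mathcal{H}_0$) holding for all $f$ is equivalent to $\Psi_{00}(a_n)g\rightharpoonup \Psi_{00}(a)g$ for every $g\in\mathcal{H}_0$, i.e.\ to $\Psi_{00}(a_n)\to\Psi_{00}(a)$ in WOT. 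Decomposing
\begin{equation*}
a_n w_n=\iota_0 a_{n,00}w_n+\iota_1 a_{n,10}w_n=\iota_0 g+\iota_1 \Psi_{10}(a_n)g\text{,}
\end{equation*}
the fixed summand $\iota_0 g$ drops out of the limit and condition (b) is seen to be equivalent to $\Psi_{10}(a_n)g\rightharpoonup \Psi_{10}(a)g$ in $\mathcal{H}_1$ for every $g\in\mathcal{H}_0$, hence to $\Psi_{10}(a_n)\to \Psi_{10}(a)$ in WOT.

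On the dual side, the classical Schur complement formula for $2\times 2$ block operators yields
\begin{equation*}
(a^{-1})_{11}=\Psi_{11}(a)^{-1},\qquad (a^{-1})_{01}=-\Psi_{01}(a)\Psi_{11}(a)^{-1}\text{.}
\end{equation*}
Applied to~\eqref{eq:dualSchurFracGradPDEasInnProd}, which reads $\pi_1 a_n^{-1}p_n=z$, this produces $p_n=\Psi_{11}(a_n)z$, so as $z$ varies over $\mathcal{H}_1$ condition (c) becomes $\Psi_{11}(a_n)\to \Psi_{11}(a)$ in WOT. The same block formula then gives
\begin{equation*}
a_n^{-1}p_n=\iota_0\pi_0 a_n^{-1}p_n+\iota_1\pi_1 a_n^{-1}p_n=-\iota_0\Psi_{01}(a_n)z+\iota_1 z\text{,}
\end{equation*}
so, modulo the fixed summand $\iota_1 z$, condition (d) is exactly $\Psi_{01}(a_n)z\rightharpoonup \Psi_{01}(a)z$ in $\mathcal{H}_0$ for every $z\in\mathcal{H}_1$, i.e.\ $\Psi_{01}(a_n)\to \Psi_{01}(a)$ in WOT. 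Assembling the four equivalences yields the lemma in both directions.

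The main obstacle is the $2\times 2$ block bookkeeping -- keeping the inclusions $\iota_i$ and projections $\pi_j$ straight and applying the Schur complement identities correctly -- together with verifying that the quantifier ``for every $f\in\HmsO$'' (respectively ``for every $z\in\mathcal{H}_1$'') in (a)--(d) is exactly the quantifier implicit in WOT convergence. Both rest on the bijectivity of the parametrising maps $\HmsO\ni f\mapsto g\in\mathcal{H}_0$ and of the identity on $\mathcal{H}_1$, and are routine once the identifications above are in place.
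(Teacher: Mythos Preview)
Your proposal is correct and follows essentially the same approach as the paper: both reduce the equivalence to matching each of (a)--(d) with WOT convergence of the corresponding $\Psi_{ij}$, using the solution formulas $w_n=\Psi_{00}(a_n)g$, $a_nw_n=\iota_0g+\iota_1\Psi_{10}(a_n)g$, $p_n=\Psi_{11}(a_n)z$, and $a_n^{-1}p_n=\iota_1z-\iota_0\Psi_{01}(a_n)z$. The paper's proof is terser---it defers the primal side to \cite[Thm.~4.10]{Wau18} and states only the final identity $\iota_0a_{00}^{-1}a_{01}z=\iota_1z-a^{-1}p$ for (d)---whereas you spell out the Schur-complement bookkeeping explicitly, but the underlying argument is identical.
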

\begin{proof}
This proof works analogously to the proof of~\cite[Thm.~4.1]{Wau18}.

    The equivalence of (a) and (b) on the one hand and $\Psi_{00}(a_n)\to \Psi_{00}(a)$ and $\Psi_{10}(a_n)\to \Psi_{10}(a)$ in the weak operator topology on the other hand follow exactly like~\cite[Thm.~4.10~(a) and~(b)]{Wau18}.

    Next, due to the solution formula for~\eqref{eq:dualSchurFracGradPDEasInnProd}, (c) and $\Psi_{11}(a_n)\to \Psi_{11}(a)$ in the weak operator topology are clearly equivalent.

    Finally, a tedious but straightforward calculation shows $\iota_0a^{-1}_{00}a_{01}z=\iota_1 z-a^{-1}p$
    which immediately yields the equivalence of (d) and
    $\Psi_{01}(a_n)\to \Psi_{01}(a)$ in the weak operator topology.
    
\end{proof}

Combining Lemma~\ref{lemma:SchurConvEquivLocalPlusDual}
and Proposition~\ref{prop:ZikovLemmaFrac}, and imitating the proof of~\cite[Thm.~1.6.7]{B25}, we infer the following fractional version of
Theorem~\ref{thm:SchurTopoCoincHTopoLocCoeff}.

\begin{theorem}\label{thm:NonlocalTopologieFracGradPlusOrtho}
    Let $0<\alpha\leq \beta$. Then,
    $$
    (\MabR,\tau_d)=(\MabR, \tau(\ran(\grads_\Omega),\ran(\grads_\Omega)^\perp))\text{,}
    $$
    where $\tau(\ran(\grads_\Omega),\ran(\grads_\Omega)^\perp)$ stands for the corresponding trace topology and $\tau_d$ is the
    topology arising from the metric~\eqref{eq:metricForHsRdArbitComplement}.
\end{theorem}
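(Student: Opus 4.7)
My plan is to use the characterisation from Lemma \ref{lemma:SchurConvEquivLocalPlusDual} together with the \v Zikov type Proposition \ref{prop:ZikovLemmaFrac} and the transpose invariance of Corollary \ref{cor:transpose_cnvg} to identify Schur-convergence with $H^s$-convergence at the sequential level; as both topologies are metrisable (the Schur one by Theorem \ref{thm:CompactAndMetricSchurTopo}, $\tau_d$ by construction), this suffices. Before doing so, I would verify that every $\mA\in\MabR$, viewed as the multiplication operator on $\mathcal{H}={\rm L}^2(\R^d)^d$, actually belongs to $\mathfrak{M}(\gamma,\mathcal{H}_0,\mathcal{H}_1)$ with $\mathcal{H}_0\coloneqq\ran(\grads_\Omega)$, $\mathcal{H}_1\coloneqq\mathcal{H}_0^\perp$, and a suitable $\gamma\in\R^{2\times 2}_{>0}$ depending only on $\alpha,\beta$; the pointwise ellipticity bounds for $\mA$ and $\mA^{-1}$ transfer directly to the required operator-theoretic coercivity, boundedness and invertibility, in particular to the block $a_{00}=\pi_0\mA\iota_0$, which inherits coercivity of $\mA$ on the closed subspace $\mathcal{H}_0$.

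For the implication $\tau_d\Rightarrow\tau(\mathcal{H}_0,\mathcal{H}_1)$, suppose $\mA_n\xrightarrow{H^s}\mA$. Conditions (a) and (b) of Lemma \ref{lemma:SchurConvEquivLocalPlusDual} are exactly the weak formulation of Definition \ref{def:Hsc} and are therefore immediate. For (c) and (d), I would translate the dual equation \eqref{eq:dualSchurFracGradPDEasInnProd} into primal form: a vector $p\in\mathcal{H}_1$ solves it iff $\mA^{-1}p-z\in\mathcal{H}_0$, i.e., $p=\mA(z+\grads_\Omega w)$ for some $w\in\HnsO$ with $-\divs(\mA(\grads w+z))=0$ in $\HmsO$. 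This is precisely the setting of Proposition \ref{prop:ZikovLemmaFrac} with $f=0$ and $\mv=z$, and its hypothesis $\mA_n^{\rm T}\xrightarrow{H^s}\mA^{\rm T}$ is supplied by Corollary \ref{cor:transpose_cnvg}. The \v Zikov lemma then yields $w_n\rightharpoonup w$ in $\HnsO$ and $p_n=\mA_n(\grads w_n+z)\rightharpoonup\mA(\grads w+z)=p$ in $\Ld{\Rd;\Rd}$, which is (c); the identity $\mA_n^{-1}p_n=z+\grads w_n\rightharpoonup z+\grads w=\mA^{-1}p$ yields (d).

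The reverse implication is tautological: conditions (a) and (b) from Lemma \ref{lemma:SchurConvEquivLocalPlusDual} are literally the definition of $H^s$-convergence, hence of $\tau_d$-convergence. The main technical point I anticipate is the faithful translation of the abstract Schur-complement equation \eqref{eq:dualSchurFracGradPDEasInnProd} into the \v Zikov-type diffusion problem, together with the careful bookkeeping of transpose invariance so that Proposition \ref{prop:ZikovLemmaFrac} applies directly to the sequence $(\mA_n)_n$ rather than to its transpose; once this bridge is correctly set up, the remainder of the argument follows mechanically from the tools already assembled in Sections \ref{sec:MabR} and \ref{sec:properties}.
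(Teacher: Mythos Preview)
Your proposal is correct and uses the same key technical ingredients as the paper (the characterisation of Lemma \ref{lemma:SchurConvEquivLocalPlusDual}, the \v Zikov Proposition \ref{prop:ZikovLemmaFrac}, and Corollary \ref{cor:transpose_cnvg}), but the logical organisation differs. The paper argues asymmetrically: it observes that Schur-convergence trivially implies $\tau_d$-convergence via (a) and (b), then---rather than proving the reverse implication directly---invokes compactness of $(\mathfrak{M}(\gamma,\mathcal{H}_0,\mathcal{H}_1),\tau(\mathcal{H}_0,\mathcal{H}_1))$ from Theorem \ref{thm:CompactAndMetricSchurTopo} and reduces the problem to showing that $\MabR$ is \emph{closed} in the ambient Schur space. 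That closedness proof is where the \v Zikov argument appears: given $\mA_n\to a$ in the Schur topology, one extracts an $H^s$-convergent subsequence (Corollary \ref{compactness_result}) with limit $\mA\in\MabR$ and then, via exactly your primal reformulation of \eqref{eq:dualSchurFracGradPDEasInnProd}, verifies (c) and (d) to conclude $a=\mA$.

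Your route is more direct: you prove $\tau_d\Rightarrow\tau(\mathcal{H}_0,\mathcal{H}_1)$ head-on by verifying (c) and (d) for the full sequence, avoiding the compactness detour entirely. This buys a cleaner argument that does not rely on Theorem \ref{thm:CompactAndMetricSchurTopo} for compactness (only for metrisability). One small point to make explicit: Proposition \ref{prop:ZikovLemmaFrac} is stated for a subsequence, so you should remark that the limit equation $-\divs(\mA(\grads w+z))=0$ has a unique solution, whence the full sequence $(w_n)_n$ converges and (c), (d) follow without passing to subsequences.
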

\begin{proof}
    First, it is easy to check that $\MabR$ is a subset of $\mathfrak{M}(\gamma,\ran(\grads_\Omega),\ran(\grads_\Omega)^\perp)$, where $\gamma\coloneqq \begin{psmallmatrix}
        \alpha &\beta/\alpha\\
        \beta/\alpha &\beta
    \end{psmallmatrix}$.
    
    By Lemma~\ref{lemma:SchurConvEquivLocalPlusDual}, a sequence that converges with respect to $\tau(\ran(\grads_\Omega),\ran(\grads_\Omega)^\perp)$ also converges
    with respect to $\tau_d$, i.e., the identity mapping is sequentially continuous. Because of the compactness of
    \begin{equation}\label{eq:ViableSchurSupSpaceForLocalFracH}
    (\mathfrak{M}(\gamma,\ran(\grads_\Omega),\ran(\grads_\Omega)^\perp),\tau(\ran(\grads_\Omega),\ran(\grads_\Omega)^\perp))
    \end{equation}
    (see Theorem~\ref{thm:CompactAndMetricSchurTopo}) and the metrisability of both discussed topologies, it suffices to show that $\MabR$ is a closed subset of~\eqref{eq:ViableSchurSupSpaceForLocalFracH} in order to conclude the statement.

    Let $(\mA_n)_n$ be a sequence in $\MabR$ that converges to $a\in \mathfrak{M}(\gamma,\ran(\grads_\Omega),\ran(\grads_\Omega)^\perp)$ with respect to $\tau(\ran(\grads_\Omega),\ran(\grads_\Omega)^\perp)$. By Corollary~\ref{compactness_result}, for every subsequence of $(\mA_n)_n$, there exist another subsequence (which we do not relabel) $(\mA_n)_n$ and $\mA\in \MabR$ such that $(\mA_n)_n$ $H^s$-converges to $\mA$. Let $z\in \ran(\grads_\Omega)^\perp\subseteq {\rm L}^2(\mathbb{R}^d)^d$, and consider the sequence $(p_n)_n$ in $\ran(\grads_\Omega)^\perp\subseteq {\rm L}^2(\mathbb{R}^d)^d$ of unique solutions to~\eqref{eq:dualSchurFracGradPDEasInnProd}, where $\mA^{-1}_n$ replaces $a^{-1}$, as well as the unique solution $p\in\ran(\grads_\Omega)^\perp\subseteq {\rm L}^2(\mathbb{R}^d)^d$ to~\eqref{eq:dualSchurFracGradPDEasInnProd}, where $\mA^{-1}$ replaces $a^{-1}$. This implies 
    $$
    \mA_n^{-1}p_n-z,\mA^{-1}p-z\in\ran(\grads_\Omega)\subseteq {\rm L}^2(\mathbb{R}^d)^d.
    $$
    Hence, there are $u_n,u\in \HnsO$ such that
    $$
    p_n=\mA_n(\grads_\Omega u_n + z)\text{ and }p=\mA(\grads_\Omega u + z),
    $$
    and consequently
    $$
    (\forall v\in \HnsO) \quad \langle \mA_n(\grads_\Omega u_n+z), \grads_\Omega v\rangle_{{\rm L}^2(\mathbb{R}^d)^d}=\langle \mA(\grads_\Omega u+z), \grads_\Omega v\rangle_{{\rm L}^2(\mathbb{R}^d)^d}.
    $$
    Therefore, Proposition~\ref{prop:ZikovLemmaFrac} and
    Corollary~\ref{cor:transpose_cnvg} yield
    $$
    \mA_n^{-1} p_n=\grads_\Omega u_n + z\xrightharpoonup{{\rm L}^2(\R^d)^d} \grads_\Omega u + z=\mA^{-1} p,
    $$
    as well as
    $$
    p_n=\mA_n(\grads_\Omega u_n + z)\xrightharpoonup{{\rm L}^2(\R^d)^d} \mA(\grads_\Omega u + z)=p,
    $$
    and Lemma~\ref{lemma:SchurConvEquivLocalPlusDual}
    shows $a=\mA$.
\end{proof}
\begin{remark}
    Comparing Theorem~\ref{thm:NonlocalTopologieFracGradPlusOrtho}, Theorem~\ref{thm:SchurTopoCoincHTopoLocCoeff}, and the metric~\eqref{eq:metricForHsRdArbitComplement}, we can characterise $H^s$-convergence on $\MabR$ via two natural Schur topologies. On the one hand, we have $\tau(\ran(\grads_\Omega),\ran(\grads_\Omega)^\perp)$ due to Theorem~\ref{thm:NonlocalTopologieFracGradPlusOrtho}. On the other hand, consider the decomposition $\Ld{\Rd}^d=\Ld{\Omega}^d \oplus \Ld{\Rd\setminus\cl\Omega}^d$.
    Note that the Schur topology for $\mathcal{H}=\{0\}\oplus\mathcal{H}$ exactly is the weak operator topology on $L(\mathcal{H})$. Furthermore, by compactness and factorisation of ${\rm L}^1$-functions into the product of two ${\rm L}^2$-functions, weak-$\ast$ limits on $\lM(\alpha,\beta;U)\subseteq {\rm L}^\infty(U)^{d\times d}$ exactly coincide with weak operator topology limits on $\lM(\alpha,\beta;U) \subseteq L({\rm L}^2(U)^d)$ for a measurable $U\subseteq \R^d$.
    Thus, Theorem~\ref{thm:SchurTopoCoincHTopoLocCoeff} implies that $\tau(\ran(\nabla_0)\oplus\{0\},\ran(\nabla_0)^\perp\oplus\Ld{\Rd\setminus\cl\Omega}^d)$ exactly characterises convergence with respect to the metric~\eqref{eq:metricForHsRdArbitComplement}.
    In other words,
    \begin{multline*}  (\MabR,\tau(\ran(\grads_\Omega),\ran(\grads_\Omega)^\perp))\\
    =(\MabR,\tau(\ran(\nabla_0)\oplus\{0\},\ran(\nabla_0)^\perp\oplus\Ld{\Rd\setminus\cl\Omega}^d)),
    \end{multline*}
    i.e., the corresponding trace topologies coincide.

    How differently these two topologies behave for nonlocal coefficients is an open question.
\end{remark}
We close this section with an application of the Schur topology that yields a homogenisation result for a heat type equation with local coefficients and nonlocal differential operators. Before, however, we come to the corresponding proof, we briefly provide a uniqueness statement for solutions of the fractional heat equation provided there. For this, we put 
\[
\divs_{\Omega,w}\coloneqq -(\grads_\Omega)^{\ast},
\]
i.e., as the ${\rm L}^2$-adjoint of the fractional gradient on $\HnsO$. Then, by \cite[Proposition 9.2.2(b)]{SeTrWa22}, $\divs_{\Omega,w}\subseteq \divs_{\Omega}$ (see \eqref{eq:divOm}).
Furthermore, we can consider its domain ${\rm H}(\divs_{\Omega,w})$ as a Hilbert space endowed with the graph inner product.

For $f\in  {\rm L}^2(0,T;{\rm L}^2(\Omega))$ we call $u$ a \textbf{solution} of the equation 
\[
   \partial_t u -\divs_{\Omega,w}\mA \grads u = f \text{ on }\langle 0,T\rangle\times \Omega;\quad u(0)=0,
\]   
if $u\in {\rm H}^1(0,T;{\rm L}^2(\Omega))$ with $u(0)=0$ and, $u \in {\rm L}^2(0,T;\dom(\divs_{\Omega,w} \mA \grads))$ such that for a.e.~$t\in \langle 0,T\rangle$ we have
\[
   u'(t) - \divs_{\Omega,w} \mA \grads u(t)=f(t),
\]where $\partial_t u(t)=u'(t)$ is the distributional derivative of $u$ with respect to $t$.

\begin{lemma}\label{lem:heatunique} Let $\mA \in \MabR$, $\Omega\subseteq \R^d$ open, $T>0$, $f\in {\rm L}^2(0,T;{\rm L}^2(\Omega))$. Then, there exists at most one $u \in {\rm H}^1(0,T;{\rm L}^2(\Omega))\cap {\rm L}^2(0,T;\dom(\divs_{\Omega,w} \mA \grads))$ such that
\[
   \partial_t u -\divs_{\Omega,w}\mA \grads u = f,\quad u(0)=0.
\]    
\end{lemma}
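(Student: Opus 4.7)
My plan is the standard energy method for parabolic problems, adapted to the nonlocal setting at hand. By linearity, it suffices to show that the only solution corresponding to $f=0$ with $u(0)=0$ is $u=0$. So let $u$ be any such solution; I will test the equation with $u(t)$ itself in ${\rm L}^2(\Omega)$ and exploit the coercivity of $\mA$ together with the adjoint relation $\divs_{\Omega,w}=-(\grads_\Omega)^{\ast}$.

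The first step is to observe that, by the very definition of a solution, $u(t)\in \dom(\divs_{\Omega,w}\mA\grads)$ for a.e.~$t$, which in particular forces $u(t)\in {\rm H}^s_0(\Omega)$ (so that $\grads u(t)\in {\rm L}^2(\R^d)^d$) and $\mA\grads u(t)\in \dom(\divs_{\Omega,w})=\dom((\grads_\Omega)^{\ast})$. Testing the equation $u'(t)=\divs_{\Omega,w}\mA\grads u(t)$ against $u(t)$ in ${\rm L}^2(\Omega)$, the adjoint relation yields
\[
\langle u'(t),u(t)\rangle_{{\rm L}^2(\Omega)} + \langle \mA\grads u(t),\grads u(t)\rangle_{{\rm L}^2(\R^d)^d} = 0,
\]
for a.e.~$t\in\langle 0,T\rangle$. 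Here I use that, for $u(t)\in {\rm H}^s_0(\Omega)$ and $\mA\grads u(t)\in \dom((\grads_\Omega)^{\ast})$, the dual pairing $\langle (\grads_\Omega)^{\ast}\mA\grads u(t),u(t)\rangle_{{\rm L}^2(\Omega)}$ equals $\langle \mA\grads u(t),\grads_\Omega u(t)\rangle_{{\rm L}^2(\R^d)^d}$.

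Next, since $u\in {\rm H}^1(0,T;{\rm L}^2(\Omega))$, the standard chain rule for vector-valued Sobolev functions gives $\frac{d}{dt}\|u(t)\|^2_{{\rm L}^2(\Omega)}=2\langle u'(t),u(t)\rangle_{{\rm L}^2(\Omega)}$ a.e. Combined with the coercivity bound $\langle \mA\grads u(t),\grads u(t)\rangle_{{\rm L}^2(\R^d)^d}\geq \alpha\|\grads u(t)\|^2_{{\rm L}^2(\R^d)^d}\geq 0$ provided by $\mA\in\MabR$, I obtain
\[
\frac{1}{2}\frac{d}{dt}\|u(t)\|^2_{{\rm L}^2(\Omega)} \leq 0
\]
for a.e.~$t\in\langle 0,T\rangle$, so the map $t\mapsto \|u(t)\|^2_{{\rm L}^2(\Omega)}$ is non-increasing. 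Together with the initial condition $u(0)=0$, this forces $u=0$ in ${\rm L}^2(0,T;{\rm L}^2(\Omega))$, which is the claim.

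I do not expect any real obstacle here: the argument is essentially the parabolic textbook proof. The only subtlety worth double-checking is the adjoint identity used in the test step, but this is built into the definition $\divs_{\Omega,w}=-(\grads_\Omega)^{\ast}$ once one knows that $u(t)$ lies in the domain of $\grads_\Omega$ and that $\mA\grads u(t)$ lies in the domain of the adjoint, both of which are automatic from the regularity assumed in the definition of a solution.
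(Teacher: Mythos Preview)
Your argument is correct and follows essentially the same energy method as the paper: test the equation with $u$ itself, use the adjoint identity $\divs_{\Omega,w}=-(\grads_\Omega)^{\ast}$ to move the spatial operator, and invoke the coercivity of $\mA$ to conclude that $\|u(t)\|_{{\rm L}^2(\Omega)}^2$ cannot grow. The only cosmetic difference is that the paper integrates over $(0,t)$ and applies integration by parts in time (and takes real parts, since Section~\ref{sec:schur} works over $\mathbb{C}$), whereas you argue pointwise in $t$ and use the chain rule for $\|u(t)\|^2$; these are equivalent formulations of the same computation.
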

\begin{proof} Note that by the (vector-valued) Sobolev embedding theorem, the pointwise evaluation of $u$ at every fixed time $t$ taking values in ${\rm L}_2(\Omega)$ is well-defined. By linearity, we may assume without restriction that $f=0$. Then, let $0<t<T$. We apply $\langle \cdot, u\rangle_{{\rm L}^2(0,t;{\rm L}^2(\Omega))}$ to the equation satisfied by $u$ and obtain
\begin{align*}
  0&= \Re \langle \partial_t u -\divs_{\Omega,w}\mA \grads u,u\rangle_{{\rm L}^2(0,t;{\rm L}^2(\Omega))}  \\
   & = \Re \big(\langle \partial_t u, u\rangle_{{\rm L}^2(0,t;{\rm L}^2(\Omega))} + \langle \mA \grads u,\grads u\rangle_{{\rm L}^2(0,t;{\rm L}^2(\Omega))}\big) \\
   & \geq \Re \langle \partial_t u, u\rangle_{{\rm L}^2(0,t;{\rm L}^2(\Omega))} \\
   & = \frac{1}{2} \|u(t)\|^2_{{\rm L}^2(\Omega)}-\frac{1}{2}\|u(0)\|^2_{{\rm L}^2(\Omega)} =\frac{1}{2} \|u(t)\|^2_{{\rm L}^2(\Omega)},
\end{align*}
where we used integration by parts in the second to last step and the initial condition in the last step. We infer $u(t)=0$ for all $t\in [0,T\rangle$ and, thus, obtain $u=0$.
\end{proof}

\paragraph{Proof of Theorem \ref{thm:intro-evolution-Hs}.}
In order to prove this theorem, we will employ the theory of evolutionary equations. For a detailed introduction that covers all the concepts used in this proof, see~\cite{SeTrWa22}. Note that uniqueness of solutions of the fractional heat equation has been addressed already in Lemma \ref{lem:heatunique}. Thus, it suffices to employ the existence theorem in the framework of evolutionary equations in order to obtain well-posedness of the equations at hand.

We introduce ${\rm L}_{\nu}^2(\R; \mathcal{K})$ for a fixed $\nu>0$ and any Hilbert space $\mathcal{K}$ as the weighted ${\rm L}^2$-space
$$
\big\{f \mid f\colon\R\to \mathcal{K} \text{ Bochner-meas.\ and } (t\mapsto \exp(-\nu t)\lVert f(t)\rVert_{\mathcal{K}})\in{\rm L}^2 (\R)\big\}\text{.}
$$
On these spaces, we denote by $\partial_{t,\nu}$ the weak derivative with maximal domain.
We write ${\rm H}^1_{\nu}(\R;\mathcal{K})$ if we consider these domains as Hilbert spaces endowed with the graph inner product.
Extending $f$ to ${\rm L}_{\nu}^2(\R; {\rm L}^2(\Omega))$ by $0$, we obtain unique
$$
(\tilde u,\tilde v),(\tilde u_n,\tilde v_n)\in \big({\rm H}^1_{\nu}(\R;{\rm L}^2(\Omega))\cap {\rm L}_{\nu}^2(\R;\HnsO )\big) \times {\rm L}_{\nu}^2(\R; {\rm H}(\divs_{\Omega,w}))
$$
with
$$
\Big(\partial_{t,\nu}\begin{pmatrix}
    1&0\\
    0&0
\end{pmatrix}+\begin{pmatrix}
    0&0\\
    0&\mA_n^{-1}
\end{pmatrix}+\begin{pmatrix}
    0&\divs_{\Omega,w}\\
    \grads_\Omega&0
\end{pmatrix}\Big)\begin{pmatrix}
    \tilde u_n\\\tilde v_n
\end{pmatrix}=\begin{pmatrix}
    f\\0
\end{pmatrix}\text{,}
$$
and
$$
\Big(\partial_{t,\nu}\begin{pmatrix}
    1&0\\
    0&0
\end{pmatrix}+\begin{pmatrix}
    0&0\\
    0&\mA^{-1}
\end{pmatrix}+\begin{pmatrix}
    0&\divs_{\Omega,w}\\
    \grads_\Omega&0
\end{pmatrix}\Big)\begin{pmatrix}
    \tilde u\\\tilde v
\end{pmatrix}=\begin{pmatrix}
    f\\0
\end{pmatrix}\text{,}
$$
i.e., well-posedness in the sense of Picard (see~\cite[Thm.~6.2.1]{SeTrWa22})
and maximal regularity (see~\cite[Thm.~15.2.3]{SeTrWa22}).
In particular, by the Sobolev embedding theorem and causality, $\tilde u_n$ and $\tilde u$ are continuous with $\tilde u_n(t)=\tilde u(t)=0$ for $t\leq 0$.
Moreover, by maximal regularity, there also exists a $\tilde C>0$ with the a priori estimate
$$
\lVert\tilde u_n\rVert_{{\rm H}^1_{\nu}(\R;{\rm L}^2(\Omega))}
+\lVert\tilde u_n\rVert_{{\rm L}_{\nu}^2(\R;\HnsO )}\leq \tilde C\lVert f \rVert_{{\rm L}_{\nu}^2}
\text{ and } \lVert\tilde u\rVert_{{\rm H}^1_{\nu}(\R;{\rm L}^2(\Omega))}
+\lVert\tilde u\rVert_{{\rm L}_{\nu}^2(\R;\HnsO )}\leq \tilde C\lVert f \rVert_{{\rm L}_{\nu}^2}.
$$
By virtue of Theorem~\ref{thm:NonlocalTopologieFracGradPlusOrtho}, and the fractional Poincar\' e inequality (see Theorem~\ref{thm:properties_d=1}), we can also apply~\cite[Thm.~4.1]{BFSW25} and~\cite[Thm.~6.5]{BSW24} in order to obtain $\tilde u_n \rightharpoonup \tilde u$ weakly in ${\rm L}_{\nu}^2(\R;{\rm L}^2(\Omega))$.

Finally, we define $u_n\in {\rm L}^2(0,T;{\rm L}^2(\Omega))$ and $u\in {\rm L}^2(0,T;{\rm L}^2(\Omega))$ to be the restrictions to $[0,T\rangle$ of $\tilde u_n$ and $\tilde u$ respectively. It readily follows that
\begin{align*}
u_n &\in {\rm H}^1(0,T;{\rm L}^2(\Omega))\cap {\rm L}^2(0,T; \dom (\divs_{\Omega,w}\mA_n\grads_\Omega)) \text{ and}\\
u &\in {\rm H}^1(0,T;{\rm L}^2(\Omega))\cap {\rm L}^2(0,T; \dom (\divs_{\Omega,w}\mA\grads_\Omega)),
\end{align*}
that these are solutions to~\eqref{eq:ExampleFracHeatEqAn} and~\eqref{eq:ExampleFracHeatEqLimit} respectively, and that there exists a $C>0$ with
\begin{equation}\label{eq:MaxRegH1Bounds0T}
\begin{gathered}
\lVert u_n\rVert_{{\rm H}^1(0,T;{\rm L}^2(\Omega))}
+\lVert u_n\rVert_{{\rm L}^2(0,T;\HnsO )}\leq C\lVert f \rVert_{{\rm L}_{\nu}^2}
\text{ and}\\ \lVert u\rVert_{{\rm H}^1(0,T;{\rm L}^2(\Omega))}
+\lVert u\rVert_{{\rm L}^2(0,T;\HnsO )}\leq C\lVert f \rVert_{{\rm L}_{\nu}^2}.
\end{gathered}
\end{equation}
Additionally, we infer $u_n \rightharpoonup u$ weakly in ${\rm L}^2(0,T;{\rm L}^2(\Omega))$.
Together with~\eqref{eq:MaxRegH1Bounds0T} and the fractional Rellich--Kondrašov theorem (see Theorem~\ref{thm:properties_d=1}),
the Aubin--Lions lemma implies
$u_n \to u$ in ${\rm L}^2(0,T;{\rm L}^2(\Omega))$.
    $\hfill\blacksquare$

\section{$G^s$-convergence}\label{sec:Gconv}
The concept of $G$-convergence is well established in the literature (see \cite[Section 1.3.2]{Allaire} and \cite[Chapter 6]{Tartar}). It can be regarded as a particular case of $H$-convergence, specifically for sequences of operators with symmetric coefficients. 
In the following, we introduce the concept of $G^s$-convergence and investigate its relationship with $H^s$-convergence, thereby extending the classical framework to a broader class of operators.

For $U\subseteq\Rd$ open, let us denote by $\lMsymU$ the subset of all $\mA\in\MabU$ such that $\mA=\mA^{\rm T}$ a.e.~in $U.$

\begin{definition}[$G^s$-convergence]\label{def:G-s-conv}
Let $(\mA_n)_n$ be a sequence in $\lMsymR$ and $\mA\in\lMsymR$.
Then $(\mA_n)_n$ is said to \textbf{$G^s$-converge} to $\mA$ ($\mA_n\xrightarrow{G^s}\mA$), if for all $f\in\HmsO$
the sequence $(u_n)_n$ of solutions of 
\begin{equation}
\left\{
	\begin{array}{rl}
		-\divs (\mA_n\grads u_n)=f \quad   &\rm{in}\,\, \Omega, \\
		u_n=0 \quad &{\rm in}\,\, \Rd\setminus\Omega \\
	\end{array}
	\right.
\end{equation}    
converges weakly in $\HnsO$ to the solution $u$ of the equation
\begin{equation}\label{fpp}
\left\{
	\begin{array}{rl}
		-\divs (\mA\grads u)=f \quad   &\rm{in}\,\, \Omega, \\
		u=0 \quad &{\rm in}\,\, \Rd\setminus\Omega. \\
	\end{array}
	\right.
\end{equation} 
\end{definition}

We shall prove that $G^s$- and $H^s$-convergence are equivalent on $\lMsymR$. To this end, we require the following additional result.
\begin{lemma}\label{G-conv-aux}
    Let $\mA, \mB\in \Lb{\Rd;\R^{d\times d}}$ be symmetric matrix functions  and $U \subseteq \Rd$ be an open set. Assume that for all $\xi\in\Rd$
    $$
    \mA(x)\xi\cdot\xi\ge 0 \quad \text{and}\quad \mB(x)\xi\cdot\xi\ge 0, \quad\text{ for a.e. }x\in U
    $$
    and 
    \begin{equation}\label{lema4.2}
        \int_\Rd\mA(x)\grads\psi(x)\cdot\grads\psi(x)\, dx=\int_\Rd\mB(x)\grads\psi(x)\cdot\grads\psi(x)\, dx, \quad \psi\in {\rm C}_c^\infty(U) \,.
    \end{equation}
    Then 
    $$
    \mA(x)=\mB(x), \text{ for a.e. } x\in \Rd.
    $$
\end{lemma}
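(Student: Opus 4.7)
The plan is to polarise the quadratic identity \eqref{lema4.2} into a bilinear one and then to show that $\mD:=\mA-\mB$ vanishes at every Lebesgue point, treating separately the cases $x^0\in U$ and $x^0\in(\cl U)^c$. By symmetry of $\mA,\mB$, replacing $\psi$ by $\psi\pm\varphi$ in \eqref{lema4.2} and subtracting yields the polarised identity
\begin{equation*}
\int_\Rd\mD(x)\grads\psi(x)\cdot\grads\varphi(x)\,dx=0, \qquad \psi,\varphi\in{\rm C}_c^\infty(U).
\end{equation*}

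For a Lebesgue point $x^0\in U$ of $\mD$ I would pick $\psi,\varphi\in{\rm C}_c^\infty(B(0,1))$ and set $\psi_{\varepsilon}(x):=\psi((x-x^0)/\varepsilon)$, and $\varphi_\varepsilon$ analogously; for small $\varepsilon>0$ these lie in ${\rm C}_c^\infty(U)$, and the $s$-homogeneity of $\grads$ (see Section \ref{sec:prelim}) together with translation invariance gives $\grads\psi_\varepsilon(x)=\varepsilon^{-s}(\grads\psi)((x-x^0)/\varepsilon)$. Substituting and changing variables $y=(x-x^0)/\varepsilon$ turns the identity into $\int_\Rd\mD(x^0+\varepsilon y)\grads\psi(y)\cdot\grads\varphi(y)\,dy=0$. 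Since $\mD\in{\rm L}^\infty(\Rd;\R^{d\times d})$ and $\grads\psi,\grads\varphi\in{\rm L}^2(\Rd)^d\cap{\rm L}^\infty(\Rd)^d$ (the latter from the decay of the Riesz-type kernel at infinity), a split into $|y|\le R$ and $|y|>R$ together with the Lebesgue-point property $\mD(x^0+\varepsilon\,\cdot)\to\mD(x^0)$ in ${\rm L}^1_{\rm loc}(\Rd)$ lets me pass to the limit $\varepsilon\to 0^+$ and obtain $\int_\Rd\mD(x^0)\grads\psi(y)\cdot\grads\varphi(y)\,dy=0$. By Plancherel and Theorem \ref{ft} this reads $\int_\Rd(M\xi\cdot\xi)|\xi|^{2s-2}\hat\psi(\xi)\overline{\hat\varphi(\xi)}\,d\xi=0$ with $M:=\mD(x^0)$. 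Taking $\psi=\varphi$ of the form $\psi(x)=\eta(x)\cos(N\xi_0\cdot x)$ with $\eta\in{\rm C}_c^\infty(B(0,1))$ real and $N$ large concentrates $|\hat\psi|^2$ near $\pm N\xi_0$, and the leading-order asymptotics $|\zeta+N\xi_0|^{2s-2}\sim N^{2s-2}|\xi_0|^{2s-2}$ and $M(\zeta+N\xi_0)\cdot(\zeta+N\xi_0)\sim N^2\, M\xi_0\cdot\xi_0$ on the support of $|\hat\eta|^2$ then force $M\xi_0\cdot\xi_0=0$ for every $\xi_0\ne 0$, hence $M=0$ by symmetry.

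Once $\mD=0$ a.e.~on $U$ is known, the polarised identity reduces to $\int_{(\cl U)^c}\mD\,\grads\psi\cdot\grads\varphi\,dx=0$ for all $\psi,\varphi\in{\rm C}_c^\infty(U)$. For $\psi\in{\rm C}_c^\infty(U)$ and $x\in(\cl U)^c$, $\grads\psi(x)=\nabla I_{1-s}\psi(x)$ (Theorem \ref{eq:prelim_riesz_grad}) is real analytic in $x$, and by Lemma \ref{lem:grads=lambda} its value at any prescribed $x^0\in(\cl U)^c$ can be chosen arbitrarily by varying $\psi$. Combining this pointwise controllability with a density argument for the range $\{\grads\psi\otimes\grads\varphi|_{(\cl U)^c}:\psi,\varphi\in{\rm C}_c^\infty(U)\}$ inside a suitable subspace of ${\rm L}^2_{\rm loc}((\cl U)^c)^{d\times d}$ should yield $\mD(x^0)\lambda\cdot\mu=0$ for arbitrary $\lambda,\mu\in\Rd$, hence $\mD(x^0)=0$. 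This last step is the main obstacle: because $\grads\psi|_{(\cl U)^c}$ is globally supported and real analytic, test functions cannot simply be cut off to localise near $x^0$, and the rigorous argument will have to exploit the mapping properties of the Riesz potential $I_{1-s}$ on the exterior together with a careful approximate-identity construction built from Lemma \ref{lem:grads=lambda}.
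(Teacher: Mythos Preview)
Your polarisation and the case $x^0\in U$ are correct and coincide with the paper's approach (which cites \cite[Lemma 4.2]{CCM} for this part): rescale, use $s$-homogeneity and translation invariance of $\grads$, change variables, pass to the limit via dominated convergence and the Lebesgue-point property, and finish with Plancherel. The $|y|\le R$ / $|y|>R$ splitting is indeed the right way to combine the Lebesgue-point property with the global integrability of $\grads\psi\cdot\grads\varphi$.

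The genuine gap is the case $x^0\in U^{\rm c}$, and your proposed route there differs from the paper's. You want to use $\mD=0$ a.e.\ on $U$ to reduce the polarised identity to $(\cl U)^{\rm c}$ and then combine Lemma~\ref{lem:grads=lambda} with a density claim for products $\grads\psi\otimes\grads\varphi|_{(\cl U)^{\rm c}}$; the obstacle you identify (these exterior traces are globally supported and real-analytic, so cannot be localised by cut-off) is real, and elevating the pointwise statement of Lemma~\ref{lem:grads=lambda} to the required density is not obvious. The paper instead reuses the \emph{same} blow-up argument for $x_0\in U^{\rm c}$ by introducing an additional shift: for each $\varphi\in{\rm C}_c^\infty(\Rd)$ and $x_0\in U^{\rm c}$ it asserts the existence of $x_1\in\Rd$ and $r_0>0$ with $\varphi_{x_0,x_1,r}:=\varphi(r(\cdot-x_0)-x_1)\in{\rm C}_c^\infty(U)$ for all $r>r_0$; since $\grads\varphi_{x_0,x_1,r}(x)=r^s\grads\varphi(r(x-x_0)-x_1)$, the substitution $z=r(x-x_0)-x_1$ yields $\int_\Rd\mD\bigl(x_0+(z+x_1)/r\bigr)\grads\varphi(z)\cdot\grads\varphi(z)\,dz=0$, and letting $r\to\infty$ gives $\mD(x_0)=0$ exactly as in the interior case. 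So the paper claims a single uniform mechanism for all $x_0\in\Rd$, with the shift $x_1$ accommodating points outside $U$, and then defers the remaining steps back to \cite[Lemma 4.2]{CCM}. Rather than pursuing the exterior density route, you should attempt to verify this shifted-rescaling observation directly.
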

\begin{proof}
The identity $\mA(x)=\mB(x)$ for a.e.~$x\in U$ was established in \cite[Lemma 4.2]{CCM}. To obtain the corresponding equality on $U^\text{c}$, we employ the same methodological approach as in the aforementioned result. More precisely, it is enough to notice that for every 
 $\varphi\in {\rm C}_c^\infty(\Rd)$ and for every $x_0\in U^{\textrm{c}}$ there exist $r_0\in\langle 0,\infty\rangle$ and $x_1\in\Rd$ such that for every $r\in \langle r_0,\infty\rangle$ we have
    $$
    \varphi_{x_0,x_1,r}(x):=\varphi(r(x-x_0)-x_1)\in {\rm C}_c^\infty (U). 
    $$
    Moreover, from $s$-homogeneity and translational invariance of the fractional gradient, we obtain
    $$
    \grads \varphi_{x_0,x_1,r}(x)=r^s\grads\varphi(r(x-x_0)-x_1).
    $$
The statement now follows by applying the same steps as in the proof of \cite[Lemma 4.2]{CCM}.\end{proof}
\begin{proposition}[The equivalence between $H^s$- and $G^s$-convergence]
\label{prop:Gs-Hs} 
Let $(\mA_n)_n $ be a sequence in $ \lMsymR$ and  $\mA \in  \MabR$. Then $(\mA_n)_n$ $H^s$-converges to $\mA$ if and only if $\mA$ is symmetric and $(\mA_n)_n$ $G^s$-converges to $\mA$.
\end{proposition}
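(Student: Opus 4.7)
The plan is to prove the equivalence by handling the two directions separately, with the forward direction essentially immediate from the known properties of $H^s$-convergence, while the reverse direction requires a compactness-plus-uniqueness argument to upgrade the convergence of solutions (which is all that $G^s$-convergence gives) to the convergence of fluxes as well.

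For the direction ``$H^s$ implies $G^s$ with symmetric limit'', the convergence $u_n \rightharpoonup u$ in $\HnsO$ is built into Definition~\ref{def:Hsc}, so $G^s$-convergence is immediate. For the symmetry of $\mA$, I would invoke transpose invariance (Corollary~\ref{cor:transpose_cnvg}): since $\mA_n = \mA_n^{\rm T}$, the sequence $\mA_n$ simultaneously $H^s$-converges to $\mA$ and to $\mA^{\rm T}$, hence by uniqueness of the $H^s$-limit (Proposition~\ref{prop:Hs_MabR_unique}), $\mA = \mA^{\rm T}$ a.e.\ in $\Rd$.

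For the converse, assume $\mA_n \xrightarrow{G^s} \mA$ with $\mA \in \lMsymR$. The goal is to enhance weak convergence of solutions to full $H^s$-convergence. I would argue via the subsequence principle: take an arbitrary subsequence of $(\mA_n)_n$. By the $H^s$-compactness result (Corollary~\ref{compactness_result}), a further subsequence (not relabelled) satisfies $\mA_{n_k} \xrightarrow{H^s} \tilde\mA$ for some $\tilde\mA \in \MabR$, and by the forward direction already established, $\tilde\mA \in \lMsymR$. Let $f\in \HmsO$; denote by $u_{n_k}$, $u$, and $\tilde u$ the solutions to the problem \eqref{fpp} associated to $\mA_{n_k}$, $\mA$, and $\tilde\mA$ respectively. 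Both the original $G^s$-convergence and the $H^s$-convergence (which implies $G^s$-convergence) force $u_{n_k} \rightharpoonup u$ and $u_{n_k}\rightharpoonup \tilde u$ in $\HnsO$, so $u = \tilde u$. Since this holds for every $f\in\HmsO$ and the solution map is a bijection between $\HmsO$ and $\HnsO$, we conclude
\begin{equation*}
\int_{\Rd} \mA \grads u \cdot \grads v\, dx = \int_{\Rd} \tilde\mA \grads u \cdot \grads v\, dx, \quad u,v\in\HnsO.
\end{equation*}

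Specialising to $v = u = \psi \in {\rm C}^\infty_c(\Omega) \subseteq \HnsO$ and using symmetry of both $\mA$ and $\tilde\mA$ together with their coercivity (so both quadratic forms are nonnegative), Lemma~\ref{G-conv-aux} applied with $U=\Omega$ yields $\tilde\mA = \mA$ a.e.\ in $\Rd$. Thus every subsequence of $(\mA_n)_n$ has a further $H^s$-convergent subsequence with the same limit $\mA$; by the subsequence principle applied within the metric space $(\MabR, d)$ from \eqref{eq:metricForHsRdArbitComplement}, the whole sequence $H^s$-converges to $\mA$. The main delicate point is the reduction to the quadratic form identity on ${\rm C}^\infty_c(\Omega)$ and the invocation of Lemma~\ref{G-conv-aux}, which crucially provides $\tilde\mA = \mA$ on all of $\Rd$, not merely on $\Omega$, and thereby closes the argument also on the complement of $\Omega$ where $G^s$-convergence a priori carries no information.
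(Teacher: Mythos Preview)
Your proposal is correct and follows essentially the same approach as the paper's proof: the forward direction via transpose invariance and uniqueness of the $H^s$-limit, and the converse via $H^s$-compactness to extract a subsequential limit $\tilde\mA$ (the paper calls it $\mB$), identification of the two weak solution limits, reduction to the bilinear/quadratic form identity, and application of Lemma~\ref{G-conv-aux} to conclude $\tilde\mA=\mA$ on all of $\Rd$. The only notable difference is that you spell out the subsequence principle in the metric space $(\MabR,d)$ to pass from subsequential to full $H^s$-convergence, whereas the paper leaves this step implicit.
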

\begin{proof}
    If $(\mA_n)_n$ $H^s$-converges to $\mA$, then by Proposition \ref{prop:Hs_MabR_unique} and Corollary \ref{cor:transpose_cnvg}, $\mA$ is symmetric and, therefore, $(\mA_n)_n$ also $G^s$-converges to $\mA$. 

On the other hand, assume that $\mA_n$ $G^s$-converges to $\mA\in\lMsymR$. Then, for every $f\in\HmsO$, the sequence $(u_n)_n$  of solutions of 
    \begin{equation}\label{G-eq}
\left\{
	\begin{array}{rl}
		-\divs (\mA_n\grads u_n)=f \quad   &\rm{in}\,\, \Omega, \\
		u_n=0 \quad &{\rm in}\,\, \Rd\setminus\Omega \\
	\end{array}
	\right.
\end{equation}    
converges weakly in $\HnsO$ to the solution $u$ of the equation
\begin{equation}\label{G:eq_hom}
\left\{
	\begin{array}{rl}
		-\divs (\mA\grads u)=f \quad   &\rm{in}\,\, \Omega, \\
		u=0 \quad &{\rm in}\,\, \Rd\setminus\Omega. \\
	\end{array}
	\right.
\end{equation} 
According to Corollary \ref{compactness_result}, there exists a subsequence of $(\mA_n)_n$ that $H^s$-converges to a limit $\mB$, which is symmetric by virtue of Corollary \ref{cor:transpose_cnvg}. Consequently, the limit function $u$ of the corresponding subsequence of solutions $(u_n)_n$ of \eqref{G-eq} is the unique solution in $H^s_0(\Omega)$ of
\begin{equation}\label{G:eq_hom2}
\left\{
	\begin{array}{rl}
		-\divs (\mB\grads u)=f \quad   &\rm{in}\,\, \Omega, \\
		u=0 \quad &{\rm in}\,\, \Rd\setminus\Omega. \\
	\end{array}
	\right.
\end{equation} 
Subtracting the two homogenised equations (\ref{G:eq_hom}) and (\ref{G:eq_hom2}), we obtain
$$
-\divs(\mA-\mB)\grads u=0\quad \text{in }\Omega,$$
which in a weak formulation reads
$$
\int_\Rd(\mA-\mB)\grads u\cdot\grads v=0, \quad v\in\HnsO.
$$
By the arbitrariness of $f$, this identity also holds for any $u\in\HnsO$. Thus, using Lemma \ref{G-conv-aux} it follows that $\mA=\mB$ a.e.~in $\Rd$, which concludes the proof.
\end{proof}

\begin{remark}
Combining Proposition \ref{prop:Gs-Hs} and Theorem \ref{thm:intro-Hs-MabR} with the well-known relationship between $H$- and $G$-convergence \cite[Proposition 1.3.11]{Allaire}, we obtain a characterisation of $G^s$-convergence in terms of classical local $G$-convergence and weak-$\ast$ convergence:
\begin{equation*}
 \mA_n   \stackrel{G^s}{\longrightarrow}  \mA
\iff  \mA_n|_\Omega \stackrel{G}{\longrightarrow} \mA|_\Omega \ \text{ and } \
\mA_n|_{\R^d\setminus \cl\Omega}
\xrightharpoonup{\ * \ }
\mA|_{\R^d\setminus \cl\Omega} \,.
\end{equation*}
\end{remark}

\section{Conclusion}

In this note, we discussed homogenisation problems in the context of fractional diffusion with general non-periodic highly oscillatory conductivities in divergence form. The spatial derivatives are provided by bounded domain variants of operators given on the whole space by their Fourier symbols. In order to capture non-periodic settings, as it is done classically, the corresponding version of $H$-convergence was used, called $H^s$-convergence here which was introduced in \cite{CCM} under a different name. The main results of the present research characterises $H^s$-convergence in terms of $H$-convergence and convergence in the weak-* topology. Moreover, these results are put into perspective of the notion of the Schur topology (two seemingly different Schur topologies were found that both characterise $H^s$-convergence). As a result, a homogenisation problem for the corresponding fractional heat equation has been solved. The final application provides a perspective to symmetric coefficients and characterises the suitable variant of $G$-convergence by mere energy convergence. 

The results are based on compactness results, Poincar\'e-type inequalities, and a suitable version of \v Zikov’s lemma recently used for the description of local $H$-convergence in terms of nonlocal $H$-convergence, i.e., convergence with respect to an appropriate Schur topology.

In comparison to earlier research, the improvement concerns the incorporation of the one-dimensional case, the possibility of introducing coefficients oscillating on the whole of $\Rd$, the complete description of $H^s$/$G^s$-convergence in terms of known, local topologies, the characterisation in terms of the Schur topology, and their application to a time-dependent partial differential equation.

  \paragraph{\bf Funding statement. }
    This work was supported by the Croatian Science Foundation under the projects IP-2022-10-5181 (HOMeoS) and IP-2022-10-7261 (ADESO).

\printbibliography 
\end{document}